\long\def\symbolfootnote[#1]#2{\begingroup
\def\thefootnote{\fnsymbol{footnote}}\footnote[#1]{#2}\endgroup}
\newtheorem{thm}{Theorem}[section]
\newtheorem{cor}[thm]{Corollary}
\newtheorem{lem}[thm]{Lemma}
\newtheorem{prop}[thm]{Proposition}
\theoremstyle{definition}
\newtheorem{rem}[thm]{Remark}
\newtheorem{question}[thm]{Question}
\newcommand{\norma}{{N}}
\newcommand{\centra}{C}
\newcommand{\la}{\langle}
\newcommand{\ra}{\rangle}
\newcommand{\Aut}{\mathrm{Aut}}
\newcommand{\Fix}{\mathrm{Fix}}
\newcommand{\Out}{\mathrm{Out}}
\newcommand{\Inn}{\mathrm{Inn}}
\newcommand{\Pc}{\mathrm{Pc}}
\newcommand{\pc}{\mathrm{Pc}}
\newcommand{\proj}{\mathrm{proj}}
\newcommand{\Stab}{\mathrm{Stab}}
\newcommand{\Res}{\mathrm{Res}}
\newcommand{\inv}{^{-1}}
\newcommand{\N}{\mathbb{N}}
\newcommand{\Z}{{\mathbb Z}}
\newcommand{\bG}{{\bar{G}}}
\newcommand{\bx}{{\bar{x}}}
\newcommand{\bh}{{\bar{h}}}
\DeclareMathOperator{\rank}{rank}
\newcommand{\gen}[1]{\left\langle#1\right\rangle}
\begin{document}
\title[{On conjugacy separability of some Coxeter groups}]{{On conjugacy separability of some Coxeter groups and parabolic-preserving automorphisms}}

\author[P.-E. Caprace]{Pierre-Emmanuel Caprace}
\address[Pierre-Emmanuel Caprace]{Universit\'e catholique de Louvain, IRMP, Chemin du Cyclotron 2, box L7.01.02, 1348 Louvain-la-Neuve, Belgium}
\email[Pierre-Emmanuel Caprace]{pe.caprace@uclouvain.be}

\author[A. Minasyan]{Ashot Minasyan}
\address[Ashot Minasyan]{School of Mathematics,
University of Southampton, Highfield, Southampton, SO17 1BJ, United Kingdom.}
\email[Ashot Minasyan]{aminasyan@gmail.com}
\thanks{P-E.\,C. was partly supported by FNRS grant F.4520.11 and the European Research Council (grant \#278469). A.\,M. was partially supported by the EPSRC grant EP/H032428/1.}

\keywords{Coxeter group, conjugacy separable, pointwise inner automorphisms}

\subjclass[2010]{{20F55, 20E26, 20E36}}

\begin{abstract} We prove that even Coxeter groups, whose Coxeter diagrams contain no $(4,4,2)$ triangles, are conjugacy separable.
In particular, this applies to all right-angled Coxeter groups or word hyperbolic even Coxeter groups.
{For an arbitrary Coxeter group $W$, we also study the relationship between Coxeter generating sets that give rise to the same collection of parabolic subgroups.
As an application we show that if an automorphism of $W$ preserves the conjugacy class of every
sufficiently short element then it is inner.} We then derive consequences for the outer automorphism groups of Coxeter groups.
\end{abstract}
\maketitle

\section{Introduction}
A group $G$ is said to be \textbf{conjugacy separable} if for any two non-conjugate elements $x,y \in G$ there is a homomorphism from $G$ to a finite group $M$ such that
the images of $x$ and $y$ are not conjugate in $M$. Conjugacy separability  can be restated by saying that each conjugacy class $x^G:=\{gxg^{-1} \mid g \in G\}$
is closed in the profinite topology on $G$. If $G$ is residually finite, this also equivalent to the equality $x^G= x^{\widehat G} \cap G$ in $\widehat G$ for all $x \in G$,
where $\widehat{G}$ denotes the profinite completion of $G$.

Conjugacy separability is a classical notion from Combinatorial Group Theory. Originally it was introduced by Mostowski \cite{Most},  who suggested the first application of this property by proving that
a finitely presented conjugacy separable group has solvable conjugacy problem (see also Malcev's work \cite{Malc-cs}).
Presently the following classes of groups are known to be conjugacy separable:
virtually free groups (Dyer \cite{J-Dyer}), virtually surface groups (Martino \cite{Martino}),
virtually polycyclic groups (Remeslennikov \cite{Rem}; Formanek \cite{Form}),
finitely presented residually free groups (Chagas and Zalesskii \cite{Chag-Zal}), right angled Artin groups (Minasyan \cite{Min-RAAG}), non-uniform arithmetic lattices in
${\rm SL}_2(\mathbb{C})$ (Chagas and Zalesskii \cite{C-Z_bianchi}) .

While conjugacy separability is a natural amplification of residual finiteness, it is usually much harder to establish. One of the difficulties comes from the fact that, in general, conjugacy separability is not stable under passing
to finite index subgroups or overgroups (see \cites{Goryaga,Chag-Zal,M-M}).  In view of this Chagas and Zalesskii call a group $G$ \textbf{hereditarily conjugacy separable} if every finite index subgroup of $G$ is conjugacy separable.
Recent theorems {due to} Haglund and Wise \cites{Haglund-Wise_Special,Haglund-Wise_Cox}, Wise \cites{Wise-qc_hierarchy} and Agol \cite{Agol} show that many naturally occurring groups possess finite index subgroups
that embed into right angled Artin groups as virtual retracts. If $G$ is such a group, then, by the work {of} the second author \cite{Min-RAAG}, $G$ contains a {hereditarily} conjugacy separable subgroup of finite index.

The goal of the present work is to study conjugacy separability and related properties for Coxeter groups.
Recall that a \textbf{Coxeter group} is a group  $W$ given by the presentation
\begin{equation}\label{eq:Cox-def}
W=\langle s_1,\dots,s_n\,\|\, (s_is_j)^{m_{ij}}=1, \,\mbox{ for all } i,j  \rangle,\end{equation}
where $M:=(m_{ij})$
is a symmetric $n \times n$ matrix, whose entries satisfy the following conditions:
$m_{ii}=1$ for every $i=1,\dots,n$, $m_{ij} \in \N \sqcup \{\infty\}$ (if $m_{ij}=\infty$, then it is understood that no relation on the product $s_is_j$ is added in the Coxeter presentation) and $m_{ij} \ge 2$
whenever $1 \le i<j \le n$.
The set $S=\{s_1,\dots,s_n\}$ is called the \textbf{Coxeter generating set} for $W$,  $M$ is called the \textbf{Coxeter matrix}  and $n=|S|$ is called the \textbf{rank} of $W$.

Each Coxeter group $W$ is associated with a {\textbf{{(free)} Coxeter diagram}}, which is a labeled graph whose vertex set is indexed by the generators  $\{ s_1,\dots,s_n\}$ such that
between vertices corresponding to {distinct generators $s_i$ and $s_j$, there is an edge labeled by $m_{ij}$  if and only if
$m_{ij} \neq \infty$.} 
The Coxeter group $W$ is said to be \textbf{even} if all  {non-diagonal} entries in the corresponding Coxeter matrix $M$ are either even integers or $\infty$.
{The group $W$ is \textbf{right-angled} if $m_{ij} \in \{2,\infty\}$ whenever $i \neq j$.}
Coxeter groups have been an object of intensive study for many years. For background and basic properties of Coxeter groups the reader is referred to  \cite{Davis}.

In \cite{Niblo-Reeves} for any given Coxeter group $W$,  Niblo and Reeves construct a CAT(0) cube complex $X$ on which $W$ acts properly by isometries.
A combination of Theorem 1.2 from \cite{Haglund-Wise_Cox} and  Corollary 2.2 from \cite{Min-RAAG} implies that every Coxeter group $W$, which acts cocompactly on its
Niblo-Reeves cube complex, possesses a {hereditarily conjugacy} separable subgroup of
finite index.  Therefore it is natural to ask the following question:

\begin{question}\label{q:1} Is every finitely generated Coxeter group conjugacy separable?
\end{question}

Our first result provides a positive answer to the above question for a large class of even Coxeter groups:

\begin{thm}\label{thm:some_even_Cox-cs} Suppose that $W$ is an even Coxeter group of finite rank such that its Coxeter diagram has no $(4,4,2)$-triangles
(i.e., no subdiagrams of type $\widetilde{B}_2$).

Then $W$ is conjugacy separable.
\end{thm}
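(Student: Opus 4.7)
My plan is to combine two independent ingredients: a finite-index hereditarily conjugacy separable subgroup coming from cubical geometry, and an automorphism-theoretic criterion that lifts conjugacy separability from this subgroup to the whole of $W$ (where the latter is supplied by the parabolic-preserving automorphism part of the paper, announced in the abstract).

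For the first ingredient, as the introduction already emphasises, it suffices to establish that $W$ acts cocompactly on its Niblo--Reeves CAT(0) cube complex, since then Theorem~1.2 of \cite{Haglund-Wise_Cox} and Corollary~2.2 of \cite{Min-RAAG} together yield a hereditarily conjugacy separable subgroup $H$ of finite index in $W$. So I would first verify cocompactness using the hypothesis. In an even Coxeter group, the walls of the Niblo--Reeves complex are naturally indexed by conjugacy classes of reflections, and cocompactness boils down to finiteness of certain systems of ``orthogonal'' walls. Rank $3$ parabolic subgroups are the first place where this can fail, and $\widetilde{B}_2$ is precisely the even affine rank $3$ Coxeter group, which is where ``non-cocompact'' behaviour first appears. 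So excluding $(4,4,2)$-triangles should be exactly the right hypothesis to control the local structure of the cube complex, and I would make this quantitative by a case analysis over rank $3$ even parabolic subgroups.

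For the second ingredient, I would use a Grossman-type lifting criterion: if $H\le W$ has finite index and $H$ is hereditarily conjugacy separable, then $W$ is conjugacy separable provided every class-preserving automorphism of $W$ is inner (equivalently, no element of $W\setminus H\cdot C_W(H)$ induces a pointwise-inner automorphism of $H$). Granting the paper's statement that an automorphism of $W$ preserving the conjugacy class of every sufficiently short element is inner, this hypothesis is available, and I would feed it into the standard separation argument: given non-conjugate $x,y\in W$, one first reduces (using the finite-index structure and residual finiteness of $W$) to the case $x,y\in H$ and $x,y$ not conjugate in $H$; the hereditary conjugacy separability of $H$ then produces a finite quotient of $H$ separating them; and the class-preserving-implies-inner input rules out the ambient obstruction coming from $W\setminus H$.

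The main obstacle, in my view, is verifying cocompactness of the Niblo--Reeves action under the no-$\widetilde{B}_2$ hypothesis; this is a genuinely Coxeter-theoretic combinatorial analysis, not a purely cubical one, since one has to identify precisely which walls and residues appear and why they fall into finitely many $W$-orbits once $\widetilde{B}_2$ is excluded. A secondary but substantial difficulty is arranging the class-preserving-implies-inner statement in the exact form needed by the Grossman-style lifting: one must ensure that an automorphism of $W$ that is class-preserving on $W$ (not just on $H$, and not merely up to the profinite topology) is inner, which is why the bulk of the paper is devoted to the parabolic structure and to comparing Coxeter generating sets that induce the same parabolic subgroups.
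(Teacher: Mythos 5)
Your first ingredient matches the paper: the hypothesis of no $(4,4,2)$-triangles is exactly (for even $W$) the absence of irreducible affine subdiagrams of rank $\geq 3$, which by Caprace--M\"uhlherr gives cocompactness of the Niblo--Reeves action, and then Haglund--Wise plus \cite{Min-RAAG} yields a hereditarily conjugacy separable normal subgroup $H$ of finite index. (You do not need to redo the cocompactness analysis by hand; it is quoted from \cite{Caprace-Muhlherr}.)

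The second ingredient is a genuine gap. There is no ``Grossman-type lifting criterion'' of the form you describe: Grossman's theorem uses conjugacy separability of $G$ plus ``pointwise inner implies inner'' to deduce residual finiteness of $\Out(G)$; it does not run in the direction of transferring conjugacy separability from a finite-index subgroup $H$ up to $G$. Indeed, conjugacy separability is in general \emph{not} inherited by finite-index overgroups (this is exactly the difficulty the paper flags, citing \cite{Goryaga} and \cite{M-M}), and the obstruction is not an automorphism of $W$ but the failure of individual conjugacy classes $x^W$ to be closed; your proposed reduction ``to the case $x,y\in H$'' is not available when $x\notin H$. Two symptoms that the proposed criterion cannot be right: (a) it never uses evenness of $W$ beyond cocompactness, so it would answer Question~\ref{q:1} affirmatively for every cocompactly cubulated Coxeter group, which the authors explicitly do not know how to do; and (b) the pointwise-inner result holds for \emph{all} finitely generated Coxeter groups, so nothing in your second step distinguishes the even case. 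What the paper actually does is element-by-element: for \emph{essential} $x$ it applies the Chagas--Zalesskii criterion (Proposition~\ref{prop:C-Z-crit}), whose hypotheses are verified using Krammer's theorem that $C_W(x^m)$ is virtually cyclic (Lemmas~\ref{lem:essential-power} and~\ref{lem:Krammer}) together with separability of virtually abelian subgroups (Lemma~\ref{lem:cox_cyc_sbgp_sep}); for \emph{non-essential} $x$ it uses a completely different retraction criterion (Lemma~\ref{lem:retr_crit}), which is where evenness is essential --- only in an even Coxeter group is killing a subset of the generators a well-defined retraction onto a standard parabolic --- and the two cases are glued by induction on the rank (Proposition~\ref{prop:even_Cox_with_hcs_sbgp-cs}). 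Your outline contains none of this, so the proof does not go through as proposed.
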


Even Coxeter groups covered  by Theorem \ref{thm:some_even_Cox-cs} are precisely the ones that act cocompactly on their Niblo-Reeves cube complexes. This follows from
a result of the first author and M\"uhlherr \cite{Caprace-Muhlherr} stating that the action of  $W$ on its Niblo-Reeves cubulation is cocompact if and only if its  Coxeter diagram has no irreducible affine subdiagrams
of rank at least $3$. By the classification of irreducible affine Coxeter groups, in the case when $W$ is even the latter condition is equivalent to the absence of $(4,4,2)$-triangles in the Coxeter diagram of $W$.
In particular, Theorem \ref{thm:some_even_Cox-cs} applies if $W$ is right-angled or if $W$ is even and word hyperbolic.

The proof of Theorem \ref{thm:some_even_Cox-cs} basically splits into two parts.
In the first part we employ a criterion of Chagas and Zalesskii \cite{C-Z_bianchi} to show that \emph{essential} elements in $W$ {(i.e., elements not contained in any proper parabolic subgroup)}
are conjugacy distinguished. {This relies on the fact that} $W$
contains a hereditarily conjugacy separable subgroup of finite index, as discussed above.
In the second part, to deal with non-essential elements we introduce a new criterion
(Lemma \ref{lem:retr_crit}), which works because standard parabolic subgroups in even Coxeter groups are retracts. In particular we prove that finite order
elements are conjugacy distinguished in any even Coxeter group (Proposition \ref{prop:f_o_in_Cox-cd}).

{Another standard application of conjugacy separability was discovered by Grossman
\cite{Grossman}, who
proved that for a finitely generated conjugacy separable group $G$, the group of outer automorphisms $\Out(G)$ is residually finite, provided every pointwise inner automorphism of $G$ is inner.
Recall that an automorphism $\alpha$ of a group $G$ is called \textbf{pointwise inner} if $\alpha(g)$ is conjugate to $g$ for every $g \in G$.
Presently it is unknown whether the outer automorphism group of every finitely generated  Coxeter group is residually finite.
That some (and conjecturally all) Coxeter groups  are conjugacy separable therefore motivates the question whether pointwise inner automorphisms
of Coxeter groups are necessarily inner. {A positive answer for \emph{all} finitely generated Coxeter groups is provided by Corollary~\ref{cor:CoxPtwiseInner} below. This will be deduced from a study of
automorphisms {that preserve} parabolic subgroups. In order to present a precise formulation, we first recall that
an automorphism of a Coxeter group is called \textbf{inner-by-graph} if it maps a Coxeter generating set $S$ to a (setwise) conjugate of itself.
Such an automorphism is thus a composition of an inner automorphism with a \textbf{graph automorphism}, i.e., an automorphism which stabilises the Coxeter generating set $S$.


\begin{thm}\label{thm:CoxInner}
Let $W$ be a finitely generated Coxeter group with Coxeter generating set $S$, and let  $\alpha \in \Aut(W)$ be an automorphism.

Then $\alpha$ is inner-by-graph if and only if it satisfies the following two conditions:
\begin{enumerate}[(1)]
\item $\alpha$ maps every parabolic subgroup to a parabolic subgroup.

\item For all $s, t \in S$, such that $st$ has finite order {in $W$}, there is a pair $s', t' \in S$ such that $\alpha(st)$ is conjugate to $s't'$.
\end{enumerate}
\end{thm}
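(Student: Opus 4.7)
\emph{The easy direction.} Assume $\alpha(S) = wSw^{-1}$ for some $w \in W$, inducing a graph automorphism $\sigma$ of $(W,S)$ with $\alpha(s) = w\sigma(s)w^{-1}$ for every $s\in S$. Every parabolic subgroup has the form $g\langle T\rangle g^{-1}$ with $T\subseteq S$, and its image under $\alpha$ equals $\alpha(g)w\langle\sigma(T)\rangle w^{-1}\alpha(g)^{-1}$, which is again parabolic; this gives (1). Condition (2) follows because $\alpha(st) = w\sigma(s)\sigma(t)w^{-1}$ is conjugate to the product $\sigma(s)\sigma(t)$ of two elements of $S$.

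\emph{Setup for the converse.} Now assume (1) and (2). Applying $\alpha$ to the defining Coxeter presentation of $W$ shows that $\alpha(S)$ is itself a Coxeter generating set of $W$ with the same Coxeter matrix $M$, and (1) implies that $\alpha(S)$ and $S$ produce the same collection of parabolic subgroups of $W$. Examining the rank-one parabolics yields, for each $s\in S$, an element $s^* \in S$ conjugate to $\alpha(s)$; applying (1) to the rank-two parabolics $\langle s,t\rangle$ will then upgrade the rule $s\mapsto s^*$ to a bijection $\sigma\colon S\to S$ preserving the Coxeter matrix, i.e., a graph automorphism. Writing $\alpha(s) = g_s\,\sigma(s)\,g_s^{-1}$ for each $s\in S$, the remaining task is to show that the conjugators $g_s$ may be chosen to coincide with a single common element $w\in W$.

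\emph{Invoking the structural theory and eliminating twists.} The plan is to invoke the analysis, carried out in the preceding section of the paper, of Coxeter generating sets sharing the same collection of parabolic subgroups. The anticipated output is a description of the deviation of $\alpha(S)$ from the conjugate set $\sigma(S)$ as a family of local ``twists'' supported in the finite dihedral parabolics $\langle\sigma(s),\sigma(t)\rangle$, one for each edge $\{s,t\}$ of the Coxeter diagram with $m_{st} < \infty$. Condition (2) is tailored to rule out each such twist: for $m_{st}$ finite, the requirement that $\alpha(st) = g_s\sigma(s)g_s^{-1}\cdot g_t\sigma(t)g_t^{-1}$ be conjugate to a product $s't'$ with $s',t'\in S$ pins down the coset of $g_s^{-1}g_t$ modulo the centraliser of $\langle\sigma(s),\sigma(t)\rangle$ and forces the pair $(\alpha(s),\alpha(t))$ to be a simultaneous $W$-conjugate of $(\sigma(s),\sigma(t))$. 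A connectedness/patching argument along the spherical subdiagram (the subgraph of edges with finite label) then merges these local conjugators into a single $w\in W$ with $\alpha(s) = w\sigma(s)w^{-1}$ on that subdiagram; edges $\{s,t\}$ with $m_{st}=\infty$ cause no obstruction, because (1) applied to the infinite dihedral parabolic $\langle s,t\rangle$ already restricts the conjugators compatibly. The main obstacle is the careful extraction, from the paper's structural theorem on parabolic-preserving generating sets, of the precise twist classification and the verification that condition (2) suffices to annihilate every admissible twist; the patching and propagation steps are expected to be routine by comparison.
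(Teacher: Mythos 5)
The easy direction and the reduction of condition (2) to ``each spherical pair $\{s,t\}$ is simultaneously conjugate to a spherical pair in $\alpha(S)$'' are fine in outline; the latter is the paper's Lemma~\ref{lem:AngleCompat}, though note that even this step is not free: to see that $\alpha(s)$ and $\alpha(t)$ lie in a common rank-two parabolic that is conjugate to $\langle s,t\rangle$, the paper needs Lemma~\ref{lem:CoxeterElement:Refl} (a reflection subgroup containing $s't'$ contains $\{s',t'\}$) to identify the parabolic closure of $\alpha(st)$ on both sides. Your sketch of a bijection $\sigma$ preserving the Coxeter matrix also needs this; individually conjugating each $\alpha(s)$ into $S$ does not by itself give injectivity of $s\mapsto s^*$ or control of the labels $m_{st}$.

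The genuine gap is the entire third step. You defer the converse to a ``twist classification'' of the deviation of $\alpha(S)$ from $\sigma(S)$, followed by patching local conjugators along the finite-label subgraph of the diagram. The paper contains no such classification, and this is not an omission of routine material: classifying the possible deviations between two Coxeter generating sets sharing angles and parabolics is essentially the content of the twist-rigidity problem (cf.\ \cite{TwistRigid}, \cite{MM08}), and it is exactly what Proposition~\ref{prop:MainTech} is engineered to circumvent. That proposition is proved by induction on the rank, peeling off one generator $s_0$ at a time via Lemma~\ref{lem:Induct}, and the step where the two candidate generators $s_0$ and $s_0'$ are shown to coincide uses Deodhar/Krammer normalizer results (Lemma~\ref{lem:Centra}), the $2$-spherical rigidity theorem of \cite{CM07}, and Bass--Serre theory in the rank-three free-product cases. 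Your patching scheme breaks precisely where these tools are invoked: the subgraph of edges with $m_{st}<\infty$ may be disconnected, so there is nothing to patch along; and your claim that for $m_{st}=\infty$ condition (1) ``already restricts the conjugators compatibly'' is unsupported --- knowing that $\alpha(\langle s,t\rangle)$ is some conjugate of a standard parabolic does not pin down the relative conjugator $g_s^{-1}g_t$ modulo anything useful, since a non-spherical pair has no canonical common chamber and the simultaneous-conjugacy statement for such a pair is exactly one of the hard subcases of Proposition~\ref{prop:MainTech}. In short, the argument as written assumes the theorem's substance (a local-to-global rigidity statement) rather than proving it.
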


{Theorem \ref{thm:CoxInner} will follow from} Proposition~\ref{prop:MainTech} and Theorem~\ref{thm:ParabCompat} below.
The condition (2) in Theorem~\ref{thm:CoxInner} can be interpreted geometrically: it means that the reflections $s$ and $t$ are mapped by $\alpha$ to a
pair of reflections such that the \emph{angle} between their fixed walls is preserved. In the terminology recalled in Section~\ref{sec:ParabPreserv}
below, we say that the Coxeter generating sets $S$ and $\alpha(S)$ are \emph{angle-compatible} {(cf. Lemma \ref{lem:AngleCompat})}.
For a thorough study of the relation of angle-compatibility, we refer to \cite{MM08}.

It is easy to see that condition (2) is necessary for $\alpha$ to be inner-by-graph: examples illustrating that matter of fact may be found among finite dihedral groups.
It turns out, however, that is $W$ is \textbf{crystallographic}, i.e., if $m_{ij} \in \{2, 3, 4, 6, \infty\}$ for all $i \neq j$, then  condition (2) is automatically satisfied. In particular we obtain

\begin{cor}\label{cor:cryst}
Let $W$ be a finitely generated crystallographic Coxeter group. Then an automorphism $\alpha \in \Aut(W)$ is inner-by-graph if and only if $\alpha$ maps every parabolic subgroup to  a parabolic subgroup.
\end{cor}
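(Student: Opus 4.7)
\medskip

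\noindent\emph{Proof proposal.} My plan is to deduce the corollary directly from Theorem~\ref{thm:CoxInner}. The forward implication is essentially formal: if $\alpha(S) = wSw^{-1}$ for some $w\in W$, then for any $T\subseteq S$ and $g\in W$ one has $\alpha(g\langle T\rangle g^{-1}) = \alpha(g)w\langle w\inv \alpha(T)w\rangle w\inv\alpha(g)\inv$, with $w\inv\alpha(T)w\subseteq S$, so that parabolics go to parabolics. Thus the real content of the corollary is to show, in the crystallographic setting, that condition~(1) of Theorem~\ref{thm:CoxInner} implies condition~(2); combining this with Theorem~\ref{thm:CoxInner} then finishes the proof.

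So fix $s,t\in S$ with $m:=m_{st}$ finite; by the crystallographic hypothesis $m\in\{2,3,4,6\}$. The parabolic $\langle s,t\rangle$ is dihedral of order $2m$, and by condition~(1), $\alpha(\langle s,t\rangle) = w\langle T\rangle w\inv$ for some $T\subseteq S$ and $w\in W$. Since $\alpha$ is an automorphism, $\langle T\rangle$ is also dihedral of order $2m$; but every Coxeter presentation of a non-trivial dihedral group has rank $2$, so $T=\{s',t'\}$ with $m_{s't'}=m$. Thus $\alpha(st)$ is an element of order $m$ in the conjugate dihedral $w\langle s',t'\rangle w\inv$, and our task is to see that it is conjugate in $W$ to $s't'$.

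For $m\in\{3,4,6\}$, the key arithmetic observation is that $(\Z/m\Z)^{\times}=\{\pm 1\}$, so the only elements of order $m$ in the dihedral group $\langle s',t'\rangle$ are $(s't')^{\pm 1}$; these are interchanged by conjugation by $s'$, and are therefore both conjugate to $s't'$. Hence $\alpha(st)\sim s't'$ in $W$, as required.

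The subtle case, and the one I expect to be the main obstacle, is $m=2$: here $\langle s',t'\rangle\cong\Z/2\times\Z/2$ has three non-identity elements, all of order $2$, so the order alone does not single out $s't'$. To separate $s't'$ from the reflections $s',t'$ I would use the sign homomorphism $\varepsilon:W\to\{\pm 1\}$ with $\varepsilon(s)=-1$ for all $s\in S$ (well-defined because every Coxeter relation has even length). All reflections (i.e.\ conjugates of elements of $S$) map to $-1$, whereas $\varepsilon(st)=+1$ when $m_{st}=2$, so $st$ is not a reflection. Now the hypothesis~(1) immediately forces $\alpha$ to preserve rank-$1$ parabolics (since parabolic subgroups of order $2$ are precisely conjugates of the $\langle s\rangle$), hence to permute the set of reflections. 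Consequently $\alpha(st)$ is not a reflection, and among the three non-trivial elements of $w\langle s',t'\rangle w\inv$ the only non-reflection is $ws't'w\inv$. Therefore $\alpha(st)=ws't'w\inv\sim s't'$, condition~(2) is verified in all cases, and Theorem~\ref{thm:CoxInner} applies.
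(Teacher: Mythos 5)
Your overall strategy coincides with the paper's: reduce to showing that condition (1) of Theorem~\ref{thm:CoxInner} forces condition (2), and then exploit the arithmetic of the dihedral groups of order $4,6,8,12$. The forward direction, the cases $m\in\{3,4\}$, and the $m=2$ case (via the sign character and the fact that (1) forces $\alpha$ to permute reflections) are all fine. However, there is a genuine gap in the step where you claim that ``every Coxeter presentation of a non-trivial dihedral group has rank $2$,'' and hence that $T$ is a pair $\{s',t'\}$ with $m_{s't'}=m$. This is false precisely in the crystallographic case $m=6$: the dihedral group of order $12$ is isomorphic to $D_3\times C_2$, i.e.\ it is a Coxeter group of type $A_2\times A_1$ and admits a Coxeter generating set of rank $3$. (The paper points this out explicitly when discussing Lemma~\ref{lem:Card}.) So condition (1) alone does not immediately rule out that $\alpha(W_{\{s,t\}})$, for $m_{st}=6$, is conjugate to a standard parabolic $W_T$ with $T=\{u,v,z\}$ of type $A_2\times A_1$. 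This is not a harmless omission: in that hypothetical situation $\alpha(st)$ would be conjugate to $(uvz)^{\pm 1}$, whose parabolic closure has rank $3$ by Lemma~\ref{lem:CoxeterElement}, whereas $\Pc(s't')$ has rank at most $2$ for any pair $s',t'\in S$; so $\alpha(st)$ would \emph{not} be conjugate to any $s't'$ and condition (2) would fail. Your argument as written therefore does not close the case $m=6$.

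The missing ingredient is exactly what the paper supplies via Lemma~\ref{lem:Autom}: since (1) makes $S$ and $\alpha(S)$ reflection-compatible, the parabolic closure argument shows that $\alpha(W_{\{s,t\}})$ is a spherical parabolic of rank two with respect to $S$, which legitimizes $T=\{s',t'\}$. (Alternatively, one can count reflections: $W_{\{s,t\}}$ of type $I_2(6)$ contains six reflections of $W$, while a parabolic of type $A_2\times A_1$ contains only four, and you have already observed that (1) forces $\alpha$ to permute the set of reflections.) Once the rank-two claim is secured, your order/sign analysis of the dihedral group goes through; the paper instead phrases the endgame as the statement that reflection-compatible Coxeter generating pairs of a dihedral group of order $4,6,8$ or $12$ are angle-compatible (using Lemma~\ref{lem:CompatHered}), which is equivalent to your computation with $(\Z/m\Z)^{\times}=\{\pm 1\}$.
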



We shall also see that an automorphism of $W$ preserving the conjugacy class of every element of small word length {(with respect to $S$)}
also satisfies the conditions of Theorem~\ref{thm:CoxInner}. {In fact, in such a case one can even exclude graph automorphisms, thereby yielding the following corollary.}

\begin{cor}\label{cor:SmallWords}
Let $W$ be a finitely generated Coxeter group with Coxeter generating set $S$, and let  $\alpha \in \Aut(W)$ be an automorphism.
Suppose that $\alpha(w)$ is conjugate to $w$ for all elements $w$ that can be written as  products of pairwise distinct generators  (in particular the word length of such elements is bounded above by $|S|$).

Then $\alpha$ is inner.
\end{cor}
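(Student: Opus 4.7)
The strategy is to apply Theorem~\ref{thm:CoxInner} to decompose $\alpha$ as an inner automorphism composed with a graph automorphism, and then to use the full strength of the hypothesis to show that the graph-automorphism component is itself inner.

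\emph{Verifying the hypotheses of Theorem~\ref{thm:CoxInner}.} Condition~(2) is immediate: for distinct $s,t \in S$ with $m_{st} < \infty$, the element $st$ is a product of two distinct generators, so by hypothesis $\alpha(st)$ is conjugate to $st$, and we may take $(s',t') = (s,t)$. Condition~(1), that $\alpha$ maps each parabolic subgroup to a parabolic subgroup, requires more work. Specialising the hypothesis to single generators gives $\alpha(s) \sim s$ in $W$ for every $s \in S$, so $\alpha$ sends each reflection to a conjugate reflection. Combining this with the conjugacy preservation on all products of pairwise distinct generators and invoking the angle-compatibility framework of Section~\ref{sec:ParabPreserv} (in particular Proposition~\ref{prop:MainTech}), one shows that $\alpha$ preserves the collection of parabolic subgroups.

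\emph{Reducing to a graph automorphism.} Theorem~\ref{thm:CoxInner} now yields $g \in W$ with $\alpha(S) = gSg^{-1}$. Setting $\beta := \iota_{g^{-1}} \circ \alpha$ produces a graph automorphism $\beta$ of $W$, inducing a permutation $\pi$ of $S$; since inner automorphisms preserve conjugacy classes, $\beta$ still preserves the conjugacy class of every product of distinct generators. It remains to show $\beta \in \Inn(W)$, for then $\alpha = \iota_g \circ \beta$ is inner. The plan is to prove that $\beta$ is \emph{pointwise inner}---preserving the conjugacy class of every element of $W$---and then to appeal to Corollary~\ref{cor:CoxPtwiseInner}.

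\emph{From short words to pointwise inner.} Since $\beta$ permutes $S$, any reduced expression $w = s_{i_1} \cdots s_{i_k}$ satisfies $\beta(w) = s_{\pi(i_1)} \cdots s_{\pi(i_k)}$; in particular $\beta$ preserves the length function and maps each standard parabolic $\langle T \rangle$ to $\langle \pi(T) \rangle$. Proceeding by induction on the rank of the support of $w$, one analyses the restriction of $\beta$ to the standard parabolic $\langle T \rangle$ (and to its image), which is itself a graph automorphism satisfying the same hypothesis on a smaller generating set. The base cases, essentially finite dihedral groups, are handled by the computations implicit in the necessity discussion following Theorem~\ref{thm:CoxInner} (e.g.\ conjugation of dihedral rotations by the longest element). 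This last step---extending the sparse hypothesis (conjugacy of products of \emph{distinct} generators, hence on words of length at most $|S|$) to control elements with repeated letters---is the main obstacle in the argument.
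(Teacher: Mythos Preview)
Your proposal has two genuine gaps, and both stem from missing the key lemma.

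First, your verification of condition~(1) of Theorem~\ref{thm:CoxInner} is not a proof: you invoke Proposition~\ref{prop:MainTech}, but that proposition takes parabolic-compatibility as a \emph{hypothesis}, not as a conclusion. Nothing in Section~\ref{sec:ParabPreserv} lets you deduce that $\alpha$ preserves parabolics from the information you have listed. The missing ingredient is Lemma~\ref{lem:CoxeterElement:Refl}: for $J \subseteq S$ with Coxeter element $x_J$, the group $W_J$ is the unique minimal \emph{reflection} subgroup containing $x_J$. Since $\alpha$ sends reflections to reflections, $\alpha(W_J)$ is a reflection subgroup containing $\alpha(x_J) \sim x_J$; the lemma (applied symmetrically with respect to the Coxeter generating sets $S$ and $\alpha(S)$) then forces $\alpha(W_J)$ to be conjugate to $W_J$ \emph{itself}.

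Second, your endgame is circular. You propose to show the graph automorphism $\beta$ is pointwise inner and then invoke Corollary~\ref{cor:CoxPtwiseInner}; but in the paper Corollary~\ref{cor:CoxPtwiseInner} is an immediate consequence of Corollary~\ref{cor:SmallWords}, the very statement you are proving. Moreover, the inductive scheme you sketch for promoting ``products of distinct generators'' to arbitrary elements is left entirely open, as you yourself flag.

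The paper avoids both problems at once. Because Lemma~\ref{lem:CoxeterElement:Refl} shows $\alpha(W_J)$ is conjugate to $W_J$ (not merely to some parabolic), one can apply Corollary~\ref{cor:Inner} directly and conclude that $\alpha$ is inner --- no detour through Theorem~\ref{thm:CoxInner}, no reduction to a graph automorphism, and no appeal to pointwise-inner results.
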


The following consequence of Corollary~\ref{cor:SmallWords} is immediate:
}

\begin{cor}\label{cor:CoxPtwiseInner}
Every pointwise inner automorphism of a finitely generated Coxeter group is inner.
\end{cor}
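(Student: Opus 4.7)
The plan is to derive Corollary~\ref{cor:CoxPtwiseInner} as an immediate specialisation of Corollary~\ref{cor:SmallWords}. Fix a Coxeter generating set $S$ for $W$, and let $\alpha \in \Aut(W)$ be pointwise inner. By definition, $\alpha(g)$ is conjugate to $g$ for \emph{every} $g \in W$. In particular, this property holds for every element $w \in W$ that can be expressed as a product of pairwise distinct elements of $S$, which is a very special (and finite) subfamily of $W$.

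Since the hypothesis of Corollary~\ref{cor:SmallWords} is precisely the assertion that $\alpha(w)$ is conjugate to $w$ for this restricted family of words, it is automatically satisfied once $\alpha$ is assumed to be pointwise inner. Applying Corollary~\ref{cor:SmallWords} then yields that $\alpha$ is inner, which is the desired conclusion. There is essentially no technical obstacle in the present step: all the substantive work has been done in Theorem~\ref{thm:CoxInner} and Corollary~\ref{cor:SmallWords}, and the present statement merely records the observation that the pointwise inner hypothesis is strictly stronger than what those earlier results require.
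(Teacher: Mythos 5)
Your proof is correct and matches the paper exactly: the paper also presents Corollary~\ref{cor:CoxPtwiseInner} as an immediate consequence of Corollary~\ref{cor:SmallWords}, since a pointwise inner automorphism trivially satisfies its hypothesis.
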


Combining Corollary \ref{cor:CoxPtwiseInner}, Theorem \ref{thm:some_even_Cox-cs} together with the theorem of Grossman \cite{Grossman} mentioned above we obtain  {the following}.

\begin{cor}\label{cor:Out_rf}
Assume that $W$ is a finitely generated even Coxeter group whose Coxeter diagram contains no $(4,4,2)$-triangles. Then $\Out(W)$ is residually finite.
\end{cor}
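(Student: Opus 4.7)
The plan is to deduce the corollary as an immediate combination of three ingredients already assembled in the paper: Theorem~\ref{thm:some_even_Cox-cs}, Corollary~\ref{cor:CoxPtwiseInner}, and Grossman's residual finiteness criterion from~\cite{Grossman}. No new argument is required, and the proof should amount to a single sentence of citations; the paragraph preceding the statement of the corollary already signals exactly this combination.

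First, verify that the hypotheses of Theorem~\ref{thm:some_even_Cox-cs} are satisfied verbatim: $W$ is even, its Coxeter diagram contains no $(4,4,2)$-triangles, and finite generation of a Coxeter group is equivalent to finite rank. Hence $W$ is conjugacy separable. Next, Corollary~\ref{cor:CoxPtwiseInner} applies to $W$ since it is a finitely generated Coxeter group, so every pointwise inner automorphism of $W$ is inner.

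Grossman's theorem asserts that for any finitely generated conjugacy separable group $G$ in which every pointwise inner automorphism is inner, $\Out(G)$ is residually finite. Applied with $G = W$, this yields the conclusion. The underlying mechanism, which I would merely quote rather than reprove, is as follows: a non-inner $\alpha \in \Aut(W)$ must move the conjugacy class of some $g \in W$; conjugacy separability then produces a finite quotient of $W$ in which the images of $g$ and $\alpha(g)$ are non-conjugate; and an intersection argument over the (finite) $\Aut(W)$-orbit of the corresponding finite-index normal subgroup replaces it by a characteristic finite-index subgroup $K \leq W$, so that $\alpha$ descends to a non-inner automorphism of the finite group $W/K$. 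This produces a homomorphism from $\Out(W)$ to a finite group in which the image of $\alpha$ remains non-trivial.

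There is essentially no obstacle here; the substantive work has already been carried out earlier in the paper, in establishing Theorem~\ref{thm:some_even_Cox-cs} and Corollary~\ref{cor:CoxPtwiseInner}.
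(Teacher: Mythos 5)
Your proposal is correct and coincides with the paper's own (one-line) derivation: the corollary is obtained precisely by combining Theorem~\ref{thm:some_even_Cox-cs}, Corollary~\ref{cor:CoxPtwiseInner}, and Grossman's criterion. The extra sketch of Grossman's argument is accurate but not needed.
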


}



\section{Criteria for conjugacy separability of finite index overgroups}
Let $G$ be a group. The \textbf{profinite topology} $\mathcal{PT}(G)$ on $G$ is the topology whose basic open
sets are cosets of finite index normal subgroups in $G$. It follows that every finite index subgroup $K \leqslant G$ is both
closed and open in  $\mathcal{PT}(G)$, and $G$, equipped with  $\mathcal{PT}(G)$, is
a topological group (that is, the group operations are continuous with respect to this
topology). This topology is Hausdorff if and only if the intersection of all finite index
normal subgroups is trivial in $G$. In this case $G$ is said to be \textbf{residually finite}.

A subset $S$ of a group $G$ is said to be \textbf{separable} if $S$ is closed in the profinite topology on $G$.
An element $x \in G$ is \textbf{conjugacy distinguished} if $x^G$ is separable in $G$
(if $S\subseteq G$ and $x \in G$ then $x^S:=\{s^{-1}xs \mid s \in S\} \subseteq G$).
This is equivalent to the following statement: for every element $y \in G \setminus x^G$ there
exist a finite group $F$ and a homomorphism $\alpha:G \to F$ such that $\alpha(y) \notin \alpha(x)^F$ in $F$.

\begin{rem} \label{rem:subspace_top}
{Let $G$ be a group and $H \leqslant G$ a subgroup.} Then the subspace topology on $H$ induced from  $\mathcal{PT}(G)$ is weaker than the $\mathcal{PT}(H)$
(in other words, if a subset $S \subseteq H$ is closed in the subspace topology {induced from $\mathcal{PT}(G)$}, then it is also closed in  $\mathcal{PT}(H)$). If, in addition $H$ has finite index in $G$,
then this induced subspace topology coincides with  $\mathcal{PT}(H)$, i.e., a subset $S \subseteq H$ is closed in  $\mathcal{PT}(H)$ if and only if it is closed in  $\mathcal{PT}(G)$.
\end{rem}

The following criterion was discovered by Chagas and Zalesskii in \cite[Prop. 2.1]{C-Z_bianchi}; we will present a proof here for the sake of completeness.
\begin{prop}\label{prop:C-Z-crit} Let $H$ be a normal subgroup of a group $G$ such that $|G:H|=m<\infty$ and let $x \in G$. Suppose that $H$ is hereditarily conjugacy separable
and the $G$-centralizer $C_G(x^m)$ of $x^m \in H$  satisfies the following conditions:
\begin{itemize}
	\item[(i)] $C_G(x^m)$ is conjugacy separable;
	\item[(ii)] each finite index subgroup $N$, of  $C_G(x^m)$, is separable in $G$.	
\end{itemize}
Then the conjugacy class $x^G$ is separable in $G$.
\end{prop}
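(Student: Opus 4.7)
The plan is to show $x^G$ is closed in $\mathcal{PT}(G)$ by establishing, for every $y \in G \setminus x^G$, that there is a finite-index normal subgroup $K$ of $G$ such that $y \notin x^G K$.

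First I would verify that $(x^m)^G$ is closed in $\mathcal{PT}(G)$. Since $|G:H|=m$, the element $x^m$ lies in $H$, and because $H$ is normal in $G$, $(x^m)^G$ decomposes as the finite union $(x^m)^G = \bigcup_{i=1}^{m} (g_i x^m g_i^{-1})^H$, where $g_1,\dots,g_m$ are coset representatives of $H$ in $G$. By conjugacy separability of $H$, each $H$-class in the union is closed in $\mathcal{PT}(H)$, and hence closed in $\mathcal{PT}(G)$ by Remark~\ref{rem:subspace_top} (applied to the finite-index subgroup $H \leqslant G$). If $y^m \notin (x^m)^G$, choosing a finite-index normal subgroup $K$ of $G$ with $y^m \notin (x^m)^G K$ gives $y \notin x^G K$, since $(gxg^{-1}k)^m \in g x^m g^{-1} K$ for every $g \in G$ and $k \in K$ by normality of $K$. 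Otherwise $y^m$ is $G$-conjugate to $x^m$, and after replacing $y$ by a suitable $G$-conjugate we may assume $y^m = x^m$, so that $y \in C := C_G(x^m)$.

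Because $x^C \subseteq x^G$, we have $y \notin x^C$, and condition~(i) provides a finite-index normal subgroup $N$ of $C$ with $y \notin x^C N$. Condition~(ii) ensures that $N$ is closed in $\mathcal{PT}(G)$; a standard argument excluding the finitely many nontrivial cosets of $N$ in $C$ from $N$ then produces a finite-index normal subgroup $K$ of $G$ with $K \cap C \subseteq N$. Suppose for contradiction that $y \in x^G K$, i.e.\ $y = gxg^{-1}k$ for some $g \in G$ and $k \in K$. Raising to the $m$-th power and using $y^m = x^m$ together with normality of $K$ yields $[g, x^m] \in K$. If $K$ is chosen finely enough that the centralizer of $x^m K$ in $G/K$ coincides with the image $CK/K$ of $C$---equivalently, $\{g \in G : [g, x^m] \in K\} = CK$---then $g \in CK$, say $g = ck'$ with $c \in C$ and $k' \in K$. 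A short computation using normality of $K$ gives $y \equiv cxc^{-1} \pmod{K}$; since both $y$ and $cxc^{-1}$ lie in $C$, the residue lies in $K \cap C \subseteq N$, placing $y \in x^C N$ and contradicting the choice of $N$.

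The main obstacle is arranging this centralizer condition $\{g \in G : [g, x^m] \in K\} = CK$ simultaneously with $K \cap C \subseteq N$. The plan is to invoke closedness of $(x^m)^G$ a second time: starting from a preliminary $K_0$ with $K_0 \cap C \subseteq N$, each of the finitely many cosets of $CK_0$ inside the commutator preimage $\{g \in G : [g, x^m] \in K_0\}$ is represented by some $g \notin C$, whence $[g, x^m] \neq 1$; residual finiteness of $G$ (which follows from conjugacy separability of $H$ and $|G:H| < \infty$) then produces a finite-index normal subgroup of $G$ excluding this commutator. Intersecting $K_0$ with the finitely many subgroups so obtained refines $K_0$ to the desired $K$. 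The delicate point is that shrinking $K_0$ simultaneously shrinks $CK$, so the iteration must be carried out inside the fixed finite quotient $G/K_0$, coordinating closedness of $(x^m)^G$ with the separation property supplied by~(ii) so that both constraints on $K$ persist.
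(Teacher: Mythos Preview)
Your overall architecture matches the paper's: split into the case $y^m\notin (x^m)^G$ (handled by closedness of $(x^m)^G$) and the case $y^m=x^m$, then use (i) to find $N\lhd C$ with $y\notin x^C N$, use (ii) to find $K_0\lhd G$ with $K_0\cap C\leqslant N$, and finally arrange a ``centralizer lifting'' condition so that any putative conjugator in the finite quotient comes from $CK_0$. The first two steps are fine and coincide with the paper.

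The gap is in the last step. You try to force $\{g:[g,x^m]\in K\}\subseteq CK_0$ using only residual finiteness of $G$ (and closedness of $(x^m)^G$). This does not work. Write $h=x^m$. For a coset representative $g_i\notin CK_0$ with $[g_i,h]\in K_0$, every other $g=g_i ck_0$ in $g_iCK_0$ satisfies $[g,h]=[g_i,h]\cdot b$ for some $b\in K_0$ depending on $c,k_0$; so the commutators coming from that coset sweep out a whole $K_0$-coset's worth of elements inside $K_0$, not a single point. Excluding the one element $[g_i,h]$ from a smaller normal subgroup $L$ does not exclude the others, so after refining to $K=K_0\cap L$ you can still have $g\in g_iCK_0$ with $[g,h]\in K$. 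In other words, the preimage $\{g:[g,h]\in K\}$ need not shrink into $CK_0$. Closedness of $(x^m)^G$ is of no help either, since $g^{-1}hg\in h^G$ for every $g$, so you cannot separate any conjugate of $h$ from $h^G$.

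What is actually needed is to separate each $z_i^{-1}hz_i$ (with $z_i\notin CK_0$) from the set $h^{K_0}$, i.e.\ closedness of the $K_0$-conjugacy class $h^{K_0}$ in $G$. This is precisely where the \emph{hereditary} hypothesis enters: since $K_0\leqslant H$ has finite index, $K_0$ is conjugacy separable, hence $h^{K_0}$ is closed in $K_0$ and therefore in $G$. The paper packages this as a separate lemma (Lemma~\ref{lem:for_a_given_K_sep_cc->CCG}): if $h^{K_0}$ is separable in $G$, then there exists $L\lhd G$ with $L\leqslant K_0$ and $C_{G/L}(\psi(h))\subseteq \psi(C_G(h)K_0)$. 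Your plan never invokes hereditary conjugacy separability beyond what ordinary conjugacy separability of $H$ already gives; that is the missing idea.
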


To prove the proposition we will need the following statement, which is a special case of Lemma 2.7 from \cite{Min-RAAG}:

\begin{lem}\label{lem:for_a_given_K_sep_cc->CCG} Let $G$ be a group.
Suppose that $h \in G$, $K\lhd G$ and $|G:K|<\infty$. If the
subset $h^{K}$ is separable in $G$, then there is a finite index normal subgroup $L \lhd G$
such that $L \leqslant K$ and $ C_{G/L} (\psi(h))  \subseteq \psi\left (C_G(h) K\right)$
in $G/L$, where $\psi:G \to G/L$ is the natural epimorphism.
\end{lem}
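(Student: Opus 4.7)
The plan is to exploit the separability of $h^K$ directly: since $h^K$ is closed in the profinite topology on $G$, each element outside it can be separated from $h^K$ by a coset of a finite index normal subgroup of $G$. The subgroup $L$ will be obtained as the intersection of finitely many such subgroups, one for each ``bad'' coset representative of $K$ in $G$, refined by intersecting with $K$.

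More precisely, I would fix a transversal $g_1,\dots,g_m$ for $K$ in $G$. The elementary observation underlying everything is that $g_i^{-1} h g_i \in h^K$ if and only if $g_i \in C_G(h) K$; indeed, $g_i^{-1} h g_i = k^{-1} h k$ for some $k \in K$ is equivalent to $g_i k^{-1} \in C_G(h)$. Consequently, for each $i$ with $g_i \notin C_G(h) K$, the element $g_i^{-1} h g_i$ lies outside the closed set $h^K$, so there is a finite index normal subgroup $L_i \lhd G$ with $(g_i^{-1} h g_i) L_i \cap h^K = \emptyset$. I would then set $L := K \cap \bigcap_i L_i$, where $i$ ranges over the finitely many bad indices; this is a finite index normal subgroup of $G$ contained in $K$.

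To verify the inclusion $C_{G/L}(\psi(h)) \subseteq \psi(C_G(h) K)$, take any $g \in G$ with $\psi(g)$ centralising $\psi(h)$, equivalently $g^{-1} h g \in hL$. Writing $g = g_i k$ with $k \in K$ and moving $k$ across $L$ (using the normality of $L$ in $G$) yields $g_i^{-1} h g_i \in (k h k^{-1}) L \subseteq h^K L$; by construction of $L_i$, if $i$ were one of the bad indices this would contradict $(g_i^{-1} h g_i) L_i \cap h^K = \emptyset$. Hence $g_i \in C_G(h) K$, and therefore $g = g_i k \in C_G(h) K$, as required.

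I do not anticipate any substantive obstacle: the statement is essentially a direct unpacking of the topological meaning of closedness in terms of cosets of finite index normal subgroups. The only points that require care are that each $L_i$ (and hence the intersection $L$) be normal in all of $G$, so that the conjugate $k h k^{-1}$ can be absorbed into $h^K$ modulo $L$, and that the transversal is finite so that $L$ retains finite index in $G$.
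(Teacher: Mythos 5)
Your proof is correct and follows essentially the same route as the paper's: separate the finitely many conjugates $g_i^{-1}hg_i$ lying outside the closed set $h^K$ by cosets of finite index normal subgroups, intersect with $K$ to get $L$, and then use normality of $L$ to push a centralising element back into $C_G(h)K$. The only cosmetic difference is that you record the equivalence $g_i^{-1}hg_i\in h^K\iff g_i\in C_G(h)K$ up front, whereas the paper extracts the witness $u\in K$ at the end; the content is identical.
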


\begin{proof} By the assumptions, $G=\bigsqcup_{i=1}^k z_iK$ for some $z_1,\dots,z_k \in G$. Renumbering the elements
$z_i$, if necessary, we can suppose that there is $l \in \{0,1,\dots,k\}$ such that  $z_i^{-1} h z_i \notin h^{K}$ whenever
$1 \le i \le l$, and  $z_j^{-1} h z_j \in h^{K}$ whenever
$l+1 \le j \le k$.

By the assumptions, there exists a finite index normal subgroup $L \lhd G$ such that
$z_i^{-1} h z_i \notin h^{K} L$ for $1\le i \le l$.
Moreover, after replacing $L$ with $L \cap K$, we can assume that $L \leqslant K$.

Let $\psi$ be the natural epimorphism from $G$ to $G/L$ and let $\bG:=G/L$, $\bh:=\psi(h) \in \bG$. Consider any element $\bar x \in C_\bG(\bh)$.
Then $\bar x = \psi(x)$ for some $x \in G$, and
$\psi(x^{-1}h x)=\psi(h)$ in $G/L$, i.e., $x^{-1} h x \in hL$ in $G$. As we know, there is
$i \in \{1,\dots,k\}$ and $y \in {K}$ such that $x = z_i y$. Consequently,
$z_i^{-1} h z_i \in y h Ly^{-1} = y h y^{-1}L \subseteq h^{K} L$. Hence, $i \ge l+1$, that is,
$z_i^{-1}h z_i = u h u^{-1}$ for some $u \in {K}$.

Thus $z_i u  \in C_G(h)$ and $x=z_i y = ( z_i u) (u^{-1}y) \in C_G(h) K$. Therefore we proved that $\bx \in \psi(C_G(h)K)$ in $G/L$ for every $\bx \in C_\bG(\bh)$.
This yields the desired inclusion: $ C_{\bG} (\bh)  \subseteq \psi\left (C_G(h) K\right)$ in $\bG=G/L$.
\end{proof}

\begin{proof}[Proof of Proposition \ref{prop:C-Z-crit}]
Consider any $y \in G$ such that $y \notin x^G$.
Suppose, first, that $y^m \notin (x^m)^G$, {where $m = |G:H|$}. Write $G=\bigsqcup_{i=1}^n g_iH $ for some $g_1,\dots,g_n \in G$ and observe that
$y^m \notin (x^m)^{g_iH}= (g_i^{-1}x^mg_i)^H$ for all $i=1,\dots,n$. Since $H$ is conjugacy separable and $g_i^{-1}x^mg_i \in H$, the set $(g_i^{-1}x^mg_i)^H$ is separable in $H$
for every $i$. Hence the set $(x^m)^G=\bigcup_{i=1}^n \left((x^m)^{g_iH}\right)$ is closed in the profinite topology on $H$, implying that it is also separable in $G$ (see Remark \ref{rem:subspace_top}).
Therefore, there is a finite group $F$
and a homomorphism $\alpha:G \to F$ such that $\alpha(y^m)$ is not conjugate to $\alpha(x^m)$ in $F$. Consequently $\alpha(y) \notin \alpha(x)^{F}$, as required.

Thus we can assume that $y^m=g^{-1}x^mg$ for some $g \in G$. Moreover, upon replacing $y$ with $gyg^{-1} \in G$, we can further suppose that $y^m=x^m$. Then $x,y \in C_G(h)$, where $h:=x^m \in H$, and
by conjugacy separability of $C_G(h)$ we can find a finite index normal subgroup $N \lhd C_G(h)$ such that $y \notin x^{C_G(h)}N$ in $G$.
Write $C_G(h)=N \sqcup \bigsqcup_{j=1}^l Nf_j$ for some $f_1,\dots,f_l \in C_G(h)\setminus N$. According to the assumption (ii), there is a finite index
normal subgroup $K \lhd G$ such that $f_j \notin  NK$ for every $j=1,\dots,l$, hence $K \cap Nf_j=\varnothing$ for all $j$. It follows that $K \cap C_G(h) \leqslant N$.

After replacing  $K$ with $H \cap K$, we can assume that $K \leqslant H$. Then $|H:K|<\infty$ and our assumptions imply that $h^K$ is separable in  $H$, and, hence, in $G$.
Therefore, by Lemma \ref{lem:for_a_given_K_sep_cc->CCG}, there is $L \lhd G$ such that $L \leqslant K$ and $ C_{G/L}(\psi(h)) \leqslant \psi\left(C_G(h)K\right)$, where $\psi:G \to G/L$
denotes the natural epimorphism.

We claim that $\psi(y) \notin \psi(x)^{G/L}$ in $G/L$. Indeed, suppose, on the contrary, that there is $u \in G$ such that $\psi(y)=\psi(u)^{-1} \psi(x) \psi(u)$.
Then $\psi(u) \in C_{G/L}(\psi(x^m))$, hence $u \in C_{G}(h)KL=C_G(h)K$. It follows that $y\in u^{-1}xuL \subseteq	x^{C_G(h)}K$ in $G$. But since $x,y \in C_G(h)$,
the latter means that $y \in x^{C_G(h)}(K \cap C_G(h)) \subseteq x^{C_G(h)}N$, contradicting the choice of $N$.

Thus we found a finite quotient-group of $G$ such that the images of $x$ and $y$ are not conjugate in this quotient; therefore $x^G$ is separable in $G$.
\end{proof}

Proposition \ref{prop:C-Z-crit} was used by Chagas and Zalesskii to show that certain torsion-free extensions of hereditarily conjugacy separable groups are conjugacy separable (see \cite{C-Z_bianchi}).
However, in order to deal with torsion we need to find a different criterion.

Let $G$ be a group and let $A$ be a subgroup of $G$.
Recall that an endomorphism $\rho_A: G \to G$ is called a \textbf{retraction} of $G$ onto $A$ if
$\rho_A(G)=A$ and $\rho_A(h)=h$ for every $h \in A$. In this case $A$ is said to be a \textbf{retract} of $G$.
Note that $\rho_A \circ \rho_A=\rho_A$. 

Assume that $A$ and $B$ are two retracts of a group $G$ and $\rho_A,\rho_B\in {\rm End}(G)$ are the corresponding retractions. We will say
$\rho_A$ {\it commutes} with $\rho_B$ if they commute as elements of the monoid of
endomorphisms $\mathrm{End}(G)$, i.e., if $\rho_A ( \rho_B(g))=\rho_B ( \rho_A (g))$ for all $g \in G$.

\begin{rem}[Rem. 4.2 in \cite{Min-RAAG}]\label{rem:inter_comm_retr} If the retractions $\rho_A$ and $\rho_B$ commute then
$\rho_A(B)=A \cap B= \rho_B(A)$ and
the endomorphism $\rho_{A \cap B}:=\rho_A \circ \rho_B=\rho_B \circ \rho_A$
is a retraction of $G$ onto $A \cap B$.
\end{rem}

Indeed, obviously the restriction of $\rho_{A\cap B}$ to ${A\cap B}$ is the identity map.
And $\rho_{A\cap B}(G) \subseteq \rho_A(G) \cap \rho_B(G)=A \cap B$, hence $\rho_{A\cap B}(G)=A \cap B$.
Consequently $\rho_A(B)=\rho_A(\rho_B(G))=\rho_{A \cap B}(G)=A \cap B$. Similarly, $\rho_B(A)=A \cap B$.

\begin{lem} \label{lem:retr_crit} Suppose that $A,B \leqslant G$ are retracts of $G$ such that the corresponding retractions $\rho_A,\rho_B \in {\rm End}(G)$ commute.
Then for arbitrary elements $x \in A$ and $y \in B$, $x$ is conjugate to $y$ in $G$ if and only if the following three conditions hold:
\begin{enumerate}
	\item\label{it:1} $\rho_A(y) \in x^A$ in $A$;
	\item\label{it:2} $\rho_B(x) \in y^B$ in $B$;
	\item\label{it:3} $\rho_{A\cap B}(y) \in \rho_{A\cap B}(x)^{A \cap B}$ in  $A\cap B$.
\end{enumerate}
\end{lem}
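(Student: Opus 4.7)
The plan is to handle necessity first, which is essentially formal. Suppose $y = g^{-1} x g$ for some $g \in G$. Applying $\rho_A$ to this equation and using $\rho_A(x) = x$ (since $x \in A$) yields $\rho_A(y) = \rho_A(g)^{-1} x \rho_A(g) \in x^A$, giving (1). Symmetrically, applying $\rho_B$ and using $\rho_B(y) = y$ gives (2). For (3), apply $\rho_{A\cap B} = \rho_A \circ \rho_B$, which is a homomorphism since both factors are, to obtain (3) at once.

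The reverse direction is where the real work lies, and I will set it up by first making a structural observation that simplifies (3) considerably. Since $y \in B$, condition (1) gives $\rho_A(y) = \rho_A(\rho_B(y)) = \rho_{A\cap B}(y)$, so in fact $\rho_A(y) \in A \cap B$ by Remark \ref{rem:inter_comm_retr}. Symmetrically, $\rho_B(x) = \rho_{A\cap B}(x) \in A \cap B$. Therefore condition (3) can be rewritten as $\rho_A(y) \in \rho_B(x)^{A \cap B}$.

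Assume (1), (2), (3) hold. Pick $a \in A$, $b \in B$ and $c \in A \cap B$ realizing the three conjugations:
\[
a^{-1} x a = \rho_A(y), \quad b^{-1} y b = \rho_B(x), \quad c^{-1} \rho_B(x) c = \rho_A(y).
\]
Substituting the second into the third gives $\rho_A(y) = (bc)^{-1} y (bc)$, and combining this with the first yields $a^{-1} x a = (bc)^{-1} y (bc)$. Rearranging produces $g^{-1} x g = y$ for $g = a c^{-1} b^{-1} \in G$, which is the desired conjugacy.

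The only nontrivial point in the argument is the observation in the second paragraph, that $\rho_A(y)$ and $\rho_B(x)$ both lie automatically in $A \cap B$; without this remark, condition (3) as stated would not obviously combine with (1) and (2) into a single equation. Once that is noticed, the remainder of the proof is a direct algebraic manipulation with the three chosen conjugators, and no further input beyond Remark \ref{rem:inter_comm_retr} is needed.
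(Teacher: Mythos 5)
Your proof is correct and follows essentially the same route as the paper: the necessity is the same formal application of the retractions, and the sufficiency hinges on the same key identities $\rho_{A\cap B}(x)=\rho_B(x)$ and $\rho_{A\cap B}(y)=\rho_A(y)$, after which you chain the three conjugacies (the paper invokes transitivity abstractly where you write out the conjugator $g=ac^{-1}b^{-1}$ explicitly). The only cosmetic quibble is that the identity $\rho_A(y)=\rho_{A\cap B}(y)$ follows from $y\in B$ alone, not from condition (1) as your wording suggests.
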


\begin{proof} Suppose that $y=g^{-1}xg$ for some $g \in G$. Applying $\rho_A$ to both sides of this equality we achieve $\rho_A(y)=\rho_A(g)^{-1} x \rho_A(g)$, thus $\rho_A(y) \in x^A$.
Similarly, $\rho_B(x) \in y^B$. Finally, \eqref{it:3} follows after applying $\rho_{A\cap B}$ to both sides of the above equality.

Assume, now, that the conditions \eqref{it:1}--\eqref{it:3} hold. Note that $\rho_{A\cap B}(x)=\rho_B(\rho_A(x))=\rho_B(x)$ as $x \in A$; similarly, $\rho_{A\cap B}(y)=\rho_A(y)$.
Then $x$ is conjugate to $\rho_A(y)=\rho_{A\cap B}(y)$, which is conjugate to $\rho_{A\cap B}(x)=\rho_B(x)$, which is conjugate to $y$ in $G$. Since
conjugacy is a transitive relation we can conclude that $y \in x^G$.
\end{proof}


\section{Parabolic subgroups and parabolic closures in Coxeter groups}
In this section we collect some of the basic facts about parabolic subgroups of Coxeter group that will be used in the rest of the paper.

Let $W$ be a Coxeter group with a fixed finite Coxeter generating set $S$. In this section we will remind some terminology and basic properties of $W$ and its parabolic subgroups.
A \textbf{reflection} of $W$ is an element conjugate to some $s \in S$.
Given $J \subseteq S$, we set $W_J = \la J \ra$.
{A subgroup of the form $W_J$ for some $J \subseteq S$ is called a \textbf{standard parabolic subgroup} of
$W$. {It is a standard fact that  $W_J$  is itself a Coxeter group with Coxeter generating set $J$.} A subgroup $P$ is called \textbf{parabolic} if it is conjugate to some standard parabolic subgroup $W_J$. }
The \textbf{rank} $\rank(P)$ of that parabolic subgroup is the cardinality of $J$.

{The following basic property of parabolic subgroups is crucial.

\begin{lem}\label{lem:ParabHered}
Let $P,Q$ be two parabolic subgroups of a Coxeter group $W$. Then  $P \cap Q$ is a parabolic subgroup with respect to the Coxeter group $Q$ {(and the natural Coxeter generating set of $Q$ coming  from $S$)}.
\end{lem}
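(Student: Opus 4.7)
The plan is to reduce the problem to the classical Kilmoyer--Solomon theorem on intersections of parabolic subgroups in a Coxeter system. The hypothesis and conclusion are invariant under conjugation in $W$: if $Q = gW_Jg^{-1}$ with $J \subseteq S$, then conjugation by $g^{-1}$ is an automorphism of $W$ carrying the Coxeter system $(Q, gJg^{-1})$ isomorphically onto $(W_J, J)$, and it sends parabolic subgroups of $W$ to parabolic subgroups of $W$. Applying this reduction, I may assume $Q = W_J$ is standard, and write $P = gW_Ig^{-1}$ with $I \subseteq S$ and $g \in W$.

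The heart of the matter is then the classical assertion that
\[
W_J \cap gW_Ig^{-1} \;=\; wW_Kw^{-1} \quad \text{for some } w \in W_J \text{ and } K \subseteq J.
\]
Such a subgroup is, by definition, parabolic in the Coxeter system $(W_J, J)$, so the lemma follows at once. To prove the assertion, I would invoke the distinguished representative $d$ of minimal length in the double coset $W_JgW_I$: writing $g = hdk$ with $h \in W_J$ and $k \in W_I$, one has the length-additivity $\ell(hdk) = \ell(h) + \ell(d) + \ell(k)$. Conjugating by $h^{-1} \in W_J$ reduces the problem to proving $W_J \cap dW_Id^{-1} = W_K$ for some $K \subseteq J$.

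For the latter, I would set $K := \{s \in J : d^{-1}sd \in I\}$; the inclusion $W_K \subseteq W_J \cap dW_Id^{-1}$ is immediate. For the reverse inclusion, the key step is to show that any $s \in J$ with $d^{-1}sd \in W_I$ already satisfies $d^{-1}sd \in I$: minimality of $d$ in $W_J d$ yields $\ell(sd) = \ell(d) + 1$, while minimality of $d$ in $dW_I$ together with $d^{-1}sd \in W_I$ yields $\ell(d \cdot d^{-1}sd) = \ell(d) + \ell(d^{-1}sd)$, which forces $\ell(d^{-1}sd) = 1$, i.e.\ $d^{-1}sd \in I$. A short induction on length then extends this from single reflections to the full intersection, giving $W_J \cap dW_Id^{-1} \subseteq W_K$.

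The main obstacle is not conceptual but technical: establishing the distinguished double-coset representative $d$ and its length-additivity, which are standard consequences of the strong exchange condition and the subword property for Coxeter systems. Once these tools are in hand, the argument is essentially bookkeeping, and the reduction to the standard case followed by the double-coset analysis cleanly yields parabolicity of $P \cap Q$ inside $Q$.
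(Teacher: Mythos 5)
Your argument is correct, but it takes a genuinely different route from the paper. The paper works geometrically with the Cayley graph viewed as a chamber system: it takes the residues $R_P$, $R_Q$ stabilised by $P$ and $Q$, forms the combinatorial projection $R'_P = \proj_{R_Q}(R_P)$, and uses standard properties of projections (from Tits's lecture notes) to show that the stabiliser of $R'_P$ is exactly $P \cap Q$; since $R'_P$ is a residue inside $R_Q$, its stabiliser is parabolic in $Q$. You instead reduce to the standard case $Q = W_J$, $P = gW_Ig^{-1}$, and run the classical Kilmoyer-type double-coset analysis: taking the distinguished minimal representative $d$ of $W_J g W_I$ and proving $W_J \cap dW_Id^{-1} = W_K$ with $K = J \cap dId^{-1}$. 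Your single-reflection step is right (minimality of $d$ on both sides forces $\ell(d^{-1}sd)=1$, and a length-one element of $W_I$ lies in $I$), and the induction you allude to goes through, though the step you compress most --- showing that if $\ell(sw)<\ell(w)$ for $w \in W_J \cap dW_Id^{-1}$ then already $d^{-1}sd \in W_I$ --- genuinely needs the strong exchange condition applied to a reduced expression of $du$, so it is worth being explicit there. The trade-off: your approach is purely combinatorial and self-contained modulo standard Coxeter-system facts (exchange condition, double-coset representatives), and it yields the sharper explicit description $W_K$ of the intersection; the paper's approach is shorter given the chamber-system machinery it already uses elsewhere, and the projection argument transfers to more general residue/building settings. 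Both establish the lemma.
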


\begin{proof}
We recall that the Cayley graph of a Coxeter group may be viewed as a \textbf{chamber system}; this fact, as well as a basic introduction
to chamber systems, can be found in \cite{Weiss}.
We recall that parabolic subgroups in a Coxeter group are exactly the stabilisers of the residues.
Given parabolic subgroups $P \leqslant Q$, let $R_P$ and $R_Q$ be the residues whose stabilisers are precisely $P$ and $Q$.
Then the combinatorial projection $R'_P = \proj_{R_Q}(R_P)$ of $R_P$ on $R_Q$ is a residue
stabilised by $P \cap Q$ {(see \cite[Prop.~2.29]{TitsLN})}.
Moreover, properties of the combinatorial projection imply that if a reflection stabilises $R'_P$, then it also stabilises $R_P$.
It follows that the stabiliser of $R'_P$ also stabilises $R_P$, since it is generated by reflections. Finally, since $R'_P$ is contained in $R_Q$, the
stabiliser of $R'_P$ is also contained in $Q$ 
This shows that the stabiliser of $R'_P$ equals $P \cap Q$. Thus $P \cap Q$ is a parabolic
subgroup in the Coxeter group $Q$, as claimed.
\end{proof}

}

Lemma~\ref{lem:ParabHered} implies that any intersection of parabolic subgroups is a parabolic subgroup.
In particular any subset $H$ of $W$ is contained in a unique minimal parabolic subgroup, called the \textbf{parabolic closure} of $H$.
Moreover, if $P , Q$ are parabolic subgroups such that $P$ is properly contained in $Q$, then the rank of $P$ is strictly smaller than the rank of
$Q$ ({for the definition and basic properties of parabolic closures,} see \cite[\S2.1]{Krammer}).

\begin{lem}\label{lem:PcFinite}
Let $H \leqslant W$ be a finite subgroup generated by $n$ reflections. Then $\Pc(H)$ is a finite parabolic subgroup of rank~$\leq n$.
\end{lem}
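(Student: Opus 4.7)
My plan is to handle the two assertions—finiteness of $\Pc(H)$ and the rank bound—separately. Let $r_1, \dots, r_n$ be reflections of $W$ generating $H$, and set $P := \Pc(H)$.

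For finiteness of $P$, I would invoke the classical theorem that every finite subgroup of a Coxeter group is contained in some finite (``spherical'') parabolic subgroup; this can be derived from the $\mathrm{CAT}(0)$ geometry of the Davis complex combined with the Bruhat--Tits fixed-point theorem applied to vertex stabilisers, or cited directly from \cite{Davis}. If $P'$ is such a finite parabolic containing $H$, then minimality of $\Pc(H)$ forces $P \subseteq P'$, so $P$ is finite.

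For the rank bound I would work inside $P$ itself. Lemma~\ref{lem:ParabHered} identifies the parabolic subgroups of $P$ (with respect to its intrinsic Coxeter structure) with the parabolic subgroups of $W$ contained in $P$; hence the equality $\Pc(H) = P$ means that $H$ is not contained in any proper parabolic subgroup of $P$. Writing $r := \rank(P)$, I would pass to the canonical reflection representation of the finite Coxeter group $P$ on a real vector space $V$ of dimension $r$, on which $P$ acts essentially, i.e.~$V^P = \{0\}$. Since each $r_i$ is a reflection of $W$ lying in the parabolic subgroup $P$, it is also a reflection of $P$ (the reflections of $P$ coincide with the reflections of $W$ that it contains, a standard fact about parabolic subgroups); thus $r_i$ fixes a hyperplane $V_i \subseteq V$, and consequently the fixed subspace $V^H = V_1 \cap \dots \cap V_n$ has codimension at most $n$ in $V$.

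The crux is to rule out $V^H \neq \{0\}$, and for this I would apply Steinberg's fixed-point theorem for finite reflection groups: the stabiliser $\Stab_P(v)$ of any $v \in V$ is a parabolic subgroup of $P$, generated by the reflections of $P$ whose mirrors contain $v$. If $V^H$ contained a nonzero vector $v$, then $H$ would sit inside $\Stab_P(v)$, a \emph{proper} parabolic of $P$ (proper because $P$ acts essentially and hence fixes no nonzero vector), contradicting $\Pc(H) = P$. Therefore $V^H = \{0\}$, and comparing codimensions gives $r \leq n$. The main subtlety to watch is the identification of reflections of $W$ contained in $P$ with the reflections of $P$ in its own Coxeter structure; this is standard but must be invoked carefully so that each $r_i$ is genuinely a hyperplane reflection on $V$.
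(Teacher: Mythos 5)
Your proof is correct, and at bottom it is the same geometric idea as the paper's: $n$ reflections fix $n$ hyperplanes, so their common fixed set has codimension at most $n$, and the stabiliser of that fixed set is a parabolic subgroup of rank at most $n$ containing $H$. The difference is in the realization. The paper works in the spherical Coxeter complex $\Sigma$ of the finite group, picks a simplex $\sigma$ of codimension $d\le n$ in the fixed subcomplex, and directly exhibits $\Stab_W(\sigma)$ as a parabolic of rank $d\le n$ containing $H$; minimality of $\Pc(H)$ then gives the rank bound. You instead work linearly inside $P=\Pc(H)$ with its reflection representation and argue by contradiction via Steinberg's fixed-point theorem: if $V^H\neq\{0\}$ then $H$ lies in a proper parabolic point stabiliser of $P$. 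Since $\Sigma$ is essentially the unit sphere of $V$ with its wall triangulation, and simplex stabilisers are point stabilisers of interior points, the two arguments are interchangeable; yours trades the chamber-complex language for Steinberg's theorem and the essentiality of the reflection representation, at the cost of an indirect argument. Two small points to tidy up: the direction of Lemma~\ref{lem:ParabHered} you actually need is that a parabolic subgroup of $P$ (for its induced Coxeter structure) is a parabolic subgroup of $W$ --- that is immediate from the definition of parabolic subgroups as conjugates of standard ones, whereas the lemma as stated gives the converse implication; and the identification of reflections of $W$ lying in $P$ with reflections of $P$, which you rightly flag, is indeed the standard root-subsystem fact and is needed to know each $r_i$ acts on $V$ as a genuine hyperplane reflection.
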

\begin{proof}
The fact that $\Pc(H)$ is finite is well-known, see \cite[Ch.~V, \S4, Exercice~2.d]{Bourbaki}. Now it suffices to show that in a finite Coxeter group $W$, a
reflection subgroup generated by $n$ reflections is contained in a parabolic subgroup of rank~$n$. Let $\Sigma$ be the geometric realization of
the Coxeter complex of $W$ (see \cite[Ch.~2]{TitsLN} or \cite[Ch.~3]{AB} for Coxeter complexes). Each reflection fixes pointwise a hyperplane of the sphere $\Sigma$.
Thus $H$ fixes a subcomplex
$\Sigma'$ of codimension~$d \leq n$. Let $\sigma \subset \Sigma'$ be a simplex of codimension $d$. Then $H $ is contained in $P = \Stab_W(\sigma)$
and $P$ is a parabolic subgroup of rank $d \leq n$.
\end{proof}

{
Let  $J \subseteq S$. We set $J^\perp = \{s \in S \setminus J \; | \; sj = js \text{ for all } j\in J\}$.
The set $J$ is called \textbf{spherical} if $W_J$ is finite. The set} $ J \subseteq S$ is called \textbf{irreducible} if for every non-empty subset $I \subsetneq J$, we have $J \not \subset I \cup I^\perp$;
equivalently the parabolic subgroup $W_J$ does not split   as a direct product of proper parabolic subgroups. {It is a fact that if {an infinite Coxeter group $W$ admits an irreducible Coxeter generating set $S$, then any
other Coxeter generating set of $W$ is also irreducible.} (If $W$ is finite, this is, however, not the case, since a dihedral group of order $12$ is the direct product of a group of order $2$ and a dihedral group of order $6$.)
Thus, in that case, it makes sense to say that $W$ itself is \textbf{irreducible}.}

\begin{lem}\label{lem:Centra}
Let $J \subseteq S$ be irreducible and non-spherical.

\begin{enumerate}[(i)]
\item $\norma_W(W_J) = W_{J \cup J^\perp}$ and $\centra_W(W_J) = W_{J^\perp}$.

\item If $w Jw\inv \subset S$ for some $w \in W$, then $wJw\inv = J$.

\item If  $J^\perp = \varnothing$, then every parabolic subgroup of $W$ containing $W_J$ is standard.

\end{enumerate}
\end{lem}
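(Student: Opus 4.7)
The plan is to prove the three assertions in sequence, with (i) providing the main structural input for (iii). For (i), the inclusions $W_{J^\perp} \subseteq \centra_W(W_J) \subseteq \norma_W(W_J)$ and $W_J \subseteq \norma_W(W_J)$ are immediate from the definitions, yielding $W_{J \cup J^\perp} \subseteq \norma_W(W_J)$. For the reverse inclusion on the normaliser I would invoke the classical structure theorem of Deodhar (or Brink--Howlett), which decomposes $\norma_W(W_J)$ as $W_J$ extended by a group of ``diagram twists''; under the irreducibility and non-sphericity hypotheses on $J$, these twists can only arise from generators commuting with all of $J$, so the complement is precisely $W_{J^\perp}$, giving $\norma_W(W_J) \subseteq W_{J \cup J^\perp}$. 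For the centraliser, any $g \in \centra_W(W_J)$ lies in $\norma_W(W_J) = W_{J \cup J^\perp}$, and since $J$ commutes elementwise with $J^\perp$, this is the direct product $W_J \times W_{J^\perp}$. Writing $g = g_1 g_2$ accordingly, $g_1 \in W_J \cap \centra_W(W_J) = Z(W_J) = \{1\}$, using that an infinite irreducible Coxeter group has trivial centre.

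For (ii), I would appeal to the rigidity theorem for standard parabolic subgroups in the irreducible non-spherical setting (see \cite{Krammer}): two conjugate standard parabolic subgroups of $W$, one of which is irreducible non-spherical, must coincide as subgroups of $W$. Setting $J' := wJw\inv \subseteq S$, the subgroup $W_{J'} = wW_Jw\inv$ is standard parabolic and conjugate to $W_J$, so rigidity forces $W_{J'} = W_J$. Combined with the elementary fact that $S \cap W_K = K$ for every $K \subseteq S$ (a consequence of Tits' theorem on reduced words), this gives $J' = S \cap W_{J'} = S \cap W_J = J$, as desired.

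For (iii), suppose $J^\perp = \varnothing$, and let $P \supseteq W_J$ be a parabolic subgroup of $W$, written as $P = gW_Kg\inv$ with $K \subseteq S$. Then $g\inv W_J g \subseteq W_K$, and by Lemma~\ref{lem:ParabHered} applied to the parabolic subgroups $g\inv W_J g$ and $W_K$ of $W$, the subgroup $g\inv W_J g$ is parabolic in $W_K$. Hence $g\inv W_J g = hW_Lh\inv$ for some $L \subseteq K$ and $h \in W_K$. Setting $w := gh$, we obtain $wW_Lw\inv = W_J$ with $L \subseteq S$; the rigidity input of (ii) then forces $L = J$, and in particular $J \subseteq K$. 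Thus $w$ normalises $W_J$, so by (i) and the vanishing of $J^\perp$, $w \in \norma_W(W_J) = W_J$. Since $h \in W_K$ and $J \subseteq K$, we deduce $g = wh\inv \in W_J \cdot W_K \subseteq W_K$, so $P = gW_Kg\inv = W_K$ is standard.

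The main obstacle is the rigidity input used in (ii) and propagated into (iii): proving that two conjugate irreducible non-spherical standard parabolic subgroups coincide requires a nontrivial combinatorial argument, typically via minimal-length double coset representatives as in \cite{Krammer} or via the geometry of the Davis complex. Once this is in hand, part (i) follows from the structure theorem for normalisers together with the triviality of the centre of an infinite irreducible Coxeter group, and part (iii) is a clean application of Lemma~\ref{lem:ParabHered} combined with (i) and (ii).
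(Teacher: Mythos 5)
Your proposal is correct, and for parts (i) and (ii) it is at the same level as the paper, which simply cites \cite{Deodhar} and \cite[\S 3.1]{Krammer} for these facts (your sketch of (i) via the normaliser structure theorem plus triviality of the centre of an infinite irreducible Coxeter group is a reasonable unpacking of that citation). Where you genuinely diverge is part (iii). The paper argues geometrically: viewing the Cayley graph as a chamber system, it shows that any wall separating two residues with stabiliser $W_J$ would give a reflection commuting with $W_J$ (via a lemma of Caprace--Marquis on walls lying in a bounded neighbourhood of a residue), so $J^\perp = \varnothing$ forces $\Res_J(1)$ to be the \emph{unique} residue with stabiliser $W_J$, whence every residue whose stabiliser contains $W_J$ contains $\Res_J(1)$ and is therefore standard. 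Your argument is instead purely algebraic: reduce $g\inv W_J g \leqslant W_K$ to $g\inv W_Jg = hW_Lh\inv$ via Lemma~\ref{lem:ParabHered}, identify $L = J$ by rigidity, and then use $\norma_W(W_J) = W_J$ from (i) to force $g \in W_K$. This is shorter and avoids the chamber-system machinery, at the cost of one extra (standard) input you elide: statement (ii) as written only applies when the \emph{set} $J$ is conjugated into $S$, whereas you only know the \emph{groups} $W_L$ and $W_J$ are conjugate; to conclude $L = J$ you need Deodhar's theorem that conjugate standard parabolic subgroups have conjugate defining subsets, which belongs to the same circle of cited results. The paper's route, in exchange for being longer, yields the stronger and independently useful intermediate statement about uniqueness of the residue stabilised by $W_J$.
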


\begin{proof}
For (i) and (ii), see \cite{Deodhar} or \cite[\S 3.1]{Krammer}. Assertion (iii) is well known to the experts and can be deduced from (i).
By lack of an appropriate reference, we provide a proof. To this end, we  view the Cayley graph $X$ of $W$ with respect to $S$  as a \emph{chamber system}
(see \cite[\S 5.2]{AB} for the definition of chamber systems and the associated terminology).

For each $I \subseteq S$, the parabolic subgroup $W_I$ is the stabiliser in $W$ of the $I$-residue of $X$ containing the base chamber $1$, which is denoted by $\Res_I(1)$.

Let now $R$ and $R'$ be two residues whose stabiliser in $W$ is $W_J$. For every wall $\mathcal W$ crossed by a minimal gallery joining a chamber in $R$ to its projection to $R'$,
the wall $\mathcal W$ does not cross $R'$ (by properties of the projection) and, hence, the associated reflection $r_{\mathcal W}$ does not stabilise $R'$.
Since $R$ and $R'$ have the same stabiliser, we infer that $\mathcal W$ does not cross $R$ either. This proves that every wall crossed by a gallery
joining a chamber in $R$ to its projection to $R'$, separates $R$ from $R'$. It follows that such a wall  $\mathcal W$ is contained in a bounded neighborhood
of $R$. Therefore the reflection $r_{\mathcal W}$ commutes with $W_J$ by \cite[Lemma~2.20]{CaMa}. From (i) and the hypothesis that $J^\perp$ is empty, we
infer that there is no wall separating $R$ from $R'$. In other words $\Res_J(1)$ is the unique residue in $X$ whose stabiliser is $W_J$.

Let now $P$ be a parabolic subgroup containing $W_J$. Then $P$ is the stabiliser of some residue $R$. Since $W_J \leqslant P$ it follows that $R$ contains a residue
whose stabiliser is $W_J$. Thus $R$ contains $\Res_J(1)$ by what we have just proved. It follows that $R$ is of the form $R = \Res_{J \cup J'}$ for some
$J' \subseteq S \setminus J$ which implies that $P$ is indeed standard.
\end{proof}

\begin{lem}\label{lem:CoxeterElement}
Let $J = \{s_1, \dots, s_n\} \subseteq S$ and denote by $w = s_1 s_2 \dots s_n$ the product of all elements of $J$ (ordered arbitrarily). Then $\Pc(w) = W_J$.
\end{lem}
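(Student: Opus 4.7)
The plan is to establish the nontrivial inclusion $W_J \subseteq \Pc(w)$ by first reducing to the Coxeter system $(W_J, J)$, then splitting off direct factors to reduce to the irreducible case, and finally invoking the classical fact that a Coxeter element of an irreducible Coxeter group is not contained in any proper parabolic subgroup.

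First, I would record that $\Pc(w) \subseteq W_J$ is immediate because $w \in W_J$ and $W_J$ is itself parabolic in $W$. For the converse, applying Lemma~\ref{lem:ParabHered} to the pair $(\Pc(w), W_J)$ shows that $\Pc(w)$ is also parabolic in the Coxeter system $(W_J, J)$, so I may replace $(W, S)$ by $(W_J, J)$ without loss of generality. The statement then reduces to proving $\Pc(s_1 s_2 \cdots s_n) = W$ for a Coxeter system $(W, S)$ with $S = \{s_1, \dots, s_n\}$, and I would proceed by induction on $n$, the base case $n \leq 1$ being trivial.

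For the inductive step, I would first dispatch the case where $S$ is reducible: if $S = S_1 \sqcup S_2$ with $[W_{S_1}, W_{S_2}] = 1$, so $W = W_{S_1} \times W_{S_2}$, then sliding commuting generators past each other rewrites $w = w_1 w_2$, where $w_i$ is the product (in the induced order) of the elements of $S_i$. A direct verification using the factorisation $g = g_1 g_2$ with $g_i \in W_{S_i}$ shows that every parabolic subgroup of $W$ decomposes as $P_1 \times P_2$ with $P_i$ parabolic in $W_{S_i}$. Applying this to the smallest parabolic containing $w_1 w_2$ forces $\Pc(w) = \Pc_{W_{S_1}}(w_1) \times \Pc_{W_{S_2}}(w_2)$, which by the inductive hypothesis equals $W_{S_1} \times W_{S_2} = W$.

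The remaining case, where $S$ is irreducible, is the one I expect to carry the main difficulty. Here I would invoke the classical fact that in the standard geometric (Tits) representation $V$ of $W$, the Coxeter element $c = s_1 \cdots s_n$ fixes only the radical $V^W$ of the Tits bilinear form. Since any proper parabolic $g W_I g\inv$ with $I \subsetneq S$ pointwise fixes the subspace $g \cdot V^{W_I}$, which strictly contains $V^W$ by irreducibility of $S$, such a parabolic cannot contain $c$; hence $\Pc(c) = W$. For finite irreducible Coxeter groups this is the well-known regularity of Coxeter elements, whose eigenvalues are primitive roots of unity indexed by the exponents and in particular avoid $1$; the extension to infinite irreducible Coxeter groups follows from the analogous analysis of the Tits form and can be quoted from Bourbaki, Ch.~V, or from Krammer's thesis.
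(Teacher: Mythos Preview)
The paper does not give an argument here; it simply cites Theorem~3.4 in \cite{Paris} and Corollary~4.3 in \cite{CapraceFujiwara}. Your reduction to $(W_J,J)$ via Lemma~\ref{lem:ParabHered} and the handling of the reducible case by splitting into direct factors are both correct, and indeed this is the shape of the argument in the cited sources.

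There is, however, a genuine gap in your treatment of the irreducible case. The assertion that $g\cdot V^{W_I}$ \emph{strictly} contains the radical $V^W$ whenever $I\subsetneq S$ and $S$ is irreducible is false as stated. In the irreducible \emph{affine} case the radical $V^W$ is one-dimensional, and for any maximal proper $I\subset S$ one has $V^{W_I}=V^W$. Already for $\tilde A_1$ (so $B(\alpha_1,\alpha_2)=-1$) one computes that $V^{\la s_1\ra}=\{v:B(v,\alpha_1)=0\}$ is the line spanned by $\alpha_1+\alpha_2$, which is exactly $V^W$. Thus comparing fixed subspaces in $V$ does not exclude the Coxeter element from a maximal proper parabolic in the affine situation, and the Bourbaki reference you invoke does not cover infinite groups.

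The repair is short. For irreducible $S$, if the Tits form $B$ is degenerate then a Perron--Frobenius argument forces $B$ to be positive semidefinite, i.e.\ $W$ is affine; hence in the irreducible non-affine case $V^W=0$ and your fixed-space comparison goes through verbatim. In the remaining irreducible affine case, every proper parabolic subgroup is finite while the Coxeter element has infinite order, so $c$ cannot lie in any proper parabolic. With this case distinction inserted, your proof is complete and amounts to an unpacking of the references the paper cites.
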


\begin{proof}
See Theorem~3.4 in \cite{Paris} or Corollary~4.3 in \cite{CapraceFujiwara}.
\end{proof}

{

\section{Conjugacy separability in Coxeter groups}
Let $W$ be an even Coxeter group with a Coxeter generating set $S$. Clearly, for every $I \subseteq S$ there is a \textbf{canonical retraction}
$\rho_I \in {\rm End(W)}$ of $W$ onto the standard parabolic subgroup $W_I$,
defined by $\rho_I(s):=s$ for all $s \in I$ and $\rho_I(t):=1$ for all $t\in S \setminus I$. It is also obvious that for any other subset
$J \subseteq S$, the retractions $\rho_I$ and $\rho_J$ commute.

\begin{prop} \label{prop:f_o_in_Cox-cd} If $W$ is an even Coxeter group of finite rank then every finite order element is conjugacy distinguished in $W$.
\end{prop}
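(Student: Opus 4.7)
The plan is to combine the retraction criterion of Lemma~\ref{lem:retr_crit} with the canonical retractions of the even Coxeter group $W$ onto its standard parabolic subgroups. Evenness of $W$ is essential here: it ensures that each canonical retraction $\rho_I \colon W \to W_I$ is a well-defined homomorphism, and that $\rho_I$ and $\rho_J$ commute with composition $\rho_I \circ \rho_J = \rho_{I \cap J}$, the retraction onto $W_I \cap W_J = W_{I \cap J}$.

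Let $x \in W$ be a finite-order element. By the classical fact that every finite subgroup of a Coxeter group is contained in a spherical standard parabolic subgroup (see \cite[Thm.~12.3.4]{Davis}), after replacing $x$ by a $W$-conjugate I may assume $x \in W_I$ for some spherical $I \subseteq S$; this does not affect whether $x^W$ is separable in $W$. Given $y \in W \setminus x^W$, my task is to produce a finite quotient $\phi \colon W \to F$ with $\phi(y) \notin \phi(x)^F$.

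If $y$ has infinite order, let $n = |x|$, so $y^n \neq 1$. Residual finiteness of $W$ (Coxeter groups are linear by Tits) supplies a finite-index normal subgroup $N \triangleleft W$ with $y^n \notin N$. In $W/N$ the image $\bar x$ satisfies $\bar x^n = 1$, so its order divides $n$, whereas $\bar y^n \neq 1$ means the order of $\bar y$ does not divide $n$. Their orders differ, so their conjugacy classes in $W/N$ are disjoint.

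If instead $y$ has finite order, the same classical result lets me conjugate $y$ in $W$ (again without affecting the separability question) to arrange $y \in W_J$ for some spherical $J \subseteq S$. Lemma~\ref{lem:retr_crit}, applied to $A = W_I$ and $B = W_J$, provides three conditions whose conjunction is equivalent to $W$-conjugacy of $x$ and $y$; since $y \notin x^W$, at least one of them must fail. In each of the three scenarios, the corresponding retraction---one of $\rho_I$, $\rho_J$, or $\rho_{I \cap J}$---is a homomorphism from $W$ to a finite group (as $I$, $J$, and $I \cap J$ are all spherical) in which the images of $x$ and $y$ are not conjugate. The main subtlety is organizational: one must arrange both $x$ and $y$ inside finite standard parabolic subgroups for Lemma~\ref{lem:retr_crit} to produce \emph{finite} quotient separators, which relies on the classical placement theorem together with commutativity of the canonical retractions, itself guaranteed by evenness.
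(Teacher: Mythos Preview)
Your proof is correct and follows essentially the same approach as the paper's: conjugate $x$ and $y$ into finite standard parabolics $W_I$ and $W_J$, handle the infinite-order case by residual finiteness and an order argument, and handle the finite-order case by applying Lemma~\ref{lem:retr_crit} so that one of the retractions $\rho_I$, $\rho_J$, $\rho_{I\cap J}$ lands in a finite group and separates the images. Your treatment of the infinite-order case (choosing $N$ with $y^n \notin N$ so that the order of $\bar y$ does not divide $n$) is a slightly cleaner variant of the paper's phrasing, but the substance is identical.
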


\begin{proof}
Let $x \in W$ be an element of finite order.
By Lemma \ref{lem:PcFinite} we can assume that $x \in W_I$ for some $I \subseteq S$ such that $|W_I|<\infty$.
Consider any $y \in W \setminus x^W$. If $y$ has infinite order then, since W is residually finite (as any finitely generated linear group -- see \cite{Malcev}),
there is a finite group $F$ and a homomorphism $\alpha:W \to F$ such that
the order of $\alpha(y)$ in $F$ is greater than the order of $x$ in $W$. Clearly this implies that $\alpha(y) \notin \alpha(x)^F$.

Thus we can suppose that $y$ has finite order, and so, by Lemma \ref{lem:PcFinite}, $y$ is conjugate in $W$ to an element $W_J$ for some
$J \subseteq S$ with $|W_J|<\infty$. Without loss of generality, we can replace $y$ with its conjugate to assume that $y \in W_J$.
In view of Remark \ref{rem:inter_comm_retr}, we see that
$W_I\cap W_J =\rho_{I\cap J}(W)=W_{I \cap J}$. So, since $y \notin x^W$, Lemma \ref{lem:retr_crit} tells us that that either
$\rho_I(y) \notin x^{W_I}$ in $W_I$, or $\rho_J(x) \notin y^{W_J}$ in $W_J$,
or $\rho_{I \cap J}(y) \notin \left(\rho_{I\cap J}(x)\right)^{W_{I\cap J}}$ in $W_{I\cap J}$. Let us assume that $\rho_I(y) \notin x^{W_I}$ in $W_I$,
as the other two cases are similar.
As $x \in W_I$, we have $x=\rho_I(x)$, thus $\rho_I:W \to W_I$ is the homomorphism from $W$ to a finite group $W_I$, distinguishing the conjugacy
classes of the images of $x$ and $y$.
Hence $x \in W$ is conjugacy distinguished.
\end{proof}

Recall that an element $x$ of a Coxeter group $W$ is called \textbf{essential} if $\pc(x)=W$.  {Remark that this notion refers to a specific Coxeter generating set of $W$.}

\begin{lem}
\label{lem:essential-power} Suppose that $W$ is an infinite irreducible Coxeter group of
finite rank and $x \in W$ is an essential element. Then for every $m \in \N$, $x^m$ is also an essential element of $W$.
\end{lem}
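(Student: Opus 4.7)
The plan is to proceed by contradiction: assume $P := \pc(x^m) \subsetneq W$, and derive that $\pc(x) \subsetneq W$, contradicting the essentiality of $x$. First, since $\pc(x) = W$ is infinite, Lemma~\ref{lem:PcFinite} forces $x$ to have infinite order; consequently $x^m$ has infinite order too, and $P$ is a non-trivial infinite parabolic subgroup. Because $x$ commutes with $x^m$, we have $xPx\inv = \pc(xx^mx\inv) = P$, so $x$ lies in the normaliser $N_W(P)$.

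Next, after conjugating $x$ (which preserves essentiality and the parabolic closures of its powers up to conjugation), I may assume $P = W_J$ is a standard parabolic with $J \subsetneq S$. By replacing $m$ with $md$, where $d$ is the order of the finite direct factor of $W_J$ coming from its spherical irreducible components---and iterating this reduction if necessary, since the rank of the resulting parabolic strictly decreases as long as a spherical component is present---I may further arrange that every irreducible component $J_1, \dots, J_k$ of $J$ is non-spherical. Each $W_{J_i}$ is then an infinite irreducible Coxeter group of rank strictly less than $|S|$, setting up an argument by induction on the rank of $W$.

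Conjugation by $x$ permutes the set $\{J_1, \dots, J_k\}$ via some permutation $\sigma$, whose order $\ell$ divides $m$ (because $x^m \in W_J$ acts trivially on the decomposition into irreducible direct factors). In the easy case $\sigma = \mathrm{id}$, the element $x$ normalises each $W_{J_i}$, so Lemma~\ref{lem:Centra}(i) yields
\[
x \in \bigcap_{i=1}^k N_W(W_{J_i}) = \bigcap_{i=1}^k W_{J_i \cup J_i^\perp} = W_{J \cup J^\perp},
\]
where $J^\perp = \{s \in S \setminus J \mid s \text{ commutes with every element of } J\}$. The irreducibility of $W$ together with $\varnothing \neq J \subsetneq S$ rules out $J \cup J^\perp = S$ (since that would decompose $W$ non-trivially as $W_J \times W_{J^\perp}$), so $J \cup J^\perp \subsetneq S$, and $x$ lies in the proper parabolic $W_{J \cup J^\perp}$, contradicting $\pc(x) = W$.

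The main obstacle is the case $\sigma \neq \mathrm{id}$, where $x$ non-trivially permutes the non-spherical irreducible components of $J$. Here I argue by induction on $|S|$, the base case $|S| = 2$ (the infinite dihedral group, whose essentials are precisely the translations $(st)^n$) being handled directly. For the inductive step, the key observation is that $x^\ell$ preserves each $J_i$, so the trivial-permutation analysis applied to $x^\ell$ shows $x^\ell \in W_{J \cup J^\perp} \subsetneq W$; hence $x^\ell$ is not essential. Since $\ell$ is bounded in terms of $|S|$ (it is the order of a permutation on $k \leq |S|-1$ letters) while $\ell$ divides $m$, the inequality $\ell < m$ holds for all sufficiently large $m$, and a strong induction on $m$---combined with a careful analysis of the cycle structure of $\sigma$ using the inductive hypothesis applied to the smaller-rank Coxeter groups $W_{J_i}$ (noting that essentiality of $x^n$ forces essentiality of $x$ since $\pc(x^n) \subseteq \pc(x)$)---closes the remaining small cases. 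The crux of the proof is managing the interplay between the permutation $\sigma$ and the combinatorics of reducible standard parabolic subgroups and their normalisers.
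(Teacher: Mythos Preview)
Your opening is exactly the paper's: you correctly observe that $x$ has infinite order (via Lemma~\ref{lem:PcFinite}) and that $x$ normalises $P = \pc(x^m)$. At this point the paper finishes in one line by invoking a result of Krammer \cite[Lemma~6.8.1]{Krammer}: since $x \in N_W(P)$ is essential, $\pc(N_W(P)) = W$, and Krammer's lemma then forces $P$ to be finite or equal to $W$; the first is excluded because $x^m \in P$ has infinite order.

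Your attempt to bypass Krammer's lemma by a direct structural analysis has a genuine gap. The reduction ``replace $m$ by $md$ to kill spherical components'' \emph{increases} $m$, so it cannot be combined with a strong induction on $m$: after the reduction you no longer have access to the inductive hypothesis for the original exponent. More seriously, the case $\sigma \neq \mathrm{id}$ is not actually handled. You show $x^\ell \in W_{J \cup J^\perp}$, but when $\ell = m$ this is vacuous (you already knew $x^m \in W_J$), and nothing you have written rules out $\ell = m$. The phrase ``a careful analysis of the cycle structure of $\sigma$ using the inductive hypothesis applied to the smaller-rank Coxeter groups $W_{J_i}$'' is not an argument: you never specify which element of $W_{J_i}$ the rank-induction applies to, nor how essentiality in $W_{J_i}$ relates to essentiality in $W$.

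In fact the $\sigma \neq \mathrm{id}$ case is vacuous: two distinct irreducible non-spherical subsets of $S$ can never generate conjugate standard parabolics (this follows from the Deodhar--Howlett description of conjugacy of standard parabolics together with Lemma~\ref{lem:Centra}(ii)), so conjugation by $x$ must fix each $W_{J_i}$. Had you established this, your $\sigma = \mathrm{id}$ argument would complete the proof and would amount to an elementary re-derivation of the relevant special case of Krammer's lemma. As written, however, the proof is incomplete.
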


\begin{proof} Since every element of finite order is contained in a finite parabolic subgroup (Lemma \ref{lem:PcFinite}) and $|W|=\infty$, we
see that, being an essential element, $x$ must have infinite order.
Let $P:=\pc(x^m) \leqslant W$.  {According to Lemma \ref{lem:ParabHered}}, $x^{-1}Px \cap P$ is a parabolic subgroup containing $\gen{x^m}$, hence
$x^{-1}Px \cap P=P$ by the minimality of $P$, implying that $P \subseteq x^{-1}Px$. Similarly, $P \subseteq xPx^{-1}$, hence $P=x^{-1}Px$, i.e.,
$x$ belongs to the normalizer $N_W(P)$ of $P$ in $W$. Since $x$ is essential in $W$, it follows that $\pc(N_W(P))=W$. By a result of Krammer
\cite[Lemma 6.8.1]{Krammer}, the latter implies that either $|P|<\infty$ or $P=W$. But $P$ cannot be finite since $x^m \in P$ has infinite order, therefore
$P=W$, thus $x^m$ is essential in $W$.
\end{proof}

A Coxeter group is said to be \textbf{affine} if it isomorphic to a Euclidean reflection group. The following statement was proved by Krammer in \cite[Lemma 6.3.10]{Krammer}:

\begin{lem}\label{lem:Krammer} Let $W$ be an infinite, irreducible and non-affine Coxeter group of finite rank. If $x \in W$ is an essential element then $\gen{x}$
has finite index in the centralizer
$C_W(x)$ of $x$ in $W$.
\end{lem}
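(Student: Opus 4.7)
The plan is to view $x$ as a hyperbolic isometry of the Davis--Moussong complex $\Sigma$ of $W$, which is a proper cocompact $\mathrm{CAT}(0)$ $W$-space, and to use the geometry of its axis to constrain the centralizer $C_W(x)$.

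First, I would observe that $x$ must have infinite order: otherwise Lemma~\ref{lem:PcFinite} would place $\pc(x)$ inside a finite, hence proper, parabolic subgroup of the infinite group $W$, contradicting essentiality. For the same reason $x$ cannot act on $\Sigma$ with a fixed point, since any elliptic isometry of $\Sigma$ lies in a cell stabiliser, which is a finite parabolic subgroup. Hence $x$ acts hyperbolically on $\Sigma$, with some translation axis $\ell \subset \Sigma$.

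By the Flat Strip Theorem, the minset $\mathrm{Min}(x)$ splits as a product $Y \times \mathbb{R}$, where $Y$ is a complete $\mathrm{CAT}(0)$ space and $x$ translates along the $\mathbb{R}$-factor while fixing $Y$ pointwise. The centralizer $C_W(x)$ preserves both factors of this decomposition, yielding a homomorphism $\pi \colon C_W(x) \to \mathrm{Isom}(Y)$. Elements of $\ker(\pi)$ fix $Y$ pointwise and, commuting with $x$, act on each line $\{y\} \times \mathbb{R}$ as isometries of $\mathbb{R}$ that commute with a nontrivial translation, hence as translations themselves. Proper discontinuity of the $W$-action on $\Sigma$ then forces this group of translations to be an infinite cyclic subgroup of $\mathbb{R}$ containing $\gen{x}$ with finite index, so that $\gen{x}$ has finite index in $\ker(\pi)$. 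The problem thus reduces to showing that the image $\pi(C_W(x))$ is finite.

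Since $Y$ is a complete $\mathrm{CAT}(0)$ space, by the Bruhat--Tits fixed point theorem this amounts to proving that $C_W(x)$ has bounded orbits on $Y$. If orbits were unbounded, one could extract an element $h \in C_W(x)$ whose image acts hyperbolically on $Y$; then $\gen{x,h}$ would act as a cocompact lattice on a $2$-dimensional flat $F \subset \mathrm{Min}(x)$. The walls of $\Sigma$ meeting $F$ would then generate a reflection subgroup whose parabolic closure $P \leqslant W$ is an affine parabolic subgroup containing $x$; since $\pc(x) = W$, this forces $P = W$ and hence $W$ itself to be affine, contradicting the hypothesis.

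The main obstacle is the last step: the passage from a $2$-flat in $\Sigma$ to an affine parabolic subgroup of $W$. This relies on the detailed dictionary, developed in Krammer's thesis, between $k$-dimensional flats in the Davis complex and affine parabolic subgroups of rank $k+1$, and is precisely where the assumption that $W$ is non-affine and irreducible enters. Once this dictionary is in hand, the proof closes as sketched.
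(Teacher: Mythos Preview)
The paper does not supply its own proof of this lemma: it is quoted verbatim as Krammer's result \cite[Lemma~6.3.10]{Krammer}, with no further argument. So there is no ``paper's proof'' to compare your sketch against, beyond the bare citation.

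Your CAT(0) outline via the Davis--Moussong complex is a plausible alternative route, and the reduction to showing that $\pi(C_W(x))$ is finite is sound. Two points are worth flagging. First, the step ``extract an element $h \in C_W(x)$ whose image acts hyperbolically on $Y$'' is not automatic from unbounded orbits alone; you need something like Ruane's theorem that $C_W(x)$ acts properly cocompactly on $\mathrm{Min}(x)$, together with semisimplicity of the induced action on $Y$, to guarantee such an $h$. Second, and more seriously, the gap you yourself identify---deducing from a periodic $2$-flat in $\Sigma$ the existence of an affine parabolic subgroup containing $x$---is essentially the substantive content of Krammer's result. Krammer's own argument does not use the Davis complex; it proceeds through the Tits cone and his theory of critical roots and straight elements, and the structure theorem you invoke (relating flats to affine parabolics) is of comparable depth to the lemma itself. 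So your sketch does not really circumvent Krammer: it reorganises the difficulty and then defers to his machinery at the decisive moment. As written it is an honest outline that correctly locates the hard step, but not an independent proof.
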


\begin{lem}\label{lem:fi_retr-cs} Let $G$ be a group with subgroups $A,H \leqslant G$ such that $|G:H|<\infty$ and $A$ is a retract of $G$. If $H$ is hereditarily
conjugacy separable then $A \cap H$
is hereditarily conjugacy separable.
\end{lem}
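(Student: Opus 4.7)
The plan is to reduce the lemma to two independent observations: (i) a retract of a conjugacy separable group is itself conjugacy separable; and (ii) every finite-index subgroup $K$ of $A \cap H$ arises as a retract of some finite-index subgroup $H' \leqslant H$.

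For (i), let $\rho : G \to A$ be a retraction onto a subgroup $A$ of a conjugacy separable group $G$, and suppose $x, y \in A$ satisfy $y \notin x^A$. Then $x$ and $y$ are also non-conjugate in $G$, since a conjugator $g \in G$ with $g^{-1} x g = y$ would give $\rho(g)^{-1} x \rho(g) = \rho(y) = y$, contradicting $y \notin x^A$. Hence any finite quotient of $G$ separating $x^G$ from $y$ yields, by passing to the image of $A$, a finite quotient of $A$ separating $x^A$ from $y$.

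For (ii), given an arbitrary finite-index subgroup $K \leqslant A \cap H$ and the retraction $\rho_A : G \to A$ afforded by hypothesis, I would set
\[
H' \;:=\; \rho_A^{-1}(K) \cap H.
\]
The index $[G : \rho_A^{-1}(K)]$ equals $[A : K]$, because $\rho_A$ is a surjective homomorphism onto $A$, and this index is finite since $[A : A \cap H] \leqslant [G : H] < \infty$. Consequently $H'$ has finite index in $H$. Moreover, $\rho_A(H') \subseteq K$ by construction, while $\rho_A$ fixes $A \cap H' = K$ pointwise (the equality $A \cap H' = K$ uses that $\rho_A$ restricts to the identity on $A$ and that $K \subseteq A \cap H$). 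Thus $\rho_A|_{H'} : H' \to K$ is a retraction.

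Combining (i) and (ii), the hypothesis that $H$ is hereditarily conjugacy separable ensures that $H'$ is conjugacy separable, whereupon (i) applied to the retraction $\rho_A|_{H'}$ yields conjugacy separability of $K$. Since $K$ was an arbitrary finite-index subgroup of $A \cap H$, this proves the lemma. I do not foresee any serious obstacle; the crux is the little observation (i), and the rest reduces to the index count $[G : \rho_A^{-1}(K)] = [A : K]$, which is precisely what forces $H'$ to have finite index in $H$.
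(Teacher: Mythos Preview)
Your proof is correct and follows essentially the same approach as the paper's own argument. Both construct the finite-index subgroup $H' = \rho_A^{-1}(K) \cap H$ of $H$ and observe that $\rho_A$ restricts to a retraction of $H'$ onto $K$; the only cosmetic difference is that you isolate the fact ``a retract of a conjugacy separable group is conjugacy separable'' as a standalone observation (i), whereas the paper proves this inline via the identity $a^{H'} \cap K = a^{K}$ and the subspace profinite topology.
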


\begin{proof} Consider any finite index subgroup $A'$ of $A \cap H$. Let $\rho:G \to A$ be a retraction of $G$ onto $A$ and let $K:=\rho^{-1}(A') \cap H \leqslant G$.
Observe that  $|A:(A\cap H)| \le |G:H|<\infty$, therefore $|A:A'|<\infty$ and
$|H:K|\le |G:\rho^{-1}(A')|=|A:A'|<\infty$. Moreover, it is easy to see that  $A' \subseteq K$ and $\rho(K)\subseteq A'$, implying that the restriction of $\rho$ to $K$
is a retraction of $K$ onto $A'$.

Since $H$ is hereditarily conjugacy separable and $|H:K|<\infty$, $K$ is conjugacy separable, i.e., $a^K$ is separable in $K$ for each $a \in A'$. Note that $a^K \cap A'=a^{A'}$
(indeed, if $f^{-1} a f \in a^K \cap A'$ for some $f \in K$ then $f^{-1}af=\rho(f^{-1}af)={f'}^{-1}af' \in a^{A'}$, where $f':=\rho(f) \in A'$), hence $a^{A'}$ is closed in the
subspace topology on $A'$, induced by the profinite topology of $K$. In view of Remark \ref{rem:subspace_top} we see that $a^{A'}$ is
separable in $A'$. Thus any finite index subgroup $A' \leqslant A\cap H$ is conjugacy separable, i.e., $A\cap H$ is hereditarily conjugacy separable.
\end{proof}

\begin{lem} \label{lem:cox_cyc_sbgp_sep} Any {amenable} subgroup of a finitely generated Coxeter group   is closed in the profinite topology.
\end{lem}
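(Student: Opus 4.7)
The plan is to reduce amenable subgroups to finitely generated virtually abelian ones, then to cyclic subgroups generated by essential elements, where closedness can be handled using Krammer's lemma together with the residual finiteness of $W$.

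First, I would establish that $H$ is finitely generated virtually abelian. Since $W$ is linear in characteristic zero (via the Tits representation), the Tits alternative implies $H$ is virtually solvable, and combining this with Krammer's structural results on solvable subgroups of Coxeter groups (notably by iterating Lemma~\ref{lem:Krammer} along the derived series of $H$) upgrades this to virtually abelian. Finite generation follows from the proper action of $W$ on its Davis complex, since a virtually abelian subgroup stabilizes and acts cocompactly on a flat. Thus $H$ contains $H_0 \cong \mathbb{Z}^k$ as a finite-index subgroup, and since $W$ is residually finite by Mal'cev's theorem, it suffices to prove $H_0$ is closed in $\mathcal{PT}(W)$; the finitely many cosets representing $H/H_0$ are then handled by residual finiteness.

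Second, I would pass to the parabolic closure $P = \Pc(H_0)$, which by Lemma~\ref{lem:ParabHered} decomposes as a direct product of irreducible parabolic subgroups; the associated projections are retractions that behave well with respect to $\mathcal{PT}$. If every irreducible factor of $P$ is finite or affine, then $P$ itself is virtually abelian and $H_0$ is closed in $P$ by standard polycyclic separability; pushing this down to $W$ uses that such parabolic subgroups are themselves closed in $W$, via the parabolic closure machinery of Section~3. Otherwise, Lemma~\ref{lem:Krammer} combined with Lemma~\ref{lem:essential-power} forces the projection of $H_0$ to any infinite non-affine irreducible factor to be cyclic and generated by an essential element.

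The main obstacle will be the cyclic case with an essential generator $w$ in an infinite non-affine irreducible parabolic subgroup. Here, I would exploit the rank-one behavior of $w$ on the Davis complex of its parabolic closure together with the residual finiteness of $W$, in order to construct, for any prescribed $g \notin \langle w \rangle$, a finite quotient of $W$ in which the image of $g$ avoids the image of $\langle w \rangle$. The delicate point is that residual finiteness only separates points, whereas one must here separate a single element from an infinite cyclic subgroup; I expect this to be achieved by leveraging the finite index of $\langle w\rangle$ in $C_P(w)$ (Lemma~\ref{lem:Krammer}) together with a congruence quotient coming from the Tits representation over a number ring.
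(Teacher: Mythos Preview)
Your proposal has a genuine gap at the very point you yourself flag as the ``main obstacle'': separating an arbitrary element $g$ from an infinite cyclic subgroup $\langle w \rangle$. You say you ``expect this to be achieved'' via congruence quotients of the Tits representation together with the finite index of $\langle w\rangle$ in $C_P(w)$, but no argument is given, and this is precisely where the substance lies. Residual finiteness and the virtual cyclicity of centralizers do not by themselves yield separability of cyclic subgroups; one needs a genuine subgroup-separability statement for linear groups, and that is non-trivial. Your earlier reduction steps are also shaky: invoking ``iterating Lemma~\ref{lem:Krammer} along the derived series'' to pass from virtually solvable to virtually abelian does not make sense, since that lemma concerns centralizers of essential elements, not the structure of solvable subgroups.

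The paper's proof avoids all of this parabolic-closure and essential-element analysis. It first observes that amenable subgroups of CAT($0$) groups are virtually abelian (Adams--Ballmann), then uses the Haglund--Wise theorem to find a finite-index subgroup $G\leqslant W$ embedding into a right-angled Coxeter group, hence into $\mathrm{GL}_n(\Z)$ via the standard representation. The key input is then Segal's theorem that every solvable subgroup of $\mathrm{GL}_n(\Z)$ is closed in the profinite topology. This single black box replaces your entire third paragraph and handles all virtually solvable subgroups at once; the passage from $G$ back up to $W$ is a routine finite-union-of-cosets argument. In short, the ingredient you are missing is exactly a theorem of Segal type, and without it the proof does not close.
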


\begin{proof}
{Coxeter groups are CAT($0$) groups by \cite[Th.~12.3.3]{Davis}. Therefore every amenable subgroup is virtually abelian by \cite[Cor.~B]{AdamsBallmann}.}

By a theorem of Haglund and Wise \cite[Cor. 1.3]{Haglund-Wise_Cox}, any finitely generated Coxeter group $W$ contains a finite index subgroup $G$ such that $G$ is a subgroup of some right
angled Coxeter group $R$ of finite rank.
The standard geometric representation of $R$ (see \cite[5.3]{Humphreys}) is a faithful representation (\cite[Cor. 5.4]{Humphreys}) by matrices with
integer coefficients. It follows that
$G$ is a finitely generated subgroup of ${\rm GL}_n(\Z)$ for some $n \in \N$. Segal proved (see \cite[Thm. 5, p. 61]{Segal}) that every solvable subgroup of
${\rm GL}_n(\Z)$ is closed in the profinite topology
of that group. Hence every solvable subgroup of $G$ is separable in $G$ (by Remark \ref{rem:subspace_top}).

So, let $L$ be a virtually solvable subgroup of $W$. Since $|W:G|<\infty$ we can find a solvable subgroup
$M \leqslant L \cap G$ and $f_1,\dots,f_k \in L$ such that $L=\bigsqcup_{i=1}^kf_i M$. By the above, $M$ is
separable in $G$, therefore, according to  Remark \ref{rem:subspace_top}, it is also separable in $W$.
Consequently, $L$ is closed in $\mathcal{PT}(W)$ as a finite union of closed sets.
\end{proof}

We will now apply the Chagas-Zalesskii criterion \cite{C-Z_bianchi} to obtain

\begin{lem}\label{lem:ess-cd} Let $W$ be an infinite non-affine irreducible Coxeter group {of finite rank}. If $W$ has a finite
index  {hereditarily conjugacy} separable subgroup $H$ then every essential element in $W$ is conjugacy distinguished.
\end{lem}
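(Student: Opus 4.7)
The plan is to apply the Chagas--Zalesskii criterion (Proposition~\ref{prop:C-Z-crit}) with this same $H$ (after passing to its normal core).

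First, I would reduce to the case where $H$ is normal in $W$. If $H_0$ is the normal core of $H$ in $W$, then $H_0$ has finite index in $W$ and finite index in $H$. Since every finite index subgroup of $H$ is conjugacy separable, and every finite index subgroup of $H_0$ is itself a finite index subgroup of $H$, the subgroup $H_0$ is still hereditarily conjugacy separable. So replace $H$ by $H_0$ and set $m = |W:H|$.

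Next, fix an essential element $x \in W$ and consider $x^m$. By Lemma~\ref{lem:essential-power}, $x^m$ is essential as well, and since $W$ is infinite, $x$ (hence $x^m$) has infinite order. Now apply Krammer's lemma (Lemma~\ref{lem:Krammer}): since $W$ is infinite, irreducible and non-affine, the centralizer $C := C_W(x^m)$ contains $\langle x^m \rangle$ as a subgroup of finite index. In particular $C$ is virtually cyclic.

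Now I verify the two hypotheses of Proposition~\ref{prop:C-Z-crit}. For condition (i), $C$ is virtually cyclic, hence polycyclic, so it is conjugacy separable by the classical result of Remeslennikov \cite{Rem} (or Formanek \cite{Form}). For condition (ii), let $N$ be a finite index subgroup of $C$. Then $N$ is itself virtually cyclic, in particular virtually abelian and thus amenable; Lemma~\ref{lem:cox_cyc_sbgp_sep} then gives that $N$ is closed in the profinite topology on $W$. Having verified all the hypotheses, Proposition~\ref{prop:C-Z-crit} yields that $x^W$ is separable in $W$, i.e., $x$ is conjugacy distinguished.

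The main obstacle is conceptual rather than technical: it lies in recognizing that Krammer's structural result on centralizers of essential elements reduces the centralizer to a virtually cyclic (hence very tame) subgroup, so that both the internal conjugacy separability of $C_W(x^m)$ and the external separability of its finite index subgroups inside $W$ come essentially for free from standard results on polycyclic groups and on amenable subgroups of Coxeter groups. The non-affine hypothesis is indispensable precisely to invoke Krammer's lemma; without it, essential elements may have much larger centralizers and the criterion would not apply directly.
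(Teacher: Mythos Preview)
Your proof is correct and follows essentially the same route as the paper's: reduce to $H$ normal, use Lemma~\ref{lem:essential-power} and Lemma~\ref{lem:Krammer} to see that $C_W(x^m)$ is virtually cyclic, then verify the two hypotheses of Proposition~\ref{prop:C-Z-crit} via the Remeslennikov--Formanek theorem and Lemma~\ref{lem:cox_cyc_sbgp_sep}. The only cosmetic difference is that you spell out the passage to the normal core and the amenability justification a bit more explicitly than the paper does.
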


\begin{proof} Evidently we can assume that $H$ is normal in $W$. Consider any
essential element $x \in W$. Set $m :=|W:H| \in \N$, then $x^m$  is also essential in $W$ by Lemma \ref{lem:essential-power}. Therefore,
according to Lemma \ref{lem:Krammer}, the centralizer $C_W(x^m)$ is virtually cyclic and hence it is conjugacy separable (cf. \cite{Rem,Form}).
Also, every subgroup of $C_W(x^m)$ is virtually cyclic, and so it
is separable in $W$ by Lemma \ref{lem:cox_cyc_sbgp_sep}. Therefore we can apply Proposition \ref{prop:C-Z-crit} to conclude that $x^W$ is separable in $W$.
\end{proof}

{The proof of the next statement combines the criteria from Proposition \ref{prop:C-Z-crit} and Lem\-ma~\ref{lem:retr_crit}.}
\begin{prop}\label{prop:even_Cox_with_hcs_sbgp-cs} Suppose that $W$ is an even Coxeter group of finite rank that contains a finite
index normal subgroup $H \lhd W$ such that $H$ is hereditarily conjugacy
separable. Then $W$ is conjugacy separable.
\end{prop}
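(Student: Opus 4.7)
My plan is to prove the proposition by induction on the rank $n = |S|$, with the base case $n = 0$ being vacuous.

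For the inductive step, I would first carry out a sequence of reductions. If $W$ is finite, there is nothing to prove. If $W$ splits as a direct product $W = W_{S_1} \times W_{S_2}$ with $S_1, S_2$ nonempty, then each $W_{S_i}$ is an even Coxeter group of rank $<n$, and $H \cap W_{S_i}$ is a normal finite-index subgroup (using that $H \lhd W$ and $W_{S_i}$ is a retract of $W$) which is hereditarily conjugacy separable by Lemma~\ref{lem:fi_retr-cs}. The induction hypothesis, together with the elementary fact that a direct product of conjugacy separable groups is conjugacy separable, then handles the reducible case. If $W$ is irreducible, infinite and affine, then $W$ is virtually free abelian, hence virtually polycyclic, hence conjugacy separable by the theorem of Remeslennikov~\cite{Rem} and Formanek~\cite{Form}. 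Thus I may restrict to the case where $W$ is irreducible, infinite, and non-affine, where Lemma~\ref{lem:ess-cd} is available.

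Now fix $x \in W$; the goal is to show that $x^W$ is separable. Elements of finite order are handled by Proposition~\ref{prop:f_o_in_Cox-cd}, so assume $x$ has infinite order. After conjugation I may suppose $\pc(x) = W_I$, so that $x$ is essential in the (infinite) Coxeter group $W_I$. If $I = S$, then $x$ is essential in $W$ and the conclusion is immediate from Lemma~\ref{lem:ess-cd}. So suppose $I \subsetneq S$ and take an arbitrary $y \in W \setminus x^W$. If $y$ has finite order, residual finiteness of $W$ (a finitely generated linear group, cf.~\cite{Malcev}) yields a finite quotient in which the image of $x$ has order strictly greater than that of $y$, so the images cannot be conjugate. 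Thus assume $y$ also has infinite order, and after conjugation suppose $\pc(y) = W_J$ and that $y$ is essential in $W_J$. The case $J = S$ reduces to the previous one: Lemma~\ref{lem:ess-cd} applied to $y$ gives that $y^W$ is separable, and $x \notin y^W$ then provides the required finite quotient.

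The remaining case $I, J \subsetneq S$ is where Lemma~\ref{lem:retr_crit} enters. Both $W_I$ and $W_J$ are retracts of $W$ via the commuting canonical retractions $\rho_I$ and $\rho_J$ (commutation uses that $W$ is even), and $W_I \cap W_J = W_{I \cap J}$ by Remark~\ref{rem:inter_comm_retr}. From $y \notin x^W$ and Lemma~\ref{lem:retr_crit}, at least one of the three conditions listed there fails. Each of the standard parabolics $W_I$, $W_J$, $W_{I \cap J}$ has rank $< n$, is even, and contains $H \cap W_{\bullet}$ as a normal hereditarily conjugacy separable subgroup of finite index by Lemma~\ref{lem:fi_retr-cs}, so by the induction hypothesis each one is conjugacy separable. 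The failed condition is therefore witnessed in a finite quotient of the relevant standard parabolic, and precomposing with the corresponding canonical retraction $W \to W_\bullet$ yields the desired finite quotient of $W$. The main obstacle in the plan is organizing these cases so that Lemma~\ref{lem:retr_crit} produces useful information: this only happens when both $I$ and $J$ are proper in $S$, which is why the extreme cases ($x$ or $y$ essential in $W$) must be handled separately via Lemma~\ref{lem:ess-cd}, and why reducing to the irreducible non-affine case up front is essential.
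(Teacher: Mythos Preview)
Your proof is correct and follows essentially the same approach as the paper: induction on rank, reduction to the irreducible infinite non-affine case, handling essential elements via Lemma~\ref{lem:ess-cd}, and handling the remaining case via Lemma~\ref{lem:retr_crit} combined with the induction hypothesis (using Lemma~\ref{lem:fi_retr-cs} to propagate the hereditarily conjugacy separable finite-index subgroup down to the proper standard parabolics). The only difference is organizational: you treat finite-order $x$ and finite-order $y$ separately (via Proposition~\ref{prop:f_o_in_Cox-cd} and a residual-finiteness order argument), whereas the paper absorbs these into the essential/non-essential dichotomy directly---both are fine, and your extra case splits are harmless though unnecessary.
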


\begin{proof} The proof will proceed by induction on the rank $\rank(W)=|S|$, where $S$ is a fixed Coxeter generating set of $W$.
If $\rank(W) \le 1$ then $W$ is finite and the claim trivially holds. So suppose that $\rank(W)>1$ and the claim has already
been established for all even Coxeter groups of rank less than $\rank(W)$.
If $W$ is finite then there is nothing to prove; if $W$ is affine, then it is virtually abelian and so it is
conjugacy separable (as any virtually polycyclic group -- see \cite{Rem,Form}).

If $W$ is not irreducible, then $W=W_I \times W_J$ for some $I,J \subsetneqq S$ such that $S=I \sqcup J$. Note that $W_I$ is an even
Coxeter group with $\rank(W_I)=|I|<|S|=\rank(W)$ and
$W_I \cap H$ is a hereditarily conjugacy separable subgroup of finite index in $W_I$ (by Lemma \ref{lem:fi_retr-cs}). By the
induction hypothesis, $W_I$ is conjugacy separable; similarly, $W_J$ is
conjugacy separable. It is easy to check that the direct product of two conjugacy separable groups is conjugacy separable, hence $W=W_I \times W_J$ is conjugacy separable.

Therefore we can further assume that $W$ is infinite, non-affine and irreducible.
Take an arbitrary element $x \in W$.
If $x$ is essential in $W$ then $x$ is conjugacy distinguished by Lemma~\ref{lem:ess-cd}.
Thus we can further assume that  $x$ is not an essential element of $W$.
In this case, after replacing $x$ with its conjugate, we can suppose that $x \in W_I$ for some $I \subsetneqq S$.
Choose any $y \in W \setminus x^W$. If $y$ is an essential element of $W$, then $y^W$ is separable in $W$ by Lemma \ref{lem:ess-cd}. Since $x \notin y^W$,
there is a finite group $F$ and a
homomorphism $\alpha:W \to F$ such that $\alpha(x) \notin \alpha(y)^F$, which is equivalent to  $\alpha(y) \notin \alpha(x)^F$ in $F$.
The latter means that $y$ does not belong to the closure of $x^W$ in $\mathcal{PT}(W)$.

So, we can suppose that $y$ is not essential in $W$, which, without loss of generality, allows us to assume that $y\in W_J$ for some $J \subsetneqq S$. Since $y \notin x^W$,
using Lemma \ref{lem:retr_crit} we see that either $\rho_I(y) \notin x^{W_I}$ in $W_I$, or $\rho_J(x) \notin y^{W_J}$ in $W_J$,
or $\rho_{I \cap J}(y) \notin \left(\rho_{I\cap J}(x)\right)^{W_{I\cap J}}$ in $W_{I\cap J}$. Let us focus on the case when $\rho_I(y) \notin x^{W_I}$ in $W_I$ as
the other two cases are similar.

Since $\rank(W_I)=|I|<|S|=\rank(W)$, the induction hypothesis holds (in view of Lemma \ref{lem:fi_retr-cs}), and so $W_I$ is conjugacy separable. Therefore, there is
a finite group $F$ and a homomorphism $\alpha:W_I \to F$ such that $\alpha(\rho_I(y)) \notin \alpha(x)^F$ in $F$. Thus the homomorphism $\alpha\circ\rho_I:W \to F$ separates
the the image of $y$ from the conjugacy class of the image of $x$ in $F$. Since such a homomorphism has been found for an arbitrary $y \in W \setminus x^W$, we are able
to conclude that $x^W$ is separable in $W$, which finishes the proof of the proposition.
\end{proof}

\begin{proof}[Proof of Theorem \ref{thm:some_even_Cox-cs}]  By Corollary 1.5 from \cite{Caprace-Muhlherr}, any Coxeter group whose Coxeter diagram does not contain irreducible affine subdiagrams
of rank at least $3$ acts cocompactly on the associated
Niblo--Reeves cube complex (see \cite{Niblo-Reeves}). Since the only even irreducible affine Coxeter diagram of rank $\ge 3$ is $\widetilde{B}_2$
(according to the classification of all irreducible affine Coxeter groups --  see, {for example}, \cite[Appendix C]{Davis}),
our assumptions imply that $W$ acts cocompactly on its Niblo--Reeves cubing.

As discussed in the introduction,
the results of Haglund and Wise from
\cite{Haglund-Wise_Special,Haglund-Wise_Cox} combined with the main theorem of \cite{Min-RAAG} imply that
every Coxeter group, whose action on the associated Niblo--Reeves cube complex is cocompact, has a hereditarily conjugacy separable subgroup of finite index.
Therefore, $W$ satisfies all the assumptions of Proposition~\ref{prop:even_Cox_with_hcs_sbgp-cs}, allowing us to conclude that it is conjugacy separable.
\end{proof}

\section{Reflection subgroups of Coxeter groups}

A \textbf{reflection subgroup} of $W$ is defined as a subgroup of $W$ generated by reflections. For example each parabolic subgroup is a reflection subgroup.
It is a general fact that a reflection subgroup is itself a Coxeter group. We shall need the following more precise version of this fact.

\begin{prop}\label{prop:ReflSubgroup}
Let   $G \leqslant W$ be a reflection {subgroup}.

\begin{enumerate}[(i)]
\item There is a set of reflections $R \subset G$ such that $(G, R)$ is a Coxeter system.

\item Let $\Gamma_{(W, S)}$ (resp. $\Gamma_{(G, R)}$) be the Cayley graph of $(W, S)$ (resp. $(G, R)$). Let $\Gamma$ be the quotient graph of $\Gamma_{(W, S)}$
obtained by collapsing each edge stabilised by a reflection which does not belong to $G$. Then $\Gamma$ is $G$-equivariantly isomorphic to $\Gamma_{(G, R)}$.

\end{enumerate}
\end{prop}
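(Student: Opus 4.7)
The plan is to analyze the action of $G$ on $\Gamma_{(W,S)}$ viewed as a chamber system, decomposing it into $G$-chambers, and identifying these with the vertices of $\Gamma_{(G,R)}$.

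First, I would set up the geometry. Identify vertices of $\Gamma_{(W,S)}$ with chambers, and for each reflection $r$ of $W$, let the \emph{wall} of $r$ denote the set of edges of $\Gamma_{(W,S)}$ stabilised by $r$. Denote by $T_G$ the set of reflections of $W$ lying in $G$. The complement of the union of walls of reflections in $T_G$ partitions the vertex set of $\Gamma_{(W,S)}$ into connected components, which I call the \emph{$G$-chambers}. Equivalently, these are the equivalence classes under the relation generated by $v \sim v'$ whenever $(v,v')$ is an edge whose stabilising reflection is not in $G$. This gives a canonical bijection between the vertex set of the quotient $\Gamma$ and the set of $G$-chambers.

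For part (i), let $C_0$ be the $G$-chamber containing the base vertex $1 \in W$, and set $R := \{r \in T_G \mid r \cdot C_0 \text{ is adjacent to } C_0\}$; that is, the reflections in $G$ whose wall bounds $C_0$. The fact that $(G,R)$ is then a Coxeter system is a classical theorem of Deodhar and Dyer, which I would invoke as a black box. Its proof rests on showing that $C_0$ plays the role of a fundamental chamber for the $G$-action on $\Gamma_{(W,S)}$, that $R$ generates $G$, and that the pair $(G,R)$ satisfies the exchange condition, yielding a Coxeter presentation.

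For part (ii), I would verify that $G$ acts freely and transitively on the set of $G$-chambers (freeness is inherited from the free $W$-action on $W$-chambers, transitivity is standard). This yields a $G$-equivariant bijection $G \to V(\Gamma)$, $g \mapsto g \cdot C_0$, matching $V(\Gamma_{(G,R)})$. For edges: an edge of $\Gamma_{(G,R)}$ joining $g$ to $gr$ (with $r \in R$) corresponds to the $R$-adjacency between $G$-chambers $g \cdot C_0$ and $gr \cdot C_0$, realised in $\Gamma$ by the edges of $\Gamma_{(W,S)}$ crossing their shared wall, all stabilised by $grg^{-1}$. Checking that the resulting map of edges is a well-defined $G$-equivariant bijection then completes the proof.

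The main obstacle is the citation of the Deodhar--Dyer theorem for part (i), since the statement that reflection subgroups of Coxeter groups are themselves Coxeter groups (with a canonical generating set given by the bounding walls of the fundamental $G$-chamber) is a non-trivial classical result. A secondary subtlety in part (ii) is that two adjacent $G$-chambers may be connected by several edges of $\Gamma_{(W,S)}$, all stabilised by the same reflection of $G$; these collectively correspond to a single labelled edge of $\Gamma_{(G,R)}$ under the natural interpretation of the quotient graph as having edges indexed by walls of $G$ (equivalently, by their stabilising reflection).
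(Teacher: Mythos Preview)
Your proposal is correct and takes essentially the same approach as the paper: the paper's proof consists entirely of the citation ``See \cite{DeoRefl} or \cite{Dyer}'', deferring both parts to the Deodhar--Dyer theorem, which is exactly the black box you invoke. Your sketch of the chamber-system argument and the edge correspondence in part~(ii) is accurate and goes well beyond the paper's bare citation, but there is no divergence in strategy.
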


\begin{proof}
See \cite{DeoRefl} or \cite{Dyer}.
\end{proof}

We emphasize that, as opposed to the case of parabolic subgroups, it is not true in general that an intersection of reflection subgroups is itself a reflection subgroup.
Indeed, consider the infinite dihedral group $W$. It has two conjugacy classes of reflections, each generating a reflection subgroup which is of finite index in $W$.
The intersection of these two subgroups is torsion-free and of finite index in $W$, hence it is not a reflection subgroup.

The following strengthening of Lemma~\ref{lem:CoxeterElement} shows however that for some specific elements in $W$, there is a unique minimal reflection subgroup containing them.

\begin{lem}\label{lem:CoxeterElement:Refl}
Let $J = \{s_1, \dots, s_n\} \subseteq S$ and denote by $w = s_1 s_2 \dots s_n$ the product of all elements of $J$ (ordered arbitrarily).

Then every reflection subgroup of $W$
containing $w$ also contains $J$. In particular $W_J$ is the unique minimal reflection subgroup of $W$ containing $w$.
\end{lem}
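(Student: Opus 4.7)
The plan is to leverage Proposition~\ref{prop:ReflSubgroup} to realize any reflection subgroup $G \ni w$ as a Coxeter system $(G, R)$ in its own right, and then to compare path lengths between the Cayley graphs $\Gamma_{(W, S)}$ and $\Gamma_{(G, R)}$ in order to force a prescribed list of reflections into $G$.

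First I would introduce, for $i = 1, \dots, n$, the reflections
\[
 t_i \;:=\; (s_1 \cdots s_{i-1})\, s_i\, (s_1 \cdots s_{i-1})\inv \;\in\; W_J,
\]
and verify by a straightforward backward induction (using $t_1 = s_1$ and $s_i = (s_{i-1}\cdots s_1)\, t_i\, (s_1 \cdots s_{i-1})$) that $\la t_1, \dots, t_n \ra = \la s_1, \dots, s_n \ra = W_J$. Thus it suffices to show $t_i \in G$ for every $i$.

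Next I would consider the edge-path $\gamma$ of length $n$ in $\Gamma_{(W, S)}$ from $1$ to $w$ through the vertices $1, s_1, s_1 s_2, \dots, s_1 s_2 \cdots s_n = w$, and observe that its $i$-th edge is stabilised precisely by $t_i$. Proposition~\ref{prop:ReflSubgroup}(ii) then identifies the image of $\gamma$ under the quotient map to $\Gamma_{(G, R)}$ with a path from $1$ to $w$ of length exactly $|\{i : t_i \in G\}|$, yielding
\[
 \ell_R^G(w) \;\leq\; |\{i : t_i \in G\}| \;\leq\; n,
\]
where $\ell_R^G$ denotes the word length in the Coxeter system $(G, R)$.

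The matching lower bound $\ell_R^G(w) \geq n$ is the step I expect to be the main obstacle. Since any $R$-word in $G$ is \emph{a fortiori} a product of reflections of $W$, one has $\ell_R^G(w) \geq \ell_R^W(w)$. Combining this with Lemma~\ref{lem:CoxeterElement}, which asserts $\Pc_W(w) = W_J$ has rank $n$, reduces the required inequality to the standard fact that for any $v \in W$, the rank of $\Pc_W(v)$ is bounded above by the reflection length $\ell_R^W(v)$. This in turn reduces to the following refinement of Lemma~\ref{lem:PcFinite}: any reflection subgroup of $W$ generated by $k$ reflections has parabolic closure of rank at most $k$. For finite reflection subgroups this is exactly Lemma~\ref{lem:PcFinite}, and for arbitrary reflection subgroups one can obtain it by arguing through the standard geometric representation of $W$ (each reflection reduces the dimension of the fixed subspace by at most one) together with the fact, already implicit in Lemma~\ref{lem:CoxeterElement}, that a product of distinct simple reflections has no nonzero fixed vectors in the span of the corresponding simple roots.

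Combining the upper and lower bounds forces $|\{i : t_i \in G\}| = n$, so every $t_i$ lies in $G$, and hence $W_J = \la t_1, \dots, t_n \ra \leq G$; in particular $J \subseteq G$. Since $W_J$ is itself a reflection subgroup of $W$ containing $w$, this simultaneously yields the minimality and uniqueness of $W_J$ as the smallest reflection subgroup of $W$ containing $w$.
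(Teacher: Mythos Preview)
Your overall strategy --- bounding $\ell_R^G(w)$ from above via the quotient path and from below via the reflection length in $W$ --- is natural, and the upper-bound step is precisely what the paper extracts: its Lemma~\ref{lem:SubRefl} records that $w \in \la G \cap \{t_1, \dots, t_n\} \ra$, which is your inequality $\ell_R^G(w) \leq |\{i : t_i \in G\}|$ in disguise. The genuine gap lies in the lower bound.

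The assertion that ``any reflection subgroup of $W$ generated by $k$ reflections has parabolic closure of rank at most $k$'' fails for infinite reflection subgroups. Take $W = \la s, t, u \ra$ with all $m_{ij} = \infty$, so that $\Gamma_{(W,S)}$ is the $3$-regular tree. The subgroup $H = \la s,\, tut \ra$ is generated by two reflections, but $\Pc(H) = W$: a rank-$2$ parabolic is the stabiliser of a bi-infinite line in the tree whose edge-labels all lie in a two-element subset of $\{s,t,u\}$, whereas any line containing both the edge $\{1,s\}$ (fixed by $s$) and the edge $\{t,tu\}$ (fixed by $tut$) must use edges labelled $s$, $t$ and $u$. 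Equivalently, $v = s \cdot tut$ satisfies $\ell_R^W(v) = 2$ while $\rank(\Pc_W(v)) = 3$, so the inequality $\rank(\Pc_W(v)) \leq \ell_R^W(v)$ is simply not available. Your proposed justification via the geometric representation breaks at the same point: the claim that $w = s_1 \cdots s_n$ has no nonzero fixed vector in $\mathrm{span}(\alpha_{s_1},\dots,\alpha_{s_n})$ is false whenever $W_J$ is infinite --- already for $J = \{s,t\}$ with $m_{st} = \infty$ one checks directly that $st$ fixes $\alpha_s + \alpha_t$, since the bilinear form is degenerate there.

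The paper circumvents this by replacing the global length comparison with an induction on $n$. From $w \in \la G \cap \{t_1, \dots, t_n\}\ra$ it observes that if $t_n \notin G$ then $w \in \la t_1, \dots, t_{n-1} \ra = W_{J\setminus\{s_n\}}$, directly contradicting Lemma~\ref{lem:CoxeterElement}. Hence $t_n \in G$, so $s_1 \cdots s_{n-1} = t_n w \in G$, and the inductive hypothesis applied to $J \setminus \{s_n\}$ yields $s_1, \dots, s_{n-1} \in G$; then $s_n \in G$ follows. This uses only the parabolic closure of $w$ itself (Lemma~\ref{lem:CoxeterElement}), never the problematic general bound on $\Pc$ of a reflection subgroup.
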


This will be deduced from the following.

\begin{lem}\label{lem:SubRefl}
Let  $w  = s_1 \dots s_n \in W$  be a reduced word. Set $r_i = s_1 s_2 \dots s_{i-1}s_i s_{i-1} \dots s_1$ for all $i = 1, \dots, n$.

For any reflection subgroup $G \leqslant W$ containing $w$,
 we have $w \in \la G \cap \{r_1, \dots, r_n\} \ra$.
\end{lem}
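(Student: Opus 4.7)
The plan is to track a minimal gallery from $1$ to $w$ under the quotient map from $\Gamma_{(W,S)}$ to $\Gamma \cong \Gamma_{(G,R)}$ furnished by Proposition~\ref{prop:ReflSubgroup}(ii). The reduced expression $s_1 s_2 \cdots s_n$ gives a minimal gallery $\gamma \colon 1 = g_0, g_1, \dots, g_n = w$ in $\Gamma_{(W,S)}$, with $g_k = s_1 \cdots s_k$. The edge from $g_{k-1}$ to $g_k$ lies on the wall of the unique reflection $g_{k-1} s_k g_{k-1}^{-1} = r_k$. (As a by-product, telescoping shows the identity $w = r_n r_{n-1} \cdots r_1$, which however will not be used directly.)

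Let $I = \{i \in \{1, \dots, n\} : r_i \in G\}$, enumerated as $i_1 < i_2 < \cdots < i_m$. Upon projecting $\gamma$ to $\Gamma$, the edges stabilised by reflections outside $G$ are collapsed, while the edges indexed by $I$ survive. Identifying the resulting gallery with one in $\Gamma_{(G, R)}$, we obtain vertices $1 = h_0, h_1, \dots, h_m = w$ with $h_j = h_{j-1} t_j$ for certain $t_j \in R$. Since a wall in a Coxeter chamber system is stabilised by a unique reflection, the reflection $h_{j-1} t_j h_{j-1}^{-1}$ stabilising the $j$-th wall of the projected gallery must coincide with $r_{i_j}$. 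Thus
\[t_j = h_{j-1}^{-1} r_{i_j} h_{j-1}.\]

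A straightforward induction on $j$, using that $h_{j-1} = t_1 t_2 \cdots t_{j-1}$, then yields $\langle t_1, \dots, t_j\rangle = \langle r_{i_1}, \dots, r_{i_j}\rangle$ for every $j \leq m$ (the forward inclusion follows from the formula above, and the reverse from $r_{i_j} = h_{j-1} t_j h_{j-1}^{-1}$). Setting $j = m$ and recalling that $w = t_1 \cdots t_m$, we conclude
\[w \in \langle r_{i_1}, \dots, r_{i_m} \rangle = \langle G \cap \{r_1, \dots, r_n\} \rangle,\]
as desired.

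The most delicate step is the identification, in the second paragraph, of the reflection through each surviving wall in $\Gamma_{(G,R)}$ with the corresponding $r_{i_j}$. This relies on the $G$-equivariance of the isomorphism $\Gamma \cong \Gamma_{(G,R)}$ in Proposition~\ref{prop:ReflSubgroup}(ii) together with the uniqueness of the reflection through a wall in a Coxeter chamber system; everything else is formal.
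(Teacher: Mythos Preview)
Your proof is correct and follows essentially the same strategy as the paper: project the minimal gallery $\gamma$ via the $G$-equivariant quotient map of Proposition~\ref{prop:ReflSubgroup}(ii), and identify the reflections through the surviving walls with the $r_i$ lying in $G$. The only cosmetic difference is in the final bookkeeping: the paper records the projected path directly as $w = r'_m \cdots r'_1$ with $\{r'_1,\dots,r'_m\} = G \cap \{r_1,\dots,r_n\}$, whereas you write $w = t_1 \cdots t_m$ in the generators $R$ and then verify by a short induction that $\langle t_1,\dots,t_m\rangle = \langle r_{i_1},\dots,r_{i_m}\rangle$; these are two ways of packaging the same telescoping identity.
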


\begin{proof}
Let $\mathcal W_i$ be the wall fixed by $r_i$ in the Cayley graph  $\Gamma_{(W, S)}$ of $(W, S)$. Since $w  = s_1 \dots s_n \in W$ is reduced, it follows that
$\mathcal W_1, \dots, \mathcal W_n$ are exactly the walls separating $1$ from $w$ in $\Gamma_{(W, S)}$. These walls are successively crossed by a minimal
path $\gamma$ from $1$ to $w$.

Let now $\varphi : \Gamma_{(W, S)} \to \Gamma$ be the $G$-equivariant map to the Cayley graph of $G$ provided by Proposition~\ref{prop:ReflSubgroup}.
Then $\varphi(\gamma)$ is a path joining $1$ to $w$ in the Cayley graph $\Gamma$. Let $\mathcal W'_1, \dots , \mathcal W'_m$ be the walls of $\Gamma$
successively crossed by $\varphi(\gamma)$, and let $r'_i$ be the reflection fixing $\mathcal W'_i$. Thus we have
$w = r'_m \dots r'_1$.

Proposition~\ref{prop:ReflSubgroup}(ii) implies that
$$\{r'_1, \dots, r'_m\} =  \{r_1, \dots, r_n\} \cap G.$$
The desired result follows.
\end{proof}

\begin{proof}[Proof of Lemma~\ref{lem:CoxeterElement:Refl}]
We proceed by induction on $n = | J |$, the base case $n =1$ being trivial.

Let $G \leqslant W$ be a reflection subgroup containing $w = s_1 s_2 \dots s_n$. For each $i$, let $r_i = s_1 s_2 \dots s_{i-1}s_i s_{i-1} \dots s_1$.

We claim that $r_n \in G$. If this were not the case, then Lemma~\ref{lem:SubRefl} would imply that $w \in \la r_1, \dots, r_{n-1} \ra = \la s_1, \dots, s_{n-1}\ra$,
contradicting Lemma~\ref{lem:CoxeterElement}.

The claim implies that $s_1 \dots s_{n-1} = r_n w$ is contained in $G$. By induction, this implies that $G$ contains $\{s_1, \dots, s_{n-1}\}$.
Since $G$ also contains {$r_n $},
it follows that $G$ contains $s_n$, whence $J \subset G$ as desired.
\end{proof}

\section{Automorphisms preserving parabolic subgroups up to conjugacy}\label{sec:ParabPreserv}

Let $W$ be a finitely generated Coxeter group.
Two Coxeter generating sets $S_1$, $S_2$ for $W$ are called \textbf{reflection-compatible} if each element of $S_1$ is
conjugate to an element of $S_2$. They are called \textbf{angle-compatible} if they are reflection-compatible and if,
moreover, for each spherical pair $\{s, t\} \subseteq S_1$, there is $w \in W$ such that $\{wsw^{-1}, wtw^{-1}\} \subseteq S_2$.}
Furthermore, we say that $S_1$ and $S_2$ are \textbf{parabolic-compatible} if for every $J_1 \subseteq S_1$,
there is some $J_2 \subseteq S_2$ such that the subgroup $W_{J_1}$ is conjugate to $W_{J_2}$.

It is important to remark that reflection-compatibility, angle-compatibility and parabolic-compatibility are equivalence
relations on the collection of all Coxeter generating sets. For the first two relations, see \cite[Appendix~A]{TwistRigid};
for parabolic-compatibility, this follows from Lemma~\ref{lem:ParabCompatible} below.

The following basic observation is useful.

\begin{lem}\label{lem:Card}
Let $W$ be a finitely generated Coxeter group. Any two Coxeter generating sets which are reflection-compatible have the same cardinality.
\end{lem}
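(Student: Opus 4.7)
The plan is to compare $S_1$ and $S_2$ via the abelianization map $\pi \colon W \to W^{\mathrm{ab}}$. Since all Coxeter generators are involutions, $W^{\mathrm{ab}}$ is a finite-dimensional vector space over $\mathbb{F}_2$. Using the Coxeter presentation, I would first observe that for any Coxeter generating set $S$ of $W$ and any $s, t \in S$, one has $\pi(s) = \pi(t)$ if and only if $s$ and $t$ are conjugate in $W$. Indeed, the relations $(st)^{m_{st}} = 1$ with $m_{st}$ odd both identify $\pi(s)$ with $\pi(t)$ in $W^{\mathrm{ab}}$ and witness a chain of braid moves conjugating $s$ to $t$; the converse is immediate since images in the abelianization are conjugation-invariant. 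As a consequence, the distinct elements of $\pi(S)$ form a basis of $W^{\mathrm{ab}}$, of cardinality $k := \dim_{\mathbb{F}_2}(W^{\mathrm{ab}})$.

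Next, I would apply the hypothesis of reflection-compatibility: each $s \in S_1$ is conjugate in $W$ to some $t \in S_2$, so $\pi(s) \in \pi(S_2)$, giving $\pi(S_1) \subseteq \pi(S_2)$; by symmetry of the relation (a feature of reflection-compatibility being an equivalence relation) the reverse inclusion also holds, and thus $\pi(S_1) = \pi(S_2)$. It follows that $S_1$ and $S_2$ meet exactly the same $k$ conjugacy classes of reflections in $W$, and so the number of distinct $\pi$-images agrees between the two generating sets.

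To promote this to an equality of cardinalities, it remains to verify that for each conjugacy class $c$ of reflections meeting $S_i$, the multiplicity $m_c^{(i)} := |S_i \cap c|$ is independent of $i$. For this I would reduce to the case of a single conjugacy class by passing to the quotient $W/N_c$, where $N_c$ is the normal closure of all reflections not lying in $c$ (an intrinsic object depending only on $W$ and $c$). Using that any edge of the Coxeter diagram joining an element of $c$ to one outside $c$ necessarily carries an even label (otherwise they would already be conjugate), one checks that killing $S_i \setminus c$ imposes no new relations on the images of $S_i \cap c$; hence $\overline{S_i \cap c}$ is a Coxeter generating set of $W/N_c$, and the two Coxeter structures obtained for $i = 1, 2$ remain reflection-compatible. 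Iterating reduces us to Coxeter groups admitting a unique conjugacy class of reflections.

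The main obstacle lies in this single-class case, where the abelianization carries no further information. Here I would invoke Proposition~\ref{prop:ReflSubgroup} applied to $W$ viewed as a reflection subgroup of itself, once with respect to the ambient system $(W, S_1)$ and once with respect to $(W, S_2)$. Part~(ii) produces a $W$-equivariant isomorphism between the Cayley graph of the ambient system and that of the canonical Coxeter structure extracted from the common reflection set $\mathcal{R} = \mathcal{R}_1 = \mathcal{R}_2$. The number of $W$-orbits of edges in this common graph is then an invariant tied to $\mathcal{R}$ and, read off both sides, equals $|S_1|$ and $|S_2|$ simultaneously. This yields $|S_1| = |S_2|$ and completes the proof.
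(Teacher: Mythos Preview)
Your reduction via the abelianization and the quotient by $N_c$ is correct and pleasant: it does reduce the question to the case where all reflections are conjugate. The gap is in the last paragraph. Proposition~\ref{prop:ReflSubgroup} does \emph{not} produce a Coxeter structure depending only on the reflection set $\mathcal R$; the set $R$ it outputs is constructed using the length function (equivalently, the chamber geometry) of the \emph{ambient} system $(W,S)$. Concretely, Dyer's canonical generators of a reflection subgroup $G\leqslant W$ are those $r\in \mathcal R\cap G$ with $\ell_S(r'r)>\ell_S(r')$ for all $r'\in(\mathcal R\cap G)\setminus\{r\}$, which visibly depends on $\ell_S$. When $G=W$, applying the proposition with ambient system $(W,S_1)$ simply returns $S_1$, and with ambient system $(W,S_2)$ returns $S_2$; there is no ``common graph'' and no invariant attached to $\mathcal R$ alone that you can read off. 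So the sentence ``the number of $W$-orbits of edges in this common graph is then an invariant tied to $\mathcal R$'' is exactly the statement $|S_1|=|S_2|$ you are trying to prove.

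The paper's argument is quite different and bypasses all of this. One works in the geometric (Tits) representation of $(W,S_1)$ on a real vector space $V$ with $\dim V=|S_1|$. Since each element of $S_2$ is a reflection for $(W,S_1)$, it corresponds to a root in the associated root system $\Phi\subset V$; the fact that $S_2$ is itself a Coxeter generating set forces these roots to form a simple system for some positive system in $\Phi$, hence to be linearly independent. Thus $|S_2|\le\dim V=|S_1|$, and equality follows by symmetry. In short, the desired equality really does ``boil down to the property that any two bases of a vector space have the same cardinality'', and your combinatorial reduction, while correct as far as it goes, does not avoid the need for a linear-algebra or root-system input at the end.
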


\begin{proof}
Follows from basic considerations using root systems. {The desired statement boils down to the property that any two bases of a vector space have the same cardinality.}
\end{proof}


{Remark that two Coxeter generating sets that are not reflection-compatible need not have the same cardinality.
For example, the dihedral group of order $12$ is isomorphic to the direct product of the dihedral group of order $6$ with the cyclic group of order $2$.}


{
Clearly, the relation of parabolic-compatibility is much stronger than reflection-compa\-tibility among Coxeter generating sets for $W$. For example, if $W$ is a
\textbf{free} Coxeter group, i.e., a free product of groups of order~$2$, then any two Coxeter generating sets are reflection compatible (because any involution
in $W$ is a reflection in that case), but if the rank of $W$ is at least~$3$, it is easy to find automorphisms that do not map every parabolic subgroup to a parabolic subgroup.

The following lemma shows however that reflection-compatibility is sufficient to ensure the compatibility of all \emph{spherical} parabolic subgroups.

\begin{lem}\label{lem:Autom}
Let  $S, S'$ be reflection-compatible Coxeter generating sets for a Coxeter group $W$.
Then for each spherical subset $J \subseteq S$, there is a subset $J' \subseteq S'$ with $|J| = |J'|$ such that  $W_J$ and $W_{J'}$ are conjugate.
\end{lem}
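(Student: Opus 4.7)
The plan is to apply Lemma~\ref{lem:PcFinite} twice—once with respect to each Coxeter generating set—in order to identify $W_J$ with a standard parabolic subgroup with respect to $S'$ of the same rank. Since $J$ is spherical, $W_J$ is finite, and it is generated by the $|J|$ elements of $J$, each of which is a reflection in $W$. First I would use that reflection-compatibility is an equivalence relation (and hence symmetric) to conclude that $S$ and $S'$ determine the very same set of reflections of $W$; in particular $W_J$ is a reflection subgroup with respect to $S'$ as well.

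Next, let $P' = \Pc(W_J)$ denote the parabolic closure of $W_J$ taken inside $(W,S')$. Lemma~\ref{lem:PcFinite} gives that $P'$ is finite and is conjugate to some $W_{K'}$ with $K' \subseteq S'$ and $|K'| \leq |J|$. Consequently $P'$ is generated by $|K'| \leq |J|$ elements that are reflections both with respect to $S'$ and with respect to $S$. A second application of Lemma~\ref{lem:PcFinite}, this time inside $(W,S)$, then shows that the parabolic closure of $P'$ with respect to $S$ is a finite $S$-parabolic of rank at most $|K'| \leq |J|$. But this parabolic closure contains $W_J$, which is itself a standard parabolic of rank exactly $|J|$, and strict containment among parabolic subgroups strictly increases the rank. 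Hence this parabolic closure coincides with $W_J$, forcing $P' \subseteq W_J$; combined with the trivial inclusion $W_J \subseteq P'$ this yields $P' = W_J$.

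It follows that $W_J$ is itself parabolic with respect to $(W,S')$, and therefore conjugate to $W_{J'}$ for some $J' \subseteq S'$. The remaining equality $|J'| = |J|$ will come from a symmetric two-way bound via Lemma~\ref{lem:PcFinite}: on the one hand $W_J$ is generated by $|J|$ reflections, giving $|J'| \leq |J|$; on the other $W_{J'}$ is generated by $|J'|$ reflections, giving $|J| \leq |J'|$. The only delicate point I would want to verify carefully is the symmetry of reflection-compatibility; without that symmetry, the interchange of ``reflection with respect to $S$'' and ``reflection with respect to $S'$'' needed between the two applications of Lemma~\ref{lem:PcFinite} would be unjustified, and the argument would collapse.
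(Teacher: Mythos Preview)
Your proposal is correct and follows essentially the same route as the paper: apply Lemma~\ref{lem:PcFinite} first in $(W,S')$ to get a finite $S'$-parabolic $P'$ of rank $\leq |J|$ containing $W_J$, then in $(W,S)$ to the group $P'$ to get an $S$-parabolic $Q$ of rank $\leq |J|$ containing $W_J$, and conclude $W_J = Q = P'$ from the rank inequalities. Your explicit emphasis on the symmetry of reflection-compatibility is exactly what the paper uses implicitly (and records elsewhere as the fact that reflection-compatibility is an equivalence relation); note also that your final separate two-way bound for $|J'| = |J|$ is slightly redundant, since the chain $\rank_S(Q) \leq |K'| \leq |J|$ together with $\rank_S(Q) = |J|$ already forces $|K'| = |J|$.
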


\begin{proof}
Let $n = |J|$ and $P = \Pc(W_J)$ be the parabolic closure of $W_J$ with respect to the Coxeter generating set $S'$. Then $P$ is a finite parabolic subgroup of rank~$k \leq n$ with respect to $S'$, by Lemma~\ref{lem:PcFinite}.
The lemma  also implies that the parabolic closure $Q$ of $P$ with respect to $S$  is a parabolic subgroup of rank~$k ' \leq k \leq n$ with respect to $S$.
Since $W_J \leqslant P$,
we have $W_J \leqslant Q$. Since $W_J$ is of rank $n$ and $Q$ is of rank~$k' \leq n$, it follows that $W_J = Q$
and  $k' = n$.
In particular $W_J = P = Q$ and $k = k' = n$ so that $W_J$ is a finite parabolic of rank $n$ with respect to $S'$, as desired.
\end{proof}

}

We shall also need the following technical fact, showing that the various notions of compatibility are appropriately inherited by parabolic subgroups.

\begin{lem}\label{lem:CompatHered}
Let $W$ be a finitely generated Coxeter group, and $S_1, S_2$ be two Coxeter generating sets.  Let $J_1 \subseteq S_1$ and $J_2 \subseteq S_2$ be
such that $W_{J_1} = W_{J_2}$.

If $S_1$ and $S_2$ are reflection-compatible (resp. angle-compatible, parabolic-compatible), then so are  $J_1$ and $J_2$ as Coxeter
generating sets for the Coxeter subgroup $W_{J_1} = W_{J_2}$.
\end{lem}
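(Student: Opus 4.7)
The plan is to treat the three inheritance statements separately, as they differ considerably in difficulty. Parabolic-compatibility follows almost directly from Lemma~\ref{lem:ParabHered}: given $K_1 \subseteq J_1$, the subgroup $W_{K_1}$ is a standard parabolic of $(W, S_1)$ and lies in $H = W_{J_2}$, so $W_{K_1} = W_{K_1} \cap H$ is a parabolic subgroup of $(H, J_2)$ by that lemma, hence $H$-conjugate to some $W_{K_2}$ with $K_2 \subseteq J_2$. Swapping $J_1$ and $J_2$ yields the reverse direction. (Note that this argument does not actually use parabolic-compatibility of $S_1, S_2$; the conclusion holds unconditionally.)

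For reflection-compatibility, I would first observe that the reflection-compatibility hypothesis on $S_1, S_2$ means the reflection sets of $(W, S_1)$ and $(W, S_2)$ coincide as subsets of $W$. The main ingredient is then the classical fact (implicit in Proposition~\ref{prop:ReflSubgroup}, originally due to Deodhar and Dyer) that the reflections of $(W, S)$ lying in a standard parabolic subgroup $W_J$ are precisely the reflections of the Coxeter system $(W_J, J)$. Applying this with $S = S_2$ and $J = J_2$: any $s \in J_1$ is, by the identification above, a reflection of $(W, S_2)$ lying in $H$, hence a reflection of $(H, J_2)$; that is, $s$ is $H$-conjugate to some element of $J_2$. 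Swapping $J_1$ and $J_2$ completes this case.

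The angle-compatibility inheritance is where I expect the main difficulty to lie. Given a spherical pair $\{s, t\} \subseteq J_1$, the angle-compatibility of $S_1, S_2$ supplies some $w \in W$ with $\{wsw^{-1}, wtw^{-1}\} \subseteq S_2$; unfortunately $w$ need not lie in $H$, and the conjugated pair need not lie in $J_2 = S_2 \cap H$. A natural avenue is to combine this with the parabolic-compatibility of $J_1, J_2$ just established: the rank-$2$ spherical subgroup $\langle s, t \rangle$ is $H$-conjugate to a standard rank-$2$ dihedral parabolic $\langle s'', t'' \rangle$ of $(H, J_2)$ with $\{s'', t''\} \subseteq J_2$, via some $h \in H$. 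The crux is then to arrange that $\{h s h^{-1}, h t h^{-1}\}$ is exactly $\{s'', t''\}$ and not merely a generating reflection pair of $\langle s'', t''\rangle$: a finite dihedral group typically admits several Coxeter generating pairs of reflections, so an arbitrary $h$ conjugating the \emph{subgroup} correctly need not send the distinguished \emph{pair} $\{s, t\}$ to the distinguished pair $\{s'', t''\}$. Resolving this would require adjusting $h$ by a suitable element of the normalizer $N_H(\langle s'', t'' \rangle)$, using the $W$-level conjugator $w$ as input, and this is where I anticipate invoking the combinatorial analysis of angle-compatibility carried out in \cite[Appendix~A]{TwistRigid}.
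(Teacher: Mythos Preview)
Your treatment of reflection- and parabolic-compatibility is essentially the paper's: both cases reduce to the observation (a special case of Lemma~\ref{lem:ParabHered}) that a parabolic subgroup of $(W, S_2)$ contained in $W_{J_2}$ is parabolic with respect to $(W_{J_2}, J_2)$. However, your parenthetical claim that the parabolic-compatibility conclusion holds unconditionally is wrong. To apply Lemma~\ref{lem:ParabHered} with ambient system $(W, S_2)$ and $Q = W_{J_2}$, you need $P = W_{K_1}$ to be parabolic \emph{with respect to $S_2$}; that is exactly what the parabolic-compatibility hypothesis on $S_1, S_2$ supplies, and it can fail without it. Concretely, take $W$ free Coxeter of rank~$3$ on $S_1 = \{a, b, c\}$, set $S_2 = \{a, b, bcb\}$, and let $J_1 = S_1$, $J_2 = S_2$. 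Then $W_{J_1} = W = W_{J_2}$, the sets $S_1, S_2$ are reflection-compatible, yet $\langle a, c\rangle$ is not $W$-conjugate to any standard parabolic of $(W, S_2)$ (the only candidates are $\langle a, b\rangle$, $\langle b, c\rangle$, and $\langle a, bcb\rangle$, the first two ruled out by Lemma~\ref{lem:Centra}(ii) and the last not even $S_1$-parabolic).

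For angle-compatibility the paper takes a route quite different from your normaliser-adjustment plan, and avoids any appeal to \cite{TwistRigid}. The argument is geometric, carried out in the chamber system of $(W, S_2)$. The angle-compatibility hypothesis says that for a spherical pair $\{s,t\} \subseteq J_1$ there is a chamber $c$ two of whose panels $\sigma, \tau$ lie in the walls fixed by $s$ and $t$. Inside the residue $R_0$ with stabiliser $W_{J_2}$, Lemma~\ref{lem:PcFinite} provides a rank-$2$ residue $R$ stabilised by $\langle s, t\rangle$; the combinatorial projections of $c, \sigma, \tau$ to $R$ yield a chamber $c' \in R \subset R_0$ with two panels still stabilised by $s$ and $t$ respectively. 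The element of $W_{J_2}$ sending $c'$ to the base chamber then conjugates $\{s, t\}$ setwise into $J_2$. This sidesteps entirely the dihedral ambiguity you identified: the projection automatically lands on adjacent panels, so no post-hoc correction by an element of $N_H(\langle s'', t''\rangle)$ is needed.
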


\begin{proof}
{We start with the following observation, which is a special case of Lemma~\ref{lem:ParabHered}:
  if $P, Q$ are parabolic subgroups of a Coxeter group $W$ and if $P$ is contained in $Q$,
then $P$ is also parabolic as a subgroup of the Coxeter group $Q$.

The above observation is already enough to draw the desired conclusion for reflection-compatibility and parabolic-compatibility.

Assume, now, that $S_1$ and $S_2$ are angle-compatible. This implies that in the  Cayley graph of $(W, S_2)$,
viewed as a chamber system, every spherical pair $\{s, t\} \subseteq S_1$ fixes two walls that contain two panels $\sigma, \tau$ of a
common chamber, say $c$. Given any residue $R$ stabilised by $P = \la s, t\ra$, consider the combinatorial projection of $c$, $\sigma$
and $\tau$ onto $R$, and call them $c'$, $\sigma'$ and $\tau'$. By properties of the projection (see \cite[2.30--2.32]{TitsLN}), $\sigma'$
and $\tau'$ must be panels stabilised by $s$ and $t$ respectively; moreover, they are both panels of the chamber $c'$.

We now fix a residue $R_0$,  whose stabiliser is $W_{J_1} = W_{J_2}$. If  the pair $\{s, t\}$ is contained in $J_1$,
then, by Lemma~\ref{lem:PcFinite}, we can find a rank two residue $R$ stabilised by $P$ within  $R_0$. By the preceding paragraph,
there is a chamber of $R$ containing two panels respectively stabilised by $s$ and $t$. Hence the element of $W_{J_1} = W_{J_2}$, that
maps this chamber to the base chamber $1$, conjugates the pair $\{s, t\} \subset J_1$ to a pair contained in $J_2$. This shows
that $J_1$ and $J_2$ are angle-compatible  as Coxeter
generating sets for the Coxeter subgroup $W_{J_1} = W_{J_2}$.
}
\end{proof}

\begin{lem}\label{lem:ParabCompatible}
Let $S_1, S_2$ be two Coxeter generating sets   for $W$.   If for every $J_1 \subseteq S_1$, there is some $J_2 \subseteq S_2$ such that the subgroup
$W_{J_1}$ is conjugate to $W_{J_2}$ {in $W$}, then for every $J_2 \subseteq S_2$, there is some $J_1 \subseteq S_1$ such that the subgroup
$W_{J_2}$ is conjugate to $W_{J_1}$  {in $W$}.
\end{lem}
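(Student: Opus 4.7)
My plan is to prove Lemma~\ref{lem:ParabCompatible} by induction on the rank $n = |S_1|$ of $W$. First, I observe that the hypothesis implies reflection-compatibility of $S_1$ and $S_2$: applied to each rank-one parabolic $\langle s\rangle$ with $s \in S_1$, it forces $s$ to be $W$-conjugate to some $s' \in S_2$. Since reflection-compatibility is a symmetric equivalence relation (cf.~\cite[App.~A]{TwistRigid}), Lemma~\ref{lem:Card} yields $|S_1| = |S_2| = n$. The base cases $n \leq 1$ being trivial, I may assume the inductive hypothesis at all ranks strictly smaller than $n$.

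For the inductive step, fix $J_2 \subseteq S_2$ with $J_2 \subsetneq S_2$ (else $W_{J_2} = W = W_{S_1}$) and set $Q = \Pc_{S_1}(W_{J_2})$. The hypothesis amounts to the inclusion of all $S_1$-parabolics into the $S_2$-parabolics of $W$, so $Q$ is simultaneously an $S_1$- and an $S_2$-parabolic of $W$; by Lemma~\ref{lem:ParabHered}, $W_{J_2}$ is parabolic in $Q$ with respect to the $S_2$-induced Coxeter generating set $T_2$ of $Q$, and $Q$ also carries an $S_1$-induced Coxeter generating set $T_1$. Suppose first that $Q \neq W$. Then $|T_1|, |T_2| < n$, and the hypothesis of the lemma holds trivially for the triple $(Q, T_1, T_2)$: any $T_1$-parabolic of $Q$ is an $S_1$-parabolic of $W$ contained in $Q$, which by our hypothesis is an $S_2$-parabolic of $W$, and thus a $T_2$-parabolic of $Q$ via Lemma~\ref{lem:ParabHered}. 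Applying the inductive hypothesis to $(Q, T_1, T_2)$, the $T_2$-parabolic $W_{J_2}$ of $Q$ is $Q$-conjugate to a $T_1$-parabolic of $Q$, which is exactly an $S_1$-parabolic of $W$.

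The remaining task, and the main obstacle, is to exclude the possibility $Q = W$ when $J_2 \subsetneq S_2$. If $J_2$ is spherical, Lemma~\ref{lem:PcFinite} bounds the $S_1$-rank of $Q$ by $|J_2| < n$ and settles the case; alternatively Lemma~\ref{lem:Autom} handles the spherical case directly, bypassing parabolic closures. For non-spherical $J_2$, I would first reduce to the case $J_2$ is $S_2$-irreducible via the decomposition $J_2 = J_2^{(1)} \sqcup \cdots \sqcup J_2^{(k)}$, using that the pairwise commuting direct factors $W_{J_2^{(i)}}$ should produce pairwise commuting $S_1$-parabolic conjugates, fitting together into a single standard $S_1$-parabolic conjugate to $W_{J_2}$. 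For $J_2$ irreducible non-spherical, I would set $w = \prod J_2$ and invoke Lemma~\ref{lem:CoxeterElement:Refl} together with reflection-compatibility (which identifies the sets of $S_1$- and $S_2$-reflection subgroups) to conclude that $W_{J_2}$ is the unique minimal reflection subgroup containing $w$, hence $Q = \Pc_{S_1}(w)$; after a further reduction to $W$ irreducible, infinite, and non-affine, the assumption $Q = W$ forces $w$ to be $S_1$-essential with $C_W(w)$ virtually cyclic by Lemma~\ref{lem:Krammer}, contradicting the inclusion $W_{J_2^\perp} \leqslant C_W(w)$ whenever $W_{J_2^\perp}$ is infinite. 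The residual subcases (affine $W$, or $J_2^\perp$ spherical) would require a more delicate argument involving the normalizer description of Lemma~\ref{lem:Centra}, and these constitute the most intricate ingredient of the proof.
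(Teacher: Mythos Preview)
Your inductive framework is sound through the point where you reduce to excluding $Q = \Pc_{S_1}(W_{J_2}) = W$ for proper $J_2$: the reflection-compatibility step, the observation that $Q$ is parabolic with respect to both $S_1$ and $S_2$, and the application of Lemma~\ref{lem:ParabHered} plus induction inside $Q$ when $Q \neq W$ are all correct. The spherical case of the obstruction is also fine via Lemma~\ref{lem:PcFinite}.

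The genuine gap is in the non-spherical case. Your reduction to irreducible $J_2$ is not justified: knowing that each irreducible component $W_{J_2^{(i)}}$ is conjugate to some $S_1$-parabolic does not let you assemble these into a single $S_1$-parabolic conjugate to $W_{J_2}$, and in any event this presupposes the result for the (possibly non-spherical) components. More seriously, for irreducible non-spherical $J_2$ your argument via Lemma~\ref{lem:Krammer} only yields a contradiction when $W$ is irreducible non-affine \emph{and} $W_{J_2^\perp}$ is infinite. But $J_2^\perp$ (taken in $S_2$) is typically empty or spherical --- for instance whenever $S_2$ is itself irreducible --- so the ``residual subcases'' you flag are in fact the generic situation, not a fringe. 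No use of Lemma~\ref{lem:Centra} will rescue this line, since that lemma describes normalisers and centralisers but gives no lower bound on the size of $C_W(w)$ when $J_2^\perp = \varnothing$.

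The paper's argument bypasses parabolic closures entirely. After reducing to $S_1$ irreducible, it observes that it suffices to treat \emph{maximal} proper $J_2 \subset S_2$, since any $W_{J_2}$ is an intersection of maximal standard $S_2$-parabolics and intersections of parabolics are parabolic. The key step is then a pigeonhole argument: there are exactly $n$ maximal proper subsets on each side, and (using $|J_1| = |J_2|$ whenever $W_{J_1}$ is conjugate to $W_{J_2}$, via Lemmas~\ref{lem:CompatHered} and~\ref{lem:Card}) the hypothesis gives a map from maximal $S_1$-parabolics to maximal $S_2$-parabolics. If some maximal $W_{J_2}$ is not hit, two distinct maximal $W_I$, $W_{I'}$ with $I, I' \subset S_1$ must be conjugate; by \cite{Deodhar} this forces $W$ finite, which is then handled by Lemma~\ref{lem:PcFinite}. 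This counting idea is the missing ingredient in your approach.
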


\begin{proof}
The hypothesis implies that $S_1$ and $S_2$ are reflection-compatible.
By Lemma~\ref{lem:Card}, we have $|S_1| = |S_2|$.

Note  that if $W_{J_1}$ is conjugate to
$W_{J_2}$ for some $J_1 \subseteq S_1$ and $J_2 \subseteq S_2$, then,
after replacing $S_2$ by some conjugate, we can assume that   $W_{J_1}=W_{J_2}$.
The assumptions together with Lemma~\ref{lem:CompatHered} imply that $J_1$ and $J_2$ are reflection-compatible within the Coxeter group  $W_{J_1}=W_{J_2}$.
Therefore  $J_1$ and $J_2$ have the same cardinality by Lemma~\ref{lem:Card}.

\medskip
{Assume,  {at first,} that $S_1$ is irreducible.}

Let $J_2 \subseteq S_2$. We need to find  some $J_1 \subseteq S_1$ such that  $W_{J_1}$ is conjugate
to $W_{J_2}$. Since any intersection of parabolic subgroups is again parabolic, it suffices to consider
the case when $J_2$ is a maximal proper subset of $S_2$. Suppose for a contradiction that for some such
maximal subset $J_2 \subset S_2$, the group $W_{J_2}$ is not parabolic with respect to $S_1$. We know from
the previous paragraph that for each proper maximal subset $I \subset S_1$, there is a proper maximal subset
$J \subseteq S_2$ such that  $W_{I}$ is conjugate to $W_{J}$. There are exactly $n$ proper maximal subsets
of $S_1$ (resp. $S_2$), where $n = |S_1| = |S_2|$. By assumption, $W_{J_2}$ is not conjugate to any $W_I$
with $I \subset S_1$. Therefore, there must be  two distinct proper maximal subsets $I, I' \subset S_1$ such
that $W_{I}$ and $W_{I'}$ are conjugate to the same group $W_J$ for some $J \subset S_2$. In particular $W_I$
and $W_{I'}$ are conjugate. Since $S_1$ is irreducible, this implies by  \cite{Deodhar} or \cite[\S 3.1]{Krammer}
that $S_1$ is spherical; in other words $W$ is finite. In particular $W_{J_2}$ is a finite subgroup generated by $n-1$
reflections. Lemma~\ref{lem:PcFinite} thus implies that there is some $J_1 \subset S_1$ of cardinality at most $n-1$
such that $W_{J_1}$ contains $wW_{J_2}w\inv$ for some $w \in W$. By hypothesis there is $I_2 \subseteq S_2$ such that
$W_{J_1} = g W_{I_2} g\inv$ for some $g \in W$. As observed above the sets $J_1$ and $I_2$ have the same cardinality,
which is at most $n-1$. Thus $W_{J_2}$ is conjugate to a subgroup of $W_{I_2}$. Since $W_{J_2}$ is a parabolic subgroup
of rank $n-1$, it cannot be contained in a parabolic subgroup of any smaller rank. Moreover, two parabolic subgroups of
the same rank must coincide if one is contained in the other. Thus $W_{J_2}$ must be conjugate to $W_{I_2}$, and hence
also to $W_{J_1}$. This is a contradiction.

\medskip
{Assume, finally, that $S_1$ is reducible. We shall finish the proof by induction on the number of irreducible components of $S_1$.
The base case of the induction, i.e., when $S_1$ is irreducible, has already been established.

Suppose that  $S_1$ is a disjoint union $S_1 = I_1 \cup I'_1$ of two non-empty commuting subsets. Then $W_{I_1}$ and $W_{I'_1}$ are normal in $W$.
By hypothesis, they are also standard parabolic subgroups with respect to $S_2$. Set $I_2 = S_2 \cap W_{I_1}$ and  $I'_2 = S_2 \cap W_{I'_1}$.
Since $W =  W_{I_1} \times W_{I'_1}$, we have $S_2 = I_2 \cup I'_2$ and the two sets $I_2$ and $ I'_2$ commute.

By Lemma~\ref{lem:CompatHered}, the Coxeter generating sets $I_1$ and $I_2$ (resp.  $I'_1$ and $I'_2$) are parabolic-compatible in the Coxeter subgroup $W_{I_1}$ (resp. $W_{I_2}$). By induction, for any subset $J_2 \subset I_2$ (resp. $J'_2 \subset I'_2$), we find some $w \in W_{I_1}$ (resp. $w' \in W_{I'_1}$) and some $J_1\subset I_1$ (resp. $J'_1 \subset I'_1$) such that $wW_{J_2}w\inv  =W_{J_1}$ (resp.$ w'W_{J'_2}(w')\inv  =W_{J'_1}$). Since $w$ (resp. $w'$) commutes with $W_{I'_1}$ (resp. $W_{I_1}$), it follows that $ww'$ conjugates $W_{J_2 \cup J'_2}$ onto $W_{J_1 \cup J'_1}$. This finishes the proof.}
\end{proof}

The goal of this section is to establish the following fact, {which will later be used to prove Theorem~\ref{thm:CoxInner} from the Introduction}.

\begin{prop}\label{prop:MainTech}
Let $W$ be a finitely generated Coxeter group, and $S_1, S_2$ be two Coxeter generating sets.

If $S_1, S_2$ are angle-compatible and parabolic-compatible, then there is some inner automorphism $\alpha \in \Inn(W)$ such that $\alpha(S_1) = S_2$.
\end{prop}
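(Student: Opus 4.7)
My plan is to proceed by induction on $n = |S_1|$, which equals $|S_2|$ by Lemma~\ref{lem:Card}. The base cases $n \leq 1$ are trivial, since $W$ is then either trivial or cyclic of order~$2$.

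For the inductive step, the first objective is to align, via an inner automorphism, a common maximal proper standard parabolic subgroup. Fix any $s_0 \in S_1$ and set $J_1 = S_1 \setminus \{s_0\}$. By parabolic-compatibility, there exist $u \in W$ and $J_2 \subseteq S_2$ with $u W_{J_1} u^{-1} = W_{J_2}$; since conjugate standard parabolic subgroups have equal rank, $|J_2| = n-1$, so $J_2 = S_2 \setminus \{t_0\}$ for some $t_0 \in S_2$. After replacing $S_1$ by $u^{-1} S_1 u$, we may assume $W_{J_1} = W_{J_2}$. By Lemma~\ref{lem:CompatHered}, the sets $J_1$ and $J_2$ are angle-compatible and parabolic-compatible as Coxeter generating sets of the Coxeter group $W_{J_1} = W_{J_2}$, and by the inductive hypothesis there exists $g \in W_{J_1}$ with $gJ_1g^{-1} = J_2$. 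Replacing $S_1$ by $gS_1g^{-1}$, we arrive at the configuration
\[
S_1 = J \cup \{s\}, \qquad S_2 = J \cup \{t\},
\]
where $J$ is a common maximal standard parabolic generating set and $s, t$ are reflections lying outside $W_J$.

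The second objective is to align $s$ with $t$ by a further inner automorphism. First, one reduces to the case where $S_1$ (equivalently $S_2$) is irreducible: if $S_1$ decomposes as $I \sqcup I'$ with commuting nonempty blocks, then $W = W_I \times W_{I'}$, and parabolic-compatibility together with Lemma~\ref{lem:ParabCompatible} forces a matching decomposition of $S_2$, after which the induction hypothesis applies independently to each factor. In the irreducible case, there exists $j \in J$ with $sj \ne js$. The key tool here is angle-compatibility: if some such $j$ forms a \emph{spherical} pair with $s$, then angle-compatibility produces $w \in W$ with $\{wsw^{-1}, wjw^{-1}\} \subseteq S_2$, and a case analysis (using that $s \notin W_J$, so $\langle s \rangle$ cannot be a parabolic subgroup of $(W,S_2)$ lying entirely within $W_J$) pins down $wsw^{-1} = t$; adjusting $w$ within the normalizer $N_W(W_J) = W_{J \cup J^\perp}$ (from Lemma~\ref{lem:Centra}(i)) then yields an element $h$ with $hS_1 h^{-1} = S_2$.

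The principal obstacle is the subcase in which $s$ attaches to $J$ only through edges labeled $\infty$ in the Coxeter diagram of $(W,S_1)$, so no immediate spherical pair is available to feed into angle-compatibility. To handle this I would exploit parabolic-compatibility more finely: apply it to each subset $\{s\} \cup J'$ for $J' \subsetneq J$ to locate the parabolic closure of $\{s\} \cup J'$ with respect to $S_2$ and use Lemma~\ref{lem:CoxeterElement:Refl} together with the rank comparisons afforded by Lemma~\ref{lem:PcFinite} and Lemma~\ref{lem:ParabHered} to conclude that the only consistent possibility is that $s$ is $N_W(W_J)$-conjugate to $t$. The technical heart of the argument is this rigidity of the attaching generator, and it is where the combined force of both compatibility hypotheses is essential.
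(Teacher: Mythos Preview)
Your inductive framework and the first reduction---aligning a common corank-one subset $J$ so that $S_1 = J \cup \{s\}$ and $S_2 = J \cup \{t\}$---are fine and match the paper. The difficulty is that your ``second objective'' does not go through as written.

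First, suppose you find $j \in J$ with $\{s,j\}$ spherical and $w \in W$ with $\{wsw^{-1}, wjw^{-1}\} \subseteq S_2 = J \cup \{t\}$. You claim a case analysis forces $wsw^{-1} = t$, but nothing prevents $wsw^{-1} \in J$: the fact that $s \notin W_J$ does not rule out $s$ being \emph{conjugate} to an element of $J$. Second, even granting $wsw^{-1} = t$, you need $w J w^{-1} = J$ to conclude $wS_1 w^{-1} = S_2$, and angle-compatibility applied to a single pair gives no control over the rest of $J$; your appeal to $N_W(W_J)$ via Lemma~\ref{lem:Centra}(i) requires $J$ to be irreducible non-spherical, which fails precisely because you chose $s_0$ arbitrarily rather than so as to guarantee this. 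Finally, your ``principal obstacle'' paragraph is a wish rather than an argument: none of Lemmas~\ref{lem:CoxeterElement:Refl}, \ref{lem:PcFinite}, \ref{lem:ParabHered} produces the rigidity you need for the attaching generator.

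The paper avoids all of this by structuring the induction differently. It first reduces to $S_1$ irreducible, then disposes of the $2$-spherical case by citing an external rigidity result. In the remaining case it uses Lemma~\ref{lem:Induct} to choose $s_0$ so that $S' = S_1 \setminus \{s_0\}$ is still \emph{irreducible and non-$2$-spherical}; after aligning $S'$, it then (when $|S'|>2$) removes a \emph{second} carefully chosen element $s_1$ to get $S'' = S' \setminus \{s_1\}$ again irreducible non-spherical, and analyzes $(S'')^\perp$ via Lemma~\ref{lem:Centra} to force $s_0 = s'_0$. The case $|S'|=2$ is handled by an explicit Bass--Serre tree argument. The moral is that the leverage comes not from angle-compatibility of one pair, but from the normalizer/centralizer structure of a well-chosen irreducible non-spherical $S''$ sitting inside both $S_1$ and $S_2$.
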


The condition that $S_1 $ and $S_2$ are parabolic-compatible is not sufficient on its own to guarantee that they are conjugate; examples
illustrating this matter of fact may be found amongst finite dihedral groups.

{
A subset of a Coxeter generating set is called \textbf{$2$-spherical} if every pair of elements in it is spherical.  We shall need the following elementary fact.

\begin{lem}\label{lem:Induct}
Let $W$ be a Coxeter group with Coxeter generating set $S$. Let $J \subseteq S$ be a subset that is
irreducible, but not $2$-spherical. If $|J|>2$, then   there is some $s \in J$ such that $J \setminus \{s\}$ is still irreducible and non-$2$-spherical.
\end{lem}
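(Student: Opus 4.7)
The plan is to restate the lemma purely in terms of the \emph{non-commutation graph} $G = G(J)$, whose vertex set is $J$ and whose edges are the pairs $\{s,t\} \subseteq J$ with $m_{st} \neq 2$; we record on each such edge the label $m_{st} \in \{3,4,\dots,\infty\}$. Irreducibility of $J$ is equivalent to connectedness of $G$, while $J$ fails to be $2$-spherical iff $G$ contains at least one edge of label $\infty$ (an ``$\infty$-edge''). Thus the lemma reduces to the purely graph-theoretic statement: if $G$ is connected, $|V(G)| > 2$, and $G$ has at least one $\infty$-edge, then some vertex $s$ can be deleted so that $G - s$ is connected and still contains an $\infty$-edge.

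I would first invoke the standard fact that any connected graph with at least two vertices has at least two non-cut vertices (for instance, the two endpoints of a longest path). If some non-cut vertex $s$ of $G$ is not incident to every $\infty$-edge, then $s$ is the sought vertex: $G - s$ is connected and retains an $\infty$-edge. So we may assume every non-cut vertex is incident to every $\infty$-edge. Because an edge has only two endpoints, this forces $G$ to have exactly two non-cut vertices $s_1, s_2$, and to have a unique $\infty$-edge, namely the edge $s_1 s_2$ (in particular $s_1$ and $s_2$ must be adjacent in $G$).

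The remaining task, which I view as the only genuine point, is to rule out this configuration when $|J| > 2$. I would argue via the block decomposition of $G$: each end block of the block tree contributes at least one non-cut vertex of $G$, so having only two non-cut vertices forces the block tree to be a path with exactly two end blocks. Each end block must then be a single edge (else it would contribute more than one non-cut vertex), and each intermediate block must also be a single edge, because all of its vertices are cut vertices of $G$ and are therefore shared with an adjacent block—at most two such sharings are available. Consequently $G$ is itself a path $s_1 - v_1 - \cdots - v_{|J|-2} - s_2$; since $|J| \geq 3$, the endpoints $s_1$ and $s_2$ are not adjacent in $G$, contradicting the conclusion of the previous paragraph that $s_1 s_2$ is an edge. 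Hence the exceptional case cannot occur, and the desired $s \in J$ exists.
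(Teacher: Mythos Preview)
Your proof is correct, but it takes a genuinely different route from the paper's.

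The paper argues \emph{bottom-up}: start with a non-spherical pair $I_0 \subset J$ (which exists since $J$ is not $2$-spherical), and use irreducibility of $J$ to repeatedly adjoin one element at a time so that each $I_k$ remains irreducible. This produces a chain $I_0 \subsetneq I_1 \subsetneq \cdots \subsetneq I_{|J|-2} = J$ in which every term contains $I_0$ and is therefore non-$2$-spherical; the penultimate term $I_{|J|-3}$ is the desired $J \setminus \{s\}$. In your graph-theoretic language this is nothing more than growing a connected subgraph from the chosen $\infty$-edge until it exhausts $J$, and then removing the last vertex added.

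Your argument instead works \emph{top-down}: you look directly for a non-cut vertex avoiding all $\infty$-edges, and when none exists you invoke the block--cut tree to force $G$ to be a path, obtaining a contradiction. This is sound, but it brings in more machinery (block decomposition, counting non-cut vertices in leaf blocks) than is really needed. The paper's approach is shorter and requires only the trivial fact that a connected graph can be grown one vertex at a time from any chosen connected subgraph; it also yields the slightly stronger conclusion that one can peel off vertices all the way down to a prescribed $\infty$-edge, which is not immediate from your argument.
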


\begin{proof}
Start with a pair of elements $I_0 \subset J$ that generates an infinite dihedral group.
Since $J$ is irreducible, the pair $I_0$ must be contained in a triple $I_1 \subset J$
which is still an irreducible subset. Proceeding inductively, we construct a chain
$I_0 \subsetneq I_1 \subsetneq \dots$ such that $|I_n| = n+2$ and each $I_i$ is irreducible and not $2$-spherical. The result follows.
\end{proof}
}

\begin{proof}[Proof of Proposition~\ref{prop:MainTech}]

By Lemma~\ref{lem:Card}, we have $n = |S_1| = |S_2|$. We proceed by induction on $n$, the base case $n=1$ being trivial. 

Given an irreducible component $J_1 \subseteq S_1$, there is $J_2 \subseteq S_2$ such that $W_{J_1}$ is conjugate to $W_{J_2}$.
Since $J_1$ is an irreducible component, the parabolic group $W_{J_1}$ is normal in $W$, and we infer that $W_{J_1} = W_{J_2}$.
Moreover $J_1$ and $J_2$ are angle-compatible and parabolic-compatible by Lemma~\ref{lem:CompatHered}.
 By induction, we may therefore assume henceforth that $S_1$ and $S_2$ are both irreducible.

Assume that $S_1$ is {$2$-spherical}.
It follows from \cite[Prop.~11.7]{CM07}
 that a Coxeter generating set for $W$ is conjugate to $S_1$ if and only if it is angle-compatible with $S_1$. Thus we are done in this case.

We assume henceforth that  {neither $S_1$ nor $S_2$ are $2$-spherical}.
We can moreover assume that $n  > 2$, since otherwise $W$ would be
infinite dihedral, in which case the desired result is trivial since any two Coxeter generating sets are conjugate in that case.

Since $S_1$ is irreducible but not $2$-spherical,  it contains an element $s_0$ such that $S'_1 = S_1 \setminus \{s_0\}$ is still
irreducible and not $2$-spherical by Lemma~\ref{lem:Induct}.

Since $W_{S'_1}$ is conjugate to $W_{S'_2}$ for some $S'_2 \subseteq S_2$, we may assume, after replacing $S_2$ by a
conjugate,  that $W_{S'_1} = W_{S'_2}$. By Lemma~\ref{lem:CompatHered}, the sets $S'_1$ and $S'_2$ are angle-compatible
and parabolic-compatible in the Coxeter group $W_{S'_1}$. Therefore, the induction hypothesis implies that $S'_1$ and
$S'_2$ are conjugate. After replacing $S_2$ by a conjugate, we may thus assume that $S'_1 = S'_2$. We set $S' = S'_1 = S'_2$
and denote by $s'_0$ the unique element of $S_2 \setminus S'$.

Now we distinguish two main cases.

Assume first that $S'$ has exactly two elements, say $S' = \{s_1, s_2\}$. In that case $n = 3$ and we will conclude by
analyzing successively the few possible situations as follows.

If no pair in $S_1$ is spherical, then the Cayley graph of $W$ is the trivalent tree $T$. The hypotheses imply that $s_i s'_0$
is a translation of length $2$ for $i = 1, 2$.
{Moreover, so is the product $s_1 s_2$. It follows that the three reflections $s_1, s_2, s'_0$ fix three edges of $T$ that are mutually at distance
$1$ from one another. Thus these three edges have a common vertex, which must \emph{a fortiori} be the common vertex between the edges fixed by
$s_1$ and $s_2$. Therefore  $s_0 = s'_0$, and we are done.

If $\{s_0, s_1\}$ is spherical and $\{s_0, s_2\}$ is not, then  $W$ is the free product of a finite dihedral group $\la s_0, s_1\ra$ and a
cyclic group of order $2$ generated by $s_2$.  {This decomposition of $W$ as a free product must  also be visible with respect to the Coxeter generating set $S_2$. Indeed,  the pair $\{s'_0, s_2\}$ is non-spherical
({because $\la s_2 \ra$ is a free factor of $W$ of order $2$}). Therefore the pair $\{s'_0 , s_1\}$ must be spherical, {by angle-compatibility of $S_1$ and $S_2$}.
}


We next remark that $\la s_0 , s_1 \ra$ is  {the unique maximal finite subgroup of $W$ containing $s_1$}.  Since $\la s'_0, s_1\ra$ is
such a finite subgroup, we must have $s'_0 \in \la s_0, s_1\ra$. Now, the property that $s'_0 s_2$ is a
translation of length $2$ in the Cayley graph of $(W, S_1)$ forces $s'_0 = s_0$, as desired.

If  $\{s_0, s_1\}$ and $\{s_0, s_2\}$ are both spherical, then $W$ splits as the amalgamated product $W=\langle s_1,s_0 \rangle*_{\langle s_0\rangle} \langle s_0,s_2 \rangle$.
We claim that  $W$ contains a unique non-trivial element $x=s_0$ such that both of the subgroups $\langle x,s_1\rangle$ and $\langle x,s_2\rangle$ are finite.

{Since the pairs   $\{s_1,s_0'\}$ and $\{s_2,s_0'\}$ are both spherical (otherwise $W$ would have $\la s_1\ra$ or $\la s_2 \ra$ as a free factor of order two, which is impossible since $\{s_0, s_1\}$ and $\{s_0, s_2\}$ are both spherical), that} claim readily implies
that $s_0 = s'_0$, which concludes the proof in the special case at hand.


{The claim can be established as follows.} Since $W$ is an amalgamated product,
it acts on the associated Bass-Serre tree $T$. Suppose that $x \in W$ is an element such that both of the subgroups $\langle x,s_1\rangle$ and
$\langle x,s_2\rangle$ are finite. A finite group acting on  a tree always fixes some vertex (see \cite[Example~I.6.3.1]{Serre}), hence
there are vertices $u_1,u_2$ of $T$ such that $u_1 \in \Fix(x)\cap \Fix(s_1)$ and $u_2 \in \Fix(x) \cap \Fix(s_2)$, where $\Fix(x)$
denotes the set of vertices of $T$ fixed by $x$. Note that $\Fix(s_1)$ and $\Fix(s_2)$ are two convex subsets of $T$ with empty intersection,
because vertex stabilisers (for the action of $W$ on $T$) are finite and the pair $\{s_1,s_2\}$ is not spherical by the assumptions.
Therefore there is a unique edge $e$ with $e_- \in \Fix(s_1)$ and $e_+ \in \Fix(s_2)$. The stabiliser of $e$ in $W$ is the subgroup $\langle s_0 \rangle$ and
any arc in $T$ connecting a vertex of $\Fix(s_1)$ with a vertex $\Fix(s_2)$ must pass through $e$. Since $x$ fixes one of such arcs $[u_1,u_2]$, we deduce that
$x$ fixes $e$. As $\langle s_0 \rangle$ contains only one non-trivial element, we can conclude that $x=s_0$, thereby proving the claim.}

Assume now that $S'$ has more than two elements.
{By Lemma~\ref{lem:Induct}, there is some $s_1 \in S'$ such that $S'' = S_1 \setminus \{s_1\}$ is still irreducible and  not $2$-spherical.}
Since $S'$ is irreducible,
it follows that $(S'')^\perp \subseteq \{s_0\}$, where for a subset $J \subseteq S$,
$J ^\perp$ denotes the set of those $s \in S \setminus J$ commuting with $J$  {in $W$}.

If $(S'')^\perp = \{s_0\}$, then the centraliser of $S''$ in $W$ is $\la s_0 \ra$ by Lemma~\ref{lem:Centra}(i).
Applying the same lemma with respect to the Coxeter generating set $S_2$ then yields that the centraliser
of $S''$ is $\la s'_0 \ra$. Therefore  $s_0 = s'_0$   and we are done in this case.

If $(S'')^\perp =\varnothing$, then $W_{S'}$ and $W_{S'' \cup \{s_0\}}$ are the only two proper
parabolic subgroups of $W$ (with respect to the Coxeter generating set $S_1$) containing $W_{S''}$
properly, by Lemma~\ref{lem:Centra}(iii). Since $S_1$ and $S_2$ are parabolic-compatible, we infer
that $W_{S'' \cup \{s_0\}} = W_{S'' \cup \{s'_0\}}$.

By Lemma~\ref{lem:CompatHered}, the sets $S'' \cup \{s_0\}$ and $S'' \cup \{s'_0\}$ are angle- and parabolic-compatible.
Thus by induction there is some $w \in W_{S'' \cup \{s_0\}}$ such that $wS'' w\inv \cup\{w s'_0 w\inv\} = S'' \cup \{s_0\}$.
{Since $S''$ is irreducible non-spherical, it follows from Lemma~\ref{lem:Centra}(ii) that $w S'' w\inv = S''$, which implies that $w s'_0 w\inv = s_0$.
Moreover, since   $w$ normalizes $W_{S''}$ and since $(S'')^\perp =\varnothing$, we infer from Lemma~\ref{lem:Centra}(i) that
$w$ must be trivial. Hence $s_0 = s'_0$ and we are done. }
\end{proof}

{
\begin{cor}\label{cor:Inner:intro}
Let $W$ be a finitely generated Coxeter group with Coxeter generating set $S$ and let $\alpha \in \Aut(W)$.
Then $\alpha $ is inner-by-graph if and only if $\alpha(S)$ is angle-compatible with $S$, and $\alpha$ maps every parabolic subgroup to a parabolic subgroup.
\end{cor}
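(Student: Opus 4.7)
The plan is to derive both implications from Proposition~\ref{prop:MainTech}, applied to the two Coxeter generating sets $S_1 := S$ and $S_2 := \alpha(S)$.

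For the ``only if'' direction, I would write $\alpha(x) = g \sigma(x) g^{-1}$ for some $g \in W$ and some graph automorphism $\sigma$ stabilising $S$ setwise, so that $\alpha(S) = gSg^{-1}$. Angle-compatibility is then immediate: every $s \in S$ is conjugate to $gsg^{-1} \in \alpha(S)$, so reflection-compatibility holds, and for each spherical pair $\{s,t\} \subseteq S$ the element $w := g$ satisfies $\{gsg^{-1}, gtg^{-1}\} \subseteq \alpha(S)$. Moreover, for any parabolic subgroup $P = hW_Jh^{-1}$ of $W$, the image $\alpha(P) = g\sigma(h) W_{\sigma(J)} \sigma(h)^{-1} g^{-1}$ is a conjugate of the standard parabolic $W_{\sigma(J)}$, and is therefore parabolic.

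For the converse, the key step is to deduce parabolic-compatibility of $S_1$ and $S_2$ from the hypothesis that $\alpha$ maps every parabolic subgroup to a parabolic subgroup. Since $|S| < \infty$, there are only finitely many conjugacy classes of parabolic subgroups of $(W, S)$; the hypothesis says that $\alpha$ induces a self-map on this finite set, and this self-map is injective (as $\alpha$ is an automorphism of $W$, so two parabolic subgroups with conjugate images are already conjugate), hence also surjective. Consequently, for each $J_1 \subseteq S_1$ there exist $K \subseteq S_1$ and $h \in W$ with $\alpha(W_K) = h W_{J_1} h^{-1}$. But $\alpha(W_K) = \langle \alpha(K) \rangle$ is a standard parabolic subgroup relative to $S_2$, so $W_{J_1}$ is conjugate in $W$ to $W_{\alpha(K)}$ with $\alpha(K) \subseteq S_2$. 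The reverse condition of parabolic-compatibility then follows by Lemma~\ref{lem:ParabCompatible}.

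With angle-compatibility (from the hypothesis) and parabolic-compatibility (just established) both verified, Proposition~\ref{prop:MainTech} produces $g \in W$ with $gSg^{-1} = \alpha(S)$. Defining $\sigma : x \mapsto g^{-1}\alpha(x) g$ yields an automorphism stabilising $S$ setwise, i.e.\ a graph automorphism, and the identity $\alpha(x) = g\sigma(x) g^{-1}$ shows that $\alpha$ is inner-by-graph. The only genuinely non-formal step in the whole argument is the finiteness-plus-injectivity observation used to obtain surjectivity of $\alpha$ on conjugacy classes of parabolic subgroups, which is what upgrades the one-sided hypothesis ``$\alpha$ sends parabolics to parabolics'' to the two-sided parabolic-compatibility required by Proposition~\ref{prop:MainTech}; the remainder is bookkeeping on top of that proposition.
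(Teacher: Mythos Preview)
Your proof is correct and follows the same strategy as the paper, which simply records that sufficiency follows by applying Proposition~\ref{prop:MainTech} to $S_1 = S$ and $S_2 = \alpha(S)$. Your pigeonhole argument for parabolic-compatibility is valid, but a shorter route bypasses it: for each $J_2 = \alpha(K) \subseteq S_2$ the group $\langle J_2 \rangle = \alpha(W_K)$ is by hypothesis an $S$-parabolic, which already gives one direction of compatibility, and Lemma~\ref{lem:ParabCompatible} (with the roles of $S_1$ and $S_2$ interchanged) then supplies the other---so your closing invocation of that lemma is actually superfluous, since your surjectivity argument has already established the defining direction directly.
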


\begin{proof}
{The necessity is obvious. And the sufficiency} follows by applying Proposition~\ref{prop:MainTech} to the Coxeter generating sets $S_1 = S$ and $S_2 = \alpha(S)$.
\end{proof}

}

We {also} deduce the following criterion ensuring that an automorphism is inner.

\begin{cor}\label{cor:Inner}
Let $W$ be a finitely generated Coxeter group with Coxeter generating set $S$ and let $\alpha \in \Aut(W)$.
Then $\alpha $ is inner if and only if $\alpha(S)$ is angle-compatible with $S$, and $\alpha$ maps every parabolic subgroup to a conjugate of itself.
\end{cor}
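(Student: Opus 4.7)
Necessity is immediate: if $\alpha = \iota_g$ for some $g \in W$, every parabolic subgroup $P$ is mapped to $gPg^{-1}$, a conjugate of itself, and $\alpha(S) = gSg^{-1}$ is angle-compatible with $S$.

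For sufficiency, the strengthened hypothesis in particular ensures that $\alpha$ maps every parabolic to a parabolic. Combined with angle-compatibility, Corollary~\ref{cor:Inner:intro} yields a decomposition $\alpha = \iota_w \circ \sigma$ for some $w \in W$ and graph automorphism $\sigma$ with $\sigma(S) = S$. Because inner automorphisms preserve conjugacy classes of subgroups, $\sigma$ inherits the hypothesis: $W_{\sigma(J)} = \sigma(W_J)$ is $W$-conjugate to $W_J$ for every $J \subseteq S$. It therefore suffices to show that $\sigma$ is inner.

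The plan is to verify the hypothesis of Corollary~\ref{cor:SmallWords} for $\sigma$: namely, that $\sigma(u) \sim u$ in $W$ whenever $u = s_{i_1}s_{i_2}\cdots s_{i_k}$ is a product of pairwise distinct elements of $S$. Setting $J = \{s_{i_1}, \ldots, s_{i_k}\}$, Lemma~\ref{lem:CoxeterElement} gives $\Pc(u) = W_J$. Since $\sigma$ permutes $S$, the element $\sigma(u)$ is also a product of pairwise distinct generators, hence $\Pc(\sigma(u)) = W_{\sigma(J)}$. By hypothesis $W_{\sigma(J)} = gW_Jg^{-1}$ for some $g \in W$, so $v := g^{-1}\sigma(u)g$ lies in $W_J$ with $\Pc(v) = W_J$; transporting Lemma~\ref{lem:CoxeterElement:Refl} via $g^{-1}$ shows that $W_J$ is the unique minimal reflection subgroup of $W$ containing $v$.

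The hard part will be showing that $u$ and $v$ are conjugate in $W$. I would reduce to $J$ irreducible by decomposing $W_J$ into its direct factors. For irreducible non-spherical $J$, conjugate standard parabolics must coincide (a classical rigidity fact, extractable from Lemma~\ref{lem:Centra}(ii) together with a chamber-system argument analogous to the one used in the proof of Lemma~\ref{lem:ParabCompatible}), so $\sigma(J) = J$ and then $\sigma(u)$ is another Coxeter element of the Coxeter system $(W_J, J)$, hence conjugate to $u$ in $W_J$ by the classical conjugacy of Coxeter elements in irreducible Coxeter systems. For irreducible spherical $J$, one invokes the rigidity of Coxeter generating sets of finite irreducible Coxeter groups and the classification of conjugacy classes of elements of reflection length equal to rank. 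Once $\sigma(u) \sim u$ has been established in all cases, Corollary~\ref{cor:SmallWords} delivers that $\sigma$, and therefore $\alpha$, is inner.
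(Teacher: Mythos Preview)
Your argument is circular: Corollary~\ref{cor:SmallWords} is proved in the paper \emph{using} Corollary~\ref{cor:Inner} (its proof ends with ``Thus all the hypotheses of Corollary~\ref{cor:Inner} are satisfied''), so you cannot invoke it here.

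Even setting aside the circularity, the key step you label ``the hard part'' has a genuine gap. Your claim of a ``classical conjugacy of Coxeter elements in irreducible Coxeter systems'' is false for infinite systems: in the free Coxeter group $\langle s_1\rangle * \langle s_2\rangle * \langle s_3\rangle$ (which is irreducible and non-spherical), the elements $s_1 s_2 s_3$ and $s_2 s_1 s_3$ are not conjugate, as one checks from the conjugacy criterion in free products. Of course, in your situation $\sigma$ also satisfies the parabolic-conjugacy hypothesis, which rules out such examples; but your argument at that point never uses this hypothesis --- you simply assert that any reordering of $J$ gives a conjugate Coxeter element. The spherical case is likewise only gestured at: once $\sigma(J)\neq J$ is possible, you need to know that the transported generating set $g^{-1}\sigma(J)g$ of $W_J$ is angle-compatible (not just reflection-compatible) with $J$, and you have not established this.

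The paper takes a shorter and self-contained route. After reducing to a graph automorphism $\sigma$ with $\sigma(S)=S$ via Proposition~\ref{prop:MainTech}, it applies Lemma~\ref{lem:Graph}, which argues directly that such a $\sigma$ preserving parabolic conjugacy classes must be inner. The proof of Lemma~\ref{lem:Graph} does not pass through Coxeter elements at all: for non-spherical irreducible $S$ it uses Deodhar's rigidity (two conjugate irreducible non-spherical standard parabolics coincide) to pin down $\sigma$ on larger and larger subsets of $S$ until $\sigma|_S$ is the identity; for spherical irreducible $S$ it is a short case check against the list of finite Coxeter groups admitting non-trivial diagram automorphisms.
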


\begin{proof}
The necessity is trivial. For the sufficiency, suppose that
$S$ and $\alpha(S)$ are two Coxeter generating sets which are reflection-compatible, angle-compatible and parabolic-compatible.
Proposition~\ref{prop:MainTech} ensures that, after replacing $\alpha$ by some appropriate element from the coset $\alpha \Inn(W)$,
we may assume that $\alpha(S) = S$. It then follows from Lemma~\ref{lem:Graph} below that $\alpha $ is inner.
\end{proof}

{

\begin{lem}\label{lem:Graph}
Let $W$ be a finitely generated Coxeter group with Coxeter generating set $S$ and $\alpha \in \Aut(W)$ an automorphism such that $\alpha(S) = S$.
If $\alpha$ maps every parabolic subgroup of $W$ to a conjugate parabolic subgroup, then $\alpha$ is inner.
\end{lem}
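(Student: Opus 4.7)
The plan is to argue by induction on the rank $n = |S|$, the case $n \leq 1$ being trivial. First I would reduce to the case where the Coxeter diagram of $(W, S)$ is connected. Suppose $S = S_1 \sqcup S_2$ with $S_1, S_2$ commuting. Then $W_{S_1}$ and $W_{S_2}$ are direct factors, hence normal subgroups, and the hypothesis implies that $\alpha(W_{S_i})$ is conjugate to $W_{S_i}$. Since a normal subgroup is the unique element of its conjugacy class, we have $\alpha(W_{S_i}) = W_{S_i}$; the basic fact $W_{S_i} \cap S = S_i$ then gives $\alpha(S_i) = S_i$. The restriction of $\alpha$ to $W_{S_i}$ is a graph automorphism of this parabolic subgroup still satisfying the parabolic-preserving hypothesis, so by induction these restrictions are inner. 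Combining the conjugating elements from the two factors yields the desired inner realization of $\alpha$.

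Assuming $W$ is irreducible, I would write $\beta = \alpha|_S$ and aim to produce $g \in W$ with $gsg\inv = \beta(s)$ for all $s \in S$. The key technical step is to show that $\beta(J) = J$ for every irreducible non-spherical subset $J \subseteq S$. To see this, $W_{\beta(J)}$ being conjugate to $W_J$ provides $g \in W$ with $g W_J g\inv = W_{\beta(J)}$; hence $gJg\inv$ is a Coxeter generating set for $W_{\beta(J)}$ with the same Coxeter diagram as $(W_J, J)$. Appealing to the rigidity of Coxeter generating sets in irreducible non-spherical Coxeter groups (any such is $W_{\beta(J)}$-conjugate to $\beta(J)$), one may modify $g$ to ensure $gJg\inv = \beta(J) \subseteq S$; then Lemma~\ref{lem:Centra}(ii) applied to $J$ in $W$ forces $gJg\inv = J$, whence $\beta(J) = J$.

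The main obstacle will then be to extract enough information about $\beta$ to establish that it is realized by conjugation by some element of $W$. When $S$ admits enough irreducible non-spherical proper subsets to separate its elements, the preceding step already forces $\beta$ to be the identity on $S$, and we are done. The remaining cases are irreducible affine Coxeter groups (whose proper parabolic subgroups are all spherical) and finite irreducible Coxeter groups; in these, further input from the hypothesis on spherical parabolic subgroups is needed, particularly for the maximal parabolic subgroups $W_{S \setminus \{s\}}$, whose conjugacy classes encode the ``type'' of the corresponding vertex in the Coxeter complex. In the affine case this analysis pins down $\beta$ as the identity, while in the finite irreducible case the only nontrivial $\beta$ compatible with the hypothesis turns out to be the opposition involution, which is realized by conjugation by the longest element $w_0$ of $W$.
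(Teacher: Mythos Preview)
Your reduction to the irreducible case and the overall strategy of showing $\beta(J)=J$ for suitable $J\subseteq S$ are sound and match the paper's approach. However, the ``rigidity'' step contains a genuine gap: it is \emph{not} true that any two Coxeter generating sets of an irreducible non-spherical Coxeter group are conjugate. The simplest counterexamples are free Coxeter groups of rank $\geq 3$ (all $m_{ij}=\infty$): these are irreducible and infinite, yet admit many pairwise non-conjugate Coxeter generating sets. So you cannot in general modify $g$ to achieve $gJg^{-1}=\beta(J)$ by appealing to rigidity of the subgroup $W_{\beta(J)}$.

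The fix is much shorter than your detour: since $W_J$ and $W_{\beta(J)}$ are conjugate standard parabolic subgroups with $J,\beta(J)\subseteq S$ both irreducible and non-spherical, a result of Deodhar (see \cite{Deodhar} or \cite[\S3.1]{Krammer}) gives $J=\beta(J)$ directly---no intermediate rigidity statement is needed. The paper in fact proves more at this point: the same circle of results shows $\beta(J)=J$ whenever $J$ is \emph{maximal spherical} in $S$, and this is what actually carries the argument when $W$ is irreducible affine. Your sketch glosses over that case (``this analysis pins down $\beta$ as the identity''), but there the only irreducible non-spherical subset is $S$ itself, so your key step yields nothing. The paper handles affine $W$ uniformly within the non-spherical case by taking $J\subseteq S$ minimal irreducible non-spherical (so $J=S$ when $W$ is affine) and observing that each $J\setminus\{s\}$ lies in a maximal spherical subset not containing $s$; that subset is $\beta$-fixed, whence $\beta(s)=s$.

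Your finite-case summary is also slightly off: it is not that the only compatible nontrivial $\beta$ is the opposition involution, but rather that in the types where $w_0$ is central ($F_4$, $I_2(2k)$, $D_{2k}$) one must argue that \emph{no} nontrivial graph automorphism satisfies the hypothesis. The paper does this by a short type-by-type check, using in the $D_{2k}$ case that the maximal irreducible proper standard parabolics are pairwise non-conjugate.
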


\begin{proof}
We first notice that  $\alpha$ preserves each irreducible component of $S$. There is thus no loss of generality in assuming that $S$ is irreducible.

Let $J \subseteq S$ be a subset.
We claim that if $J$ is irreducible and non-spherical (resp. maximal spherical), then $\alpha(J) = J$.
Indeed  $W_J$ is conjugate
to $\alpha(W_J) = W_{\alpha(J)}$ by hypothesis. By \cite{Deodhar}, two irreducible non-spherical (resp. maximal spherical) subsets of $S$ are
conjugate if and only if they coincide.
Thus $J = \alpha(J)$ and the claim stands proven.

Assume now that $S$ is non-spherical. Let $J \subseteq S$ be irreducible non-spherical and minimal with these properties.

Then $\alpha(J) = J$ by the claim above. Moreover, since $S$ is irreducible, we can order the elements of $S \setminus J$, say
$S \setminus J = \{t_1, \dots, t_k\}$, so that $J \cup \{t_1, \dots, t_i\}$
is irreducible (and non-spherical) for all $i$. Applying the claim to each of these sets, we deduce that $\alpha(t_i) = t_i$ for all $i < k$.

Since $J$ is minimal non-spherical, it follows that   for each $s \in J$, the subset $J_s = J \setminus \{s\}$ is contained in some  maximal spherical subset
of $S$ not containing $s$. Applying the claim to such a maximal spherical subset, we infer that $\alpha(J_s) = J_s$. Hence $\alpha(s)= s$.
Thus $\alpha$ acts trivially on $S$ and we are done in this case.

Assume finally that $S$ is spherical. The types of the irreducible finite Coxeter groups admitting a non-trivial graph automorphisms are:
$A_n$ ($n>1$), $D_n$ ($n>3$), $E_6$, $F_4$ and the dihedral groups $I_2(n)$. For types $A_n$ (with $n$ arbitrary),
$D_n$ (with $n$ odd), $E_6$ and $I_2(n)$ (with $n$ odd), the unique non-trivial graph automorphism is inner and realized by the longest element.
For type $F_4$ and $I_2(n)$ with $n$ even, the unique non-trivial graph automorphism swaps the two conjugacy classes of reflections.
Therefore {it does not preserve the conjugacy classes of parabolic subgroups of rank one.}
Finally, for $W$ of type $D_n$ with $n$ even and $\alpha \in \Aut(W)$ a non-trivial graph automorphism, we let $J \subset S$ be one of
the two maximal irreducible proper subsets which is not $\alpha$-invariant. Since $W_J$  is conjugate to $W_{\alpha(J)}$, \cite{Deodhar}
implies that $J = \alpha(J)$, a contradiction.
\end{proof}

\section{Pointwise inner automorphisms of Coxeter groups}
In this section we {{give the  proofs of}  Theorem~\ref{thm:CoxInner} and its corollaries. In fact, we obtain the following result, which is slightly more general than Theorem~\ref{thm:CoxInner}:

\begin{thm}\label{thm:ParabCompat}
Let $S$ and $S'$ be Coxeter generating sets of a  finitely generated Coxeter group $W$.

Then there is an inner automorphism $\alpha \in \Inn(W)$ such that $\alpha(S) = S'$ if and only if the following two conditions are satisfied:
\begin{enumerate}[(1)]
\item For each $J \subseteq S$, there is $J' \subseteq S'$ such that $W_J$ and $W_{J'}$ are conjugate.

\item For all $s, t \in S$ such that $st$ has finite order, there is a pair $s', t' \in S'$ such that $st$ is conjugate to $s't'$.
\end{enumerate}
\end{thm}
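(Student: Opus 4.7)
The forward implication is routine: any inner automorphism $\alpha$ with $\alpha(S) = S'$ sends each standard parabolic $W_J$ to the parabolic subgroup $W_{\alpha(J)} \subseteq W$ (now standard with respect to $S'$), which is conjugate to $W_J$ since $\alpha$ is inner, giving~(1); and $\alpha(st) = \alpha(s)\alpha(t)$ is a product of two elements of $S'$ and is conjugate in $W$ to $st$, giving~(2).

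For the converse, the strategy is to verify that $S$ and $S'$ are both parabolic-compatible and angle-compatible, so that Proposition~\ref{prop:MainTech} yields the desired inner automorphism $\alpha \in \Inn(W)$ with $\alpha(S) = S'$. Parabolic-compatibility is immediate from condition~(1) once symmetry is noted (Lemma~\ref{lem:ParabCompatible}). Moreover, applying~(1) to each singleton $J = \{s\}$, one sees that the cyclic group $\la s \ra$ of order~$2$ is conjugate in $W$ to some $W_{J'}$ of order~$2$; since a rank~$\geq 2$ Coxeter group has order at least $4$, this forces $|J'|=1$, so $s$ is conjugate in $W$ to an element of $S'$. Hence $S$ and $S'$ are also reflection-compatible.

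The substantive part is to deduce angle-compatibility on spherical pairs from condition~(2). Given $\{s, t\} \subseteq S$ with $st$ of finite order, condition~(2) supplies $s', t' \in S'$ and $w \in W$ with $w(st)w^{-1} = s't'$. Applying Lemma~\ref{lem:CoxeterElement:Refl} to both $st$ and $s't'$ (and using $w$ in one direction and $w^{-1}$ in the other to obtain the two inclusions) shows that $w \la s, t \ra w^{-1} = \la s', t' \ra$. Consequently, both $\{wsw^{-1}, wtw^{-1}\}$ and $\{s', t'\}$ are Coxeter generating pairs of the finite dihedral group $D := \la s', t' \ra$, sharing the common product $s't'$.

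It remains to exhibit $u \in D$ with $u\{wsw^{-1}, wtw^{-1}\}u^{-1} = \{s', t'\}$; setting $v = uw$ will then give $\{vsv^{-1}, vtv^{-1}\} = \{s', t'\} \subseteq S'$, establishing angle-compatibility. This reduces to a direct computation in the dihedral group. Parameterizing the $m$ reflections of $D$ as $r_i = s'(s't')^i$ for $i = 0, \dots, m-1$ (where $m$ is the order of $s't'$), the pairs with the same product as $\{s', t'\}$ are exactly those of the form $\{r_i, r_{i+1}\}$. Conjugation by $(s't')^k$ shifts the index $i$ by $-2k$, producing one orbit if $m$ is odd and two orbits if $m$ is even; in the latter case the two orbits are interchanged by conjugating by any single reflection in $D$, which negates the index. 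In all cases the action on such pairs is transitive, yielding the required $u \in D$. Once this elementary dihedral step is in place, Proposition~\ref{prop:MainTech} completes the proof. The main source of technical work lies in this last step, whose delicacy is caused by the case distinction on the parity of $m$.
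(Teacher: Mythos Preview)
Your proof is correct and follows essentially the same route as the paper: deduce parabolic-compatibility from~(1), angle-compatibility from~(2), and conclude via Proposition~\ref{prop:MainTech}. The only minor variation is that you obtain $w\langle s,t\rangle w^{-1} = \langle s',t'\rangle$ by applying Lemma~\ref{lem:CoxeterElement:Refl} symmetrically in both Coxeter systems (using reflection-compatibility), whereas the paper reaches the same equality via parabolic closures and Lemma~\ref{lem:Autom}; you also spell out the dihedral computation that the paper leaves as ``easy to verify''.
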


\begin{lem}\label{lem:AngleCompat}
Let $S, S'$ be reflection-compatible Coxeter generating sets for a Coxeter group $W$. Suppose that for each spherical pair $\{s, t\} \subseteq S$
there is a spherical pair $\{s', t'\} \subseteq S'$ such that $st$ is conjugate to $s't'$.

Then $S$ and $S'$ are angle-compatible.
\end{lem}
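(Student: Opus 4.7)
The plan is to reduce the problem to a computation inside a single finite dihedral subgroup of $W$, using Lemma~\ref{lem:CoxeterElement:Refl} to pin that subgroup down.

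\textbf{Step 1 (from conjugacy in $W$ to equality of dihedral subgroups).} Fix a spherical pair $\{s,t\} \subseteq S$. By hypothesis, choose a spherical pair $\{s',t'\} \subseteq S'$ together with $g \in W$ satisfying $st = g s' t' g^{-1}$. Applying Lemma~\ref{lem:CoxeterElement:Refl} to each of the Coxeter systems $(W,S)$ and $(W,S')$ yields that $\la s,t \ra$ (resp.\ $\la s',t' \ra$) is the unique minimal reflection subgroup of $W$ containing $st$ (resp.\ $s't'$); note that the notion of reflection subgroup is intrinsic to $W$, because reflection-compatibility of $S$ and $S'$ implies that the two sets of reflections coincide. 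Since inner automorphisms of $W$ take reflection subgroups to reflection subgroups and preserve inclusions, $g\la s',t' \ra g^{-1}$ is also the unique minimal reflection subgroup containing $g s' t' g^{-1} = st$. By uniqueness,
\[
D := \la s,t \ra = g \la s', t' \ra g^{-1}.
\]
Setting $a := g s' g^{-1}$ and $b := g t' g^{-1}$, I obtain a second Coxeter generating pair $\{a,b\}$ of the finite dihedral group $D$, with $ab = g s' t' g^{-1} = st$.

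\textbf{Step 2 (dihedral computation).} It now suffices to show: any two Coxeter generating pairs $\{s,t\}$ and $\{a,b\}$ of a finite dihedral group $D$ of order $2m$ with $st = ab$ are conjugate in $D$. Set $r := st$ (a generator of the rotation subgroup, of order $m$) and $a_k := r^k s$ for $k \in \Z/m\Z$; the $a_k$ exhaust the $m$ reflections of $D$. Using the dihedral relation $sr = r^{-1}s$ one checks $t = s^{-1} r = a_{-1}$, so $\{s,t\} = \{a_0, a_{-1}\}$; likewise $\{a,b\}$ takes the form $\{a_j, a_{j-1}\}$ for some $j \in \Z/m\Z$. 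A short calculation yields
\[
r^i \{a_0, a_{-1}\} r^{-i} = \{a_{2i}, a_{2i-1}\}, \qquad (r^i s) \{a_0, a_{-1}\} (r^i s)^{-1} = \{a_{2i}, a_{2i+1}\}.
\]
As $i$ varies over $\Z/m\Z$, these two families together contain $\{a_j, a_{j-1}\}$ for every $j$: if $m$ is odd then the first family alone exhausts all pairs, while if $m$ is even then the first family reaches the pairs with $j$ even and the second those with $j$ odd. So some element of $D$ conjugates $\{s,t\}$ to $\{a,b\}$.

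\textbf{Step 3 (conclusion).} Pick $d \in D$ with $d \{s,t\} d^{-1} = \{a,b\}$ and set $w := g^{-1} d \in W$. Then
\[
w \{s,t\} w^{-1} = g^{-1} d \{s,t\} d^{-1} g = g^{-1} \{a,b\} g = \{s', t'\} \subseteq S',
\]
which is the condition of angle-compatibility for $\{s,t\}$.

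The principal subtlety lies in Step~1: the \emph{uniqueness} assertion of Lemma~\ref{lem:CoxeterElement:Refl} is precisely what upgrades the conjugacy $st \sim s't'$ in $W$ to an equality of dihedral subgroups, which is needed to make the dihedral reduction meaningful. The calculation in Step~2 is elementary, and Step~3 is purely formal.
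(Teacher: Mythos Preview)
Your proof is correct and follows essentially the same strategy as the paper's: reduce to a finite dihedral subgroup via Lemma~\ref{lem:CoxeterElement:Refl}, then conjugate inside that subgroup. The one small difference is in Step~1: the paper identifies $W_{\{s,t\}}$ with $W_{\{s',t'\}}$ by combining Lemma~\ref{lem:CoxeterElement:Refl} with Lemma~\ref{lem:Autom} (showing each is parabolic with respect to the other generating set and invoking parabolic closures), whereas you use the \emph{uniqueness} of the minimal reflection subgroup from Lemma~\ref{lem:CoxeterElement:Refl} directly, together with the observation that reflection-compatibility forces the $S$- and $S'$-reflections to coincide. Your route is slightly more economical, bypassing Lemma~\ref{lem:Autom}; otherwise the arguments are the same.
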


\begin{proof}
{Let $\{s,t\}\subseteq S$ be a spherical pair}. After replacing $S'$ with a conjugate, we may assume that $st = s't'$. By Lemma~\ref{lem:CoxeterElement:Refl}, the parabolic closure of
$st = s't'$ with respect to $S$ (resp. $S'$) is the group $W_{\{s,t\}}$ (resp. $W_{\{s', t'\}}$). On the other hand, Lemma~\ref{lem:Autom} ensures
that $W_{\{s,t\}}$ is parabolic with respect to $S'$ and $W_{\{s', t'\}}$ is parabolic with respect to $S$. It follows that $W_{\{s,t\}} = W_{\{s', t'\}}$.

It is easy to verify that any Coxeter generating pair $s', t'$ of the finite dihedral group $W_{\{s,t\}}$ such that the rotations $st$ and $s't'$ coincide, must
be setwise conjugate to $\{s, t\}$ within the group $W_{\{s,t\}}$.  Therefore $S$ and $S'$ are angle-compatible, as desired.
\end{proof}

\begin{proof}[Proof of Theorem~\ref{thm:ParabCompat}]
That conditions (1) and (2) are necessary is clear. Assume that (1) and (2) hold. Thus $S$ and $S'$ are parabolic-compatible by (1). They are also  angle-compatible by (2), in view of Lemma~\ref{lem:AngleCompat}.
{Hence the conclusion follows from Proposition~\ref{prop:MainTech}.}
\end{proof}

\begin{proof}[Proof of Theorem~\ref{thm:CoxInner}]
{The claim is} immediate from Theorem~\ref{thm:ParabCompat} applied to the Coxeter generating sets $S$ and $S' = \alpha(S)$.
\end{proof}

\begin{proof}[Proof of Corollary~\ref{cor:cryst}]
In view of Theorem~\ref{thm:CoxInner}, it suffices to show that if $\alpha \in \Aut(W)$ satisfies (1) from that theorem, then it also satisfies (2). Given a spherical pair $\{s, t\} \subseteq S$, we know that
$\alpha(W_{\{s, t\}})$ is a spherical parabolic of rank two (by (1) and Lemma~\ref{lem:Autom}). {Thus, after replacing $\alpha$ with some automorphism from the same coset $\alpha Inn(W)$, we can suppose that
$\alpha(W_{\{s, t\}})=W_{\{s',t'\}}$ for some $\{s',t'\}\subseteq S$. Since $S$ is reflection-compatible with $\alpha(S)$, by the assumptions, Lemma \ref{lem:CompatHered} implies that the generating pairs
$\{s',t'\}$ and $\{\alpha(s),\alpha(t)\}$ are reflection-compatible in $W_{\{s',t'\}}$}. It remains to observe that in a finite dihedral group of order $4, 6, 8$ or $12$, any two
reflection-compatible Coxeter generating pairs are automatically angle-compatible. Thus the pairs $\{s, t\}$ and $\{\alpha(s), \alpha(t)\}$ are setwise conjugate, so that $S$ and $\alpha(S)$
are angle-compatible. In particular condition (2) holds, as desired.
\end{proof}

\begin{proof}[Proof of Corollary~\ref{cor:SmallWords}]
Let $S'=\alpha(S)$. Clearly $S'$ is a Coxeter generating set which is reflection-compatible with $S$. Moreover $S$ and $S'$ are angle-compatible by Lemma~\ref{lem:AngleCompat}.

We claim that $\alpha$ maps every parabolic subgroup to a conjugate of itself.
Indeed, let $J \subseteq S$ and let $x_J $ be the product of the elements of $J$ ordered arbitrarily. By hypothesis $\alpha(x_J)$ is conjugate to $x_J$.
Since $\alpha$ maps each reflection to a reflection, it maps a reflection subgroup to a reflection subgroup, and
it follows therefore from Lemma~\ref{lem:CoxeterElement:Refl} that $\alpha(W_{J})$ is conjugate to $W_{J}$.
Therefore $\alpha$ maps every standard  parabolic subgroup to some conjugate of itself.

Thus all the hypotheses of  {Corollary~\ref{cor:Inner}} are satisfied, thereby yielding the claim.
\end{proof}

}

}

\subsection*{Acknowledgements}
We are grateful to Piotr Przytycki for a careful reading of an earlier version of this note.

\begin{bibdiv}
\begin{biblist}
\bib{AB}{book}{
   author={Abramenko, Peter},
   author={Brown, Kenneth S.},
   title={Buildings},
   series={Graduate Texts in Mathematics},
   volume={248},
   note={Theory and applications},
   publisher={Springer},
   place={New York},
   date={2008},
   pages={xxii+747},
   }
\bib{AdamsBallmann}{article}{
   author={Adams, Scot},
   author={Ballmann, Werner},
   title={Amenable isometry groups of Hadamard spaces},
   journal={Math. Ann.},
   volume={312},
   date={1998},
   number={1},
   pages={183--195},
   }

\bib{Agol}{unpublished}{
   author={Agol, Ian},
   title={The virtual Haken conjecture},
   note={with and appendix by Ian Agol, Daniel Groves and Jason Manning. Preprint, 2012. \texttt{arXiv:1204.2810}},
}

\bib{Bourbaki}{book}{
   author={Bourbaki, Nicolas},
   title={\'El\'ements de math\'ematique. Fasc. XXXIV. Groupes et alg\`ebres
   de Lie. Chapitre IV: Groupes de Coxeter et syst\`emes de Tits. Chapitre
   V: Groupes engendr\'es par des r\'eflexions. Chapitre VI: syst\`emes de
   racines},
   language={French},
   series={Actualit\'es Scientifiques et Industrielles, No. 1337},
   publisher={Hermann},
   place={Paris},
   date={1968},
   pages={288 pp. (loose errata)},
}
\bib{CapraceFujiwara}{article}{
   author={Caprace, Pierre-Emmanuel},
   author={Fujiwara, Koji},
   title={Rank-one isometries of buildings and quasi-morphisms of Kac-Moody
   groups},
   journal={Geom. Funct. Anal.},
   volume={19},
   date={2010},
   number={5},
   pages={1296--1319},
}
\bib{CaMa}{unpublished}{
   author={Caprace, Pierre-Emmanuel},
   author={Marquis, Timoth\'ee},
   title={Open subgroups of locally compact Kac--Moody groups},
   note={Math. Z. (to appear)},
   date={2012},
}

\bib{CM07}{article}{
   author={Caprace, Pierre-Emmanuel},
   author={M{\"u}hlherr, Bernhard},
   title={Reflection rigidity of $2$-spherical Coxeter groups},
   journal={Proc. Lond. Math. Soc. (3)},
   volume={94},
   date={2007},
   number={2},
   pages={520--542},
}

\bib{Caprace-Muhlherr}{article}{
   author={Caprace, Pierre-Emmanuel},
   author={M{\"u}hlherr, Bernhard},
   title={Reflection triangles in Coxeter groups and biautomaticity},
   journal={J. Group Theory},
   volume={8},
   date={2005},
   number={4},
   pages={467--489},
}

\bib{TwistRigid}{article}{
   author={Caprace, Pierre-Emmanuel},
   author={Przytycki, Piotr},
   title={Twist-rigid Coxeter groups},
   journal={Geom. Topol.},
   volume={14},
   date={2010},
   number={4},
   pages={2243--2275},
}

\bib{C-Z_bianchi}{article}{
   author={Chagas, Sheila C.},
   author={Zalesskii, Pavel A.},
   title={Bianchi groups are conjugacy separable},
   journal={J. Pure Appl. Algebra},
   volume={214},
   date={2010},
   number={9},
   pages={1696--1700},
}

\bib{Chag-Zal}{article}{
   author={Chagas, Sheila C.},
   author={Zalesskii, Pavel A.},
   title={Finite index subgroups of conjugacy separable groups},
   journal={Forum Math.},
   volume={21},
   date={2009},
   number={2},
   pages={347--353},
}

\bib{Davis}{book}{
   author={Davis, Michael W.},
   title={The geometry and topology of Coxeter groups},
   series={London Mathematical Society Monographs Series},
   volume={32},
   publisher={Princeton University Press},
   place={Princeton, NJ},
   date={2008},
   pages={xvi+584},
}

\bib{Deodhar}{article}{
   author={Deodhar, Vinay V.},
   title={On the root system of a Coxeter group},
   journal={Comm. Algebra},
   volume={10},
   date={1982},
   number={6},
   pages={611--630}
}
\bib{DeoRefl}{article}{
   author={Deodhar, Vinay V.},
   title={A note on subgroups generated by reflections in Coxeter groups},
   journal={Arch. Math. (Basel)},
   volume={53},
   date={1989},
   number={6},
   pages={543--546},
}

\bib{J-Dyer}{article}{
   author={Dyer, Joan L.},
   title={Separating conjugates in free-by-finite groups},
   journal={J. London Math. Soc. (2)},
   volume={20},
   date={1979},
   number={2},
   pages={215--221},
}

\bib{Dyer}{article}{
   author={Dyer, Matthew},
   title={Reflection subgroups of Coxeter systems},
   journal={J. Algebra},
   volume={135},
   date={1990},
   number={1},
   pages={57--73},
   }
\bib{Form}{article}{
   author={Formanek, Edward},
   title={Conjugate separability in polycyclic groups},
   journal={J. Algebra},
   volume={42},
   date={1976},
   number={1},
   pages={1--10},
}

\bib{Goryaga}{article}{
   author={Goryaga, A. V.},
   title={Example of a finite extension of an FAC-group that is not an
   FAC-group},
   language={Russian},
   journal={Sibirsk. Mat. Zh.},
   volume={27},
   date={1986},
   number={3},
   pages={203--205, 225},
}

\bib{Grossman}{article}{
   author={Grossman, Edna K.},
   title={On the residual finiteness of certain mapping class groups},
   journal={J. London Math. Soc. (2)},
   volume={9},
   date={1974/75},
   pages={160--164},
}

\bib{Haglund-Wise_Cox}{article}{
   author={Haglund, Fr{\'e}d{\'e}ric},
   author={Wise, Daniel T.},
   title={Coxeter groups are virtually special},
   journal={Adv. Math.},
   volume={224},
   date={2010},
   number={5},
   pages={1890--1903},
}

\bib{Haglund-Wise_Special}{article}{
   author={Haglund, Fr{\'e}d{\'e}ric},
   author={Wise, Daniel T.},
   title={Special cube complexes},
   journal={Geom. Funct. Anal.},
   volume={17},
   date={2008},
   number={5},
   pages={1551--1620},
}


\bib{Humphreys}{book}{
   author={Humphreys, James E.},
   title={Reflection groups and Coxeter groups},
   series={Cambridge Studies in Advanced Mathematics},
   volume={29},
   publisher={Cambridge University Press},
   place={Cambridge},
   date={1990},
   pages={xii+204},
}

\bib{Krammer}{article}{
   author={Krammer, Daan},
   title={The conjugacy problem for Coxeter groups},
   journal={Groups Geom. Dyn.},
   volume={3},
   date={2009},
   number={1},
   pages={71--171},
}

\bib{Malc-cs}{article}{
   author={Malcev, Anatoly I.},
   title={On homomorphisms onto finite groups},
   language={Russian},
   journal={Uchen. Zap. Ivan. Gos. Ped. Inst.},
   volume={18},
   date={1958},
   pages={49--60},
}

\bib{Malcev}{article}{
   author={Malcev, Anatoly I.},
   title={On isomorphic matrix representations of infinite groups},
   language={Russian, with English summary},
   journal={Rec. Math. [Mat. Sbornik] N.S.},
   volume={8 (50)},
   date={1940},
   pages={405--422},
}

\bib{MM08}{article}{
   author={Marquis, Timoth{\'e}e},
   author={M{\"u}hlherr, Bernhard},
   title={Angle-deformations in Coxeter groups},
   journal={Algebr. Geom. Topol.},
   volume={8},
   date={2008},
   number={4},
   pages={2175--2208},
}

\bib{Martino}{article}{
   author={Martino, Armando},
   title={A proof that all Seifert 3-manifold groups and all virtual surface
   groups are conjugacy separable},
   journal={J. Algebra},
   volume={313},
   date={2007},
   number={2},
   pages={773--781},
}

\bib{M-M}{article}{
   author={Martino, Armando},
   author={Minasyan, Ashot},
   title={Conjugacy in normal subgroups of hyperbolic groups },
   journal={Forum Math.},
   volume={24},
   date={2012},
   number={5},
   pages={889--909},
}

\bib{Min-RAAG}{article}{
   author={Minasyan, Ashot},
   title={Hereditary conjugacy separability of right angled Artin groups and its applications},
   journal={Groups Geom. Dyn.},
   volume={6},
   date={2012},
   number={2},
   pages={335--388},
}

\bib{Most}{article}{
   author={Mostowski, A. W{\l}odzimierz},
   title={On the decidability of some problems in special classes of groups},
   journal={Fund. Math.},
   volume={59},
   date={1966},
   pages={123--135},
}

%

\bib{Niblo-Reeves}{article}{
   author={Niblo, Graham A.},
   author={Reeves, Lawrence D.},
   title={Coxeter groups act on ${\rm CAT}(0)$ cube complexes},
   journal={J. Group Theory},
   volume={6},
   date={2003},
   number={3},
   pages={399--413},
}

\bib{Rem}{article}{
   author={Remeslennikov, Vladimir N.},
   title={Conjugacy in polycyclic groups},
   language={Russian},
   journal={Algebra i Logika},
   volume={8},
   date={1969},
   pages={712--725},
}

\bib{Paris}{article}{
   author={Paris, Luis},
   title={Irreducible Coxeter groups},
   journal={Internat. J. Algebra Comput.},
   volume={17},
   date={2007},
   number={3},
   pages={427--447},
}


\bib{Segal}{book}{
   author={Segal, Daniel},
   title={Polycyclic groups},
   series={Cambridge Tracts in Mathematics},
   volume={82},
   publisher={Cambridge University Press},
   place={Cambridge},
   date={1983},
   pages={xiv+289},
}
\bib{Serre}{book}{
   author={Serre, Jean-Pierre},
   title={Trees},
   note={Translated from the French by John Stillwell},
   publisher={Springer-Verlag},
   place={Berlin},
   date={1980},
   pages={ix+142},
}
\bib{TitsLN}{book}{
   author={Tits, Jacques},
   title={Buildings of spherical type and finite BN-pairs},
   series={Lecture Notes in Mathematics, Vol. 386},
   publisher={Springer-Verlag},
   place={Berlin},
   date={1974},
   pages={x+299},
}
\bib{Weiss}{book}{
   author={Weiss, Richard M.},
   title={The structure of spherical buildings},
   publisher={Princeton University Press},
   place={Princeton, NJ},
   date={2003},
   pages={xiv+135},
}

\bib{Wise-qc_hierarchy}{unpublished}{
   author={Wise, Daniel T.},
   title={The Structure of Groups with a Quasiconvex Hierarchy},
   note={Preprint, 2012.},
}

\end{biblist}
\end{bibdiv}
\end{document}